\documentclass{amsart}
\usepackage{amsmath,amssymb,amsthm}
\usepackage{bm}
\usepackage{extarrows}
\usepackage{mathrsfs}
\usepackage{stmaryrd}
\usepackage[colorlinks=true]{hyperref}
\usepackage[all]{xy}

\newtheorem{theorem}{Theorem}[section]

\newtheorem{lemma}[theorem]{Lemma}
\newtheorem{proposition}[theorem]{Proposition}
\theoremstyle{definition}

\theoremstyle{remark}
\newtheorem{remark}[theorem]{Remark}
\numberwithin{equation}{section}

\newcommand\ra{\rightarrow}

\newcommand\wt{\widetilde}
\newcommand\half{\frac{1}{2}}
\newcommand\mmid{\parallel}
\newcommand\simto{\stackrel{\sim}{\longrightarrow}}
\font\cyr=wncyr10
\newcommand\Sha{\hbox{\cyr X}}
\newcommand\set[1]{{\left\{#1\right\}}}
\newcommand\pair[1]{\langle{#1}\rangle}
\newcommand\svec[2]{\begin{pmatrix}#1\\#2\end{pmatrix}}
\newcommand\leg[2]{\left(\frac{{#1}}{#2}\right)}
\newcommand\legbigg[2]{\biggl(\frac{{#1}}{#2}\biggr)}
\newcommand\aleg[2]{\left[\frac{{#1}}{#2}\right]}
\newcommand\rmT{{\mathrm{T}}}

\newcommand\bfd{{\mathbf{d}}}
\newcommand\bfx{{\mathbf{x}}}
\newcommand\bfy{{\mathbf{y}}}
\newcommand\bfz{{\mathbf{z}}}
\newcommand\bfA{{\mathbf{A}}}
\newcommand\bfD{{\mathbf{D}}}
\newcommand\bfI{{\mathbf{I}}}
\newcommand\bfM{{\mathbf{M}}}

\newcommand\bfO{{\mathbf{O}}}

\newcommand\BA{{\mathbb{A}}}
\newcommand\BF{{\mathbb{F}}}

\newcommand\BQ{{\mathbb{Q}}}
\newcommand\BR{{\mathbb{R}}}
\newcommand\BZ{{\mathbb{Z}}}

\providecommand\CD{{\mathcal{D}}}
\newcommand\CE{{\mathcal{E}}}
\newcommand\CH{{\mathcal{H}}}
\newcommand\CL{{\mathcal{L}}}

\newcommand\CP{{\mathcal{P}}}
\newcommand\CQ{{\mathcal{Q}}}
\newcommand\CU{{\mathcal{U}}}
\newcommand\CV{{\mathcal{V}}}

\newcommand\mse{\mathscr{E}}

\newcommand\Cl{{\mathrm{Cl}}}

\newcommand\diag{{\mathrm{diag}}}
\DeclareMathOperator\Ker{Ker}
\DeclareMathOperator\rank{rank}
\newcommand\Sel{{\mathrm{Sel}}}
\newcommand\sgn{{\mathrm{sgn}}}
\newcommand\tors{{\mathrm{tors}}}

\title[On a comparison of Cassels pairings]{On a comparison of Cassels pairings of different elliptic curves}
\author{Shenxing Zhang}
\address{School of Mathematics, Hefei University of Technology, Hefei, Anhui 230000, China}
\email{zhangshenxing@hfut.edu.cn}
\date{\today}
\keywords{elliptic curves; Selmer groups; 2-descent; Shafarevich-Tate groups; Cassels pairing; Gauss genus theory; class groups; full 2-torsion}
\subjclass[2020]{Primary 11G05; Secondary 11R11, 11R29}

\begin{document}
\maketitle

\begin{abstract}
Let $e_1,e_2,e_3$ be nonzero integers satisfying $e_1+e_2+e_3=0$.
Let $(a,b,c)$ be a primitive triple of odd integers satisfying $e_1a^2+e_2b^2+e_3c^2=0$.
Denote by $E: y^2=x(x-e_1)(x+e_2)$ and $\mathcal E: y^2=x(x-e_1a^2)(x+e_2b^2)$.
Assume that the $2$-Selmer groups of $E$ and $\mathcal E$ are minimal.
Let $n$ be a positive square-free odd integer, where the prime factors of $n$ are nonzero quadratic residues modulo each odd prime factor of $e_1e_2e_3abc$.
Then under certain conditions, the $2$-Selmer group and the Cassels pairing of the quadratic twist $E^{(n)}$ coincide with those of $\mathcal E^{(n)}$.
As a corollary, $E^{(n)}$ has Mordell-Weil rank zero without order $4$ element in its Shafarevich-Tate group, if and only if these holds for $\mathcal E^{(n)}$.
We also give some applications for the congruent elliptic curve.
\end{abstract}

\section{Introduction}
The quadratic twists family of a given elliptic curve are studied in many articles.
What we want to study is when two different families have similar arithmetic properties.
In \cite{ParshinZarhin1989}, given abelian varieties $A_1$ and $A_2$ over $K$ whose ranks agree over each finite extension of $K$, Zarhin asks if $A_1$ is necessarily isogenous to $A_2$.
In \cite{MazurRubin2015}, Mazur and Rubin consider the Selmer groups instead of ranks.
They give a sufficient condition on when the Selmer ranks of elliptic curves $E_1$ and $E_2$ agree over at most quadratic extension of $K$.
In particular, there are non-isogenous $p^k$-Selmer companions.
It's also known that if the $\ell$-Selmer ranks of $E_1$ and $E_2$ agree over each finite extension of $K$ for all but finitely many primes $\ell$, then $E_1$ and $E_2$ are $K$-isogenous, see~\cite{Chiu2020}.
For related results, see also~\cite{Kisilevsky2004, Yu2019}.

In this paper, we will study when the ranks of elliptic curves with full $2$-torsion agree over a set of quadratic fields.
More precisely, let
\[E=\mse_{e_1,e_2}:y^2=x(x-e_1)(x+e_2)\]
be an elliptic curve defined over $\BQ$ with full $2$-torsion, where $e_1,e_2,e_3=-e_1-e_2$ are non-zero integers.
Let $E^{(n)}=\mse_{e_1n,e_2n}$ be a quadratic twist of $E$, where $n$ is an odd positive square-free integer.
Let $(a,b,c)$ be a primitive triple of odd integers satisfying
\[e_1a^2+e_2b^2+e_3c^2=0.\]
Denote by $\CE=\mse_{e_1a^2,e_2b^2}$ and $\CE^{(n)}=\mse_{e_1na^2,e_2nb^2}$ its quadratic twist.

Since we want to compare $E^{(n)}$ for different triples $(e_1,e_2,e_3)$, we will assume that 
\[\gcd(e_1,e_2,e_3)=1\text{ or }2\]
for simplicity.
By a translation of $x$, one can show that $E\cong\mse_{e_2,e_3}\cong\mse_{e_3,e_1}$.
This gives a symmetry on $(e_1,e_2,e_3)$.
Without loss of generality, we may assume that $v_2(e_3)$ is maximal among $v_2(e_i)$, where $v_2$ is the normalized $2$-adic valuation.
Then $v_2(e_1)=v_2(e_2)<v_2(e_3)$.
We will write $2^{v_2(x)}\mmid x$.

Denote by $\Sel_2(E/\BQ)$ the $2$-Selmer group of $E$.
Then we have an exact sequence
\[0\ra E(\BQ)/2E(\BQ)\ra \Sel_2(E/\BQ)\ra \Sha(E/\BQ)[2]\ra0.\]
If $E$ has no rational point of order $4$, then $E(\BQ)[2^\infty]\cong(\BZ/2\BZ)^2$ since it has full $2$-torsion.
Therefore, $\Sel_2(E/\BQ)$ contains $E(\BQ)[2]\cong(\BZ/2\BZ)^2$.

The following theorems generalize the observations in \cite{WangZhang2022}, which give a relation between $E^{(n)}$ and $\CE^{(n)}$.

\begin{theorem}[= Theorem~\ref{thm:sha-odd}]\label{thm:main-odd}
Let $n$ be an odd positive square-free integer coprime with $e_1e_2e_3abc$, whose prime factors are quadratic residues modulo each odd prime factor of $e_1e_2e_3abc$.
Assume that
\begin{itemize}
\item $e_1,e_2$ are odd and $2\mmid e_3$.
\end{itemize}
If $\Sel_2(E/\BQ)\cong\Sel_2(\CE/\BQ)\cong(\BZ/2\BZ)^2$, then the following are equivalent:
\begin{enumerate}
\item $\rank_\BZ E^{(n)}(\BQ)=0$ and $\Sha\bigl(E^{(n)}/\BQ\bigr)[2^\infty]\cong(\BZ/2\BZ)^{2t}$;
\item $\rank_\BZ \CE^{(n)}(\BQ)=0$ and $\Sha\bigl(\CE^{(n)}/\BQ\bigr)[2^\infty]\cong(\BZ/2\BZ)^{2t}$.
\end{enumerate}
\end{theorem}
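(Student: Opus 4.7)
The plan is to reduce the statement to the comparison of the $2$-Selmer groups and Cassels pairings of $E^{(n)}$ and $\CE^{(n)}$, which is the main technical content of the paper.

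First, I would recast the conclusion in a Selmer-theoretic form. For an elliptic curve $A/\BQ$ with full $2$-torsion and no rational point of order $4$, the descent sequence
\[0\ra A(\BQ)/2A(\BQ)\ra\Sel_2(A/\BQ)\ra\Sha(A/\BQ)[2]\ra0\]
combined with Cassels' theorem (non-degeneracy of the Cassels pairing on $\Sha[2]/\Sha[2]_{\mathrm{div}}$) shows that the condition
\[\rank_\BZ A(\BQ)=0 \quad\text{and}\quad \Sha(A/\BQ)[2^\infty]\cong(\BZ/2\BZ)^{2t}\]
is equivalent to the conjunction of (a) $A(\BQ)/2A(\BQ)=A(\BQ)[2]$, i.e.\ the image of $A(\BQ)$ in $\Sel_2(A/\BQ)$ is exactly the rational $2$-torsion, and (b) the Cassels pairing on the quotient $\Sel_2(A/\BQ)/A(\BQ)[2]$ is non-degenerate of $\BF_2$-dimension $2t$. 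For $n>1$ odd, $E^{(n)}$ and $\CE^{(n)}$ both have $E(\BQ)[2^\infty]\cong(\BZ/2\BZ)^2$, so this equivalence applies to both curves.

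Next, I would construct an isomorphism $\varphi\colon\Sel_2(E^{(n)}/\BQ)\simto\Sel_2(\CE^{(n)}/\BQ)$ identifying the images of $2$-torsion and intertwining the Cassels pairings. Using the standard $2$-descent, each Selmer group is described as a set of triples $(d_1,d_2,d_3)$ of square-free integers supported in the bad primes, with $d_1d_2d_3$ a square, subject to everywhere-local solvability of an explicit system of conic equations. At a prime $p\mid n$, the quadratic-residue hypothesis makes $e_ia^2$ and $e_i$ differ by a square in $\BQ_p^\times$, so the local conditions defining the two Selmer groups coincide. At odd primes dividing $e_1e_2e_3abc$, the primitivity of the triple $(a,b,c)$ together with $e_1a^2+e_2b^2+e_3c^2=0$ forces the local conic conditions for $\CE^{(n)}$ to match those of $E^{(n)}$ up to the natural bijection. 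The $2$-adic place is handled separately using the hypothesis $e_1,e_2$ odd and $2\mmid e_3$, plus the fact that $a,b,c$ are odd, to reduce the local condition at $2$ for $\CE^{(n)}$ to that for $E^{(n)}$. The minimality assumption $\Sel_2(E/\BQ)\cong\Sel_2(\CE/\BQ)\cong(\BZ/2\BZ)^2$ ensures that the ambient groups in which the Selmer classes are cut out have the same structure.

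Finally, I would verify that $\varphi$ preserves the Cassels pairing. Writing the pairing as a sum of local Hilbert symbols $\sum_v(\alpha_v,\beta_v)_v$ on carefully chosen global lifts of the Selmer classes, one checks that the same choices of lifts on both sides (rescaled by $a,b,c$ as dictated by $e_1a^2+e_2b^2+e_3c^2=0$) produce matching local Hilbert symbols at every place; the primes of $n$ contribute trivially by the residue hypothesis, and the remaining local symbols either agree term-by-term or cancel in pairs by the primitivity of $(a,b,c)$.

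The main obstacle is the simultaneous local matching at $p=2$ and at the odd primes dividing $e_1e_2e_3abc$, since the valuations of $e_ia^2$ differ from those of $e_i$ at primes of $abc$; the primitivity of the triple together with the equation $e_1a^2+e_2b^2+e_3c^2=0$ is what ultimately forces the Hilbert-symbol computations to align. Carrying out this matching uniformly at every place, for both the Selmer conditions and the pairing, is the technical heart of the argument.
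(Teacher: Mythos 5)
Your overall strategy coincides with the paper's: reduce via the non-degeneracy criterion (Lemma~\ref{lem:non-deg}) to comparing $\Sel_2'$ and the Cassels pairing, identify the two pure Selmer groups through the common linear-algebra description (Theorem~\ref{thm:selmer-odd}), and then match the pairings place by place. But there is a genuine gap at the heart of the pairing comparison. Your claim that ``the primes of $n$ contribute trivially by the residue hypothesis'' is not correct as stated: the local Cassels pairings at $p\mid n$ are in general the \emph{only} nonzero local contributions, and what must be shown is that the contributions for $E^{(n)}$ and $\CE^{(n)}$ \emph{agree} there. With the natural rescaled tangent data $\CQ_1=(\alpha_1,a\beta_1,a\gamma_1)$ etc., the linear form $\CL_1(\CP_p)=ae_1n\alpha_1-bd_3\gamma_1v+cd_2\beta_1u$ mixes $a,b,c$ in a single term, so it is not $L_1(P_p)$ times an obvious local square, and primitivity of $(a,b,c)$ does not make anything ``cancel in pairs.'' The paper's mechanism is the identity of Lemma~\ref{lem:fund-equality}, which (using $e_1a^2+e_2b^2+e_3c^2=0$) gives $\CL_1(\CP_p)L_1(P_p)=\tfrac12(a+b)(b+c)(c+a)\cdot(\text{square})$, combined with Lemma~\ref{lem:special-residue}, a nontrivial fact proved by parametrizing the conic: after normalizing $a\equiv b\equiv c\equiv1\bmod4$, the quantity $\tfrac18(a+b)(b+c)(c+a)$ is $\equiv1\bmod4$ and a quadratic residue modulo every $p\mid n$. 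Without these two ingredients the local symbols at $p\mid n$ cannot be matched, and your outline supplies no substitute.

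A second, smaller gap concerns the place $v=2$, which is precisely what distinguishes this theorem from the general case (where $p\equiv1\bmod8$ makes every $d_i'$ a $2$-adic square and the contribution at $2$ vanishes for both curves). Here $d_i'$ need not be a square in $\BQ_2$, and the paper must choose explicit $2$-adic points $\CP_2$, $P_2$ in each congruence case of $(d_1,d_2,d_3)\bmod 4$ (and $\bmod\ 8$), adjust signs of the $\gamma_i$ so that $v_2(\CL_i(\CP_2))=v_2(L_i(P_2))\le 1$, and verify congruences modulo $8$ or $16$ (again invoking Lemmas~\ref{lem:fund-equality} and \ref{lem:special-residue} in the subcase $d_3\equiv-1\bmod4$). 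Your proposal only gestures at this step. Finally, the absence of rational $4$-torsion on $E^{(n)}$ and $\CE^{(n)}$ is not automatic from $n$ being odd; it follows from $2\mmid e_3$ via Ono's criterion (Lemma~\ref{lem:ono}), and should be justified rather than asserted.
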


When $\gcd(e_1,e_2,e_3)=2$, $E^{(n)}=\mse_{e_1/2,e_2/2}^{(2n)}$ is an even quadratic twist of an elliptic curve in Theorem~\ref{thm:main-odd}. In which case, an additional condition is required.
\begin{theorem}[= Theorem~\ref{thm:sha-even}]\label{thm:main-even}
Let $n$ be an odd positive square-free integer coprime with $e_1e_2e_3abc$, whose prime factors are quadratic residues modulo each odd prime factor of $e_1e_2e_3abc$.
Assume that
\begin{itemize}
\item $2\mmid e_1, 2\mmid e_2, 4\mid e_3$;
\item both $E$ and $E^{(n)}$ have no rational point of order $4$;
\item if $e_2>0$ and $e_3<0$, then every prime factor of $n$ is congruent to $1$ modulo $4$, or every odd prime factor of $e_2e_3bc$ is congruent to $1$ modulo $4$;
\item if $e_3>0$ and $e_1<0$, then every prime factor of $n$ is congruent to $1$ modulo $4$, or every odd prime factor of $e_1e_3ac$ is congruent to $1$ modulo $4$;
\item if $e_1>0$ and $e_2<0$, then every prime factor of $n$ is congruent to $1$ modulo $4$.
\end{itemize}
If $\Sel_2(E/\BQ)\cong\Sel_2(\CE/\BQ)\cong(\BZ/2\BZ)^2$, then the following are equivalent:
\begin{enumerate}
\item $\rank_\BZ E^{(n)}(\BQ)=0$ and $\Sha\bigl(E^{(n)}/\BQ\bigr)[2^\infty]\cong(\BZ/2\BZ)^{2t}$;
\item $\rank_\BZ \CE^{(n)}(\BQ)=0$ and $\Sha\bigl(\CE^{(n)}/\BQ\bigr)[2^\infty]\cong(\BZ/2\BZ)^{2t}$.
\end{enumerate}
\end{theorem}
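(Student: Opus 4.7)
My plan is to deduce the equivalence from an explicit isomorphism $\Sel_2(E^{(n)}/\BQ)\simto\Sel_2(\CE^{(n)}/\BQ)$ that intertwines the Cassels pairings, thereby reducing the structural statement for $\Sha$ to a matching of invariants. I would begin by invoking the standard 2-descent presentation: for $E'=\mse_{A,B}$, an element of $\Sel_2(E'^{(n)}/\BQ)$ is a pair $(d_1,d_2)$ of square-free integers (with a divisibility restriction on $d_1d_2$ with respect to $ABn$ and $(-A-B)n$) such that the three associated conics have $\BQ_p$-points at every prime. The minimality hypothesis $\Sel_2(E/\BQ)\cong\Sel_2(\CE/\BQ)\cong(\BZ/2\BZ)^2$ says that for the untwisted curves only the image of the $2$-torsion survives, and the problem reduces to the twisted setting.

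Second, I would set up the natural map $(d_1,d_2)\mapsto(d_1,d_2)$, rescaled by $a^2,b^2,c^2$ to clear squares wherever needed, and show it is a bijection between the two Selmer groups. The hypothesis that each prime factor of $n$ is a quadratic residue modulo every odd prime factor of $e_1e_2e_3abc$ ensures that the local solvability conditions at odd primes coincide on the two curves. At the prime $2$, because we are in the case $2\mmid e_1$, $2\mmid e_2$, $4\mid e_3$, the comparison is considerably more delicate than in the setting of Theorem~\ref{thm:main-odd}; the five sign-dependent congruence conditions are exactly what makes the $2$-adic local conditions align, and the hypothesis that $E^{(n)}$ has no rational $4$-torsion prevents spurious $2$-Selmer elements from entering the picture.

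Third, I would compare the Cassels pairings via the Hilbert-symbol formula that writes $\langle(d_1,d_2),(d_1',d_2')\rangle$ as a sum of local symbols computed from chosen local descent solutions. The residuosity hypothesis controls the contributions at odd primes and the hypotheses of the theorem control the contribution at $2$, yielding an equality of pairings under the bijection. Because the rank-zero hypothesis gives $E^{(n)}(\BQ)/2E^{(n)}(\BQ)\cong(\BZ/2\BZ)^2$, the quotient $\Sha(E^{(n)}/\BQ)[2]$ has the same size for both curves, and the condition $\Sha[2^\infty]\cong(\BZ/2\BZ)^{2t}$, being equivalent to non-degeneracy of the Cassels pairing on $\Sha[2]$ under the rank-zero hypothesis, transports across the bijection. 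This produces the equivalence of (1) and (2).

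The principal difficulty is the $2$-adic analysis. In contrast to the odd case of Theorem~\ref{thm:main-odd}, the condition $4\mid e_3$ breaks the symmetry between $E^{(n)}$ and $\CE^{(n)}$ at $2$, and the extra hypotheses decompose into cases indexed by the sign pattern of $(e_1,e_2,e_3)$. I expect the proof to proceed by a detailed case-by-case examination of these sign patterns, in each case verifying both the compatibility of local conditions at $2$ and the equality of $2$-adic Hilbert symbols. The conditions on residues modulo $4$ of the prime factors of $n$, or alternatively of the odd prime factors of the appropriate subproduct of $e_1e_2e_3abc$, are precisely the inputs that force these delicate $2$-adic calculations to come out symmetrically in each sign case.
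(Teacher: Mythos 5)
Your overall strategy --- identify the two pure $2$-Selmer groups via a common set of representatives, show the local Cassels pairings agree place by place, and finish with the equivalence between ``$\rank_\BZ=0$ and $\Sha[2^\infty]\cong(\BZ/2\BZ)^{2t}$'' and non-degeneracy of the pairing on a $2t$-dimensional $\Sel_2'$ --- is exactly the paper's. The Selmer-group half of your plan is essentially right: the paper carries it out in Theorems~\ref{thm:selmer-even1} and \ref{thm:selmer-even2}, where the sign-dependent mod-$4$ hypotheses guarantee that the resulting kernel matrix is independent of $a,b,c$, and where one must also normalize representatives ($d_i\mid n$, $d_1>0$, $d_3\equiv1\bmod4$) and use bilinearity of the pairing to arrange $d_2',d_3'>0$ before comparing.

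The gap is in the pairing comparison. You assert that the residuosity hypotheses ``yield an equality of pairings under the bijection,'' but you supply no mechanism for why the tangent-line values for $\CE^{(n)}$ and $E^{(n)}$ produce equal local Hilbert symbols. The local points of $\CD_\Lambda^{(n)}$ are obtained from those of $D_\Lambda^{(n)}$ by scaling coordinates by $a,b,c$, and the corresponding linear forms then take values whose ratio is \emph{not} a square in general: at a place where the local point mixes all three coordinates one compares something like $\CL_1(\CP)=ae_1n\alpha_1+cd_2\beta_1u-bd_3\gamma_1v$ with $L_1(P)=e_1n\alpha_1+d_2\beta_1u-d_3\gamma_1v$. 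The engine of the paper's proof is the identity of Lemma~\ref{lem:fund-equality}, which shows that the product $\CL_i(\CP)\,L_i(P)$ equals $\half(a+b)(b+c)(c+a)$ times a rational square whenever the point lies on the relevant conic (the error term $\half(e_1a+e_2b+e_3c)\bigl(x^2/e_1+y^2/e_2+z^2/e_3\bigr)$ vanishes there because the quadratic form does), combined with Lemma~\ref{lem:special-residue}, which shows that $(a+b)(b+c)(c+a)/8\equiv1\bmod4$ is a quadratic residue modulo every $p\mid n$. That last fact is itself not a formal consequence of the hypothesis that $p$ is a residue modulo the prime factors of $abc$; it requires parametrizing $(a,b,c)$ by coprime integers $\alpha,\beta$ and showing the prime factors of the resulting $\lambda$ divide $e_1e_2e_3$. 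Without these two lemmas (or substitutes for them), the central step of your argument --- and the residual $2$-adic case analysis you correctly anticipate, which reuses the same identity at $v=2$ --- does not go through.
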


For general triples $(e_1,e_2,e_3)$, we require that the prime factors of $n$ are congruent to $1$ modulo $8$.
\begin{theorem}[= Theorem~\ref{thm:sha-general}]\label{thm:main-general}
Let $n$ be an odd positive square-free integer coprime with $e_1e_2e_3abc$, whose prime factors are quadratic residues modulo each odd prime factor of $e_1e_2e_3abc$.
Assume that
\begin{itemize}
\item both $E$ and $E^{(n)}$ have no rational point of order $4$;
\item every prime factor of $n$ is congruent to $1$ modulo $8$.
\end{itemize}
If $\Sel_2(E/\BQ)\cong\Sel_2(\CE/\BQ)\cong(\BZ/2\BZ)^2$, then the following are equivalent:
\begin{enumerate}
\item $\rank_\BZ E^{(n)}(\BQ)=0$ and $\Sha\bigl(E^{(n)}/\BQ\bigr)[2^\infty]\cong(\BZ/2\BZ)^{2t}$;
\item $\rank_\BZ \CE^{(n)}(\BQ)=0$ and $\Sha\bigl(\CE^{(n)}/\BQ\bigr)[2^\infty]\cong(\BZ/2\BZ)^{2t}$.
\end{enumerate}
\end{theorem}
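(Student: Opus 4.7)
The plan is to reduce the equivalence of (1) and (2) to a structural identification of the 2-Selmer groups together with an agreement of Cassels pairings between $E^{(n)}$ and $\CE^{(n)}$, and then to unpack what ``rank zero and $\Sha[2^\infty]\cong(\BZ/2\BZ)^{2t}$'' means purely in terms of these data.

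First I would translate (1) into Selmer/Cassels language. Because $n$ is coprime with $e_1e_2e_3abc$ and $E$, $E^{(n)}$ have no rational point of order $4$, the full $2$-torsion of $E^{(n)}(\BQ)$ equals $E^{(n)}(\BQ)[2]\cong(\BZ/2\BZ)^2$, so the descent sequence
\[0\lra E^{(n)}(\BQ)/2E^{(n)}(\BQ)\lra \Sel_2(E^{(n)}/\BQ)\lra \Sha(E^{(n)}/\BQ)[2]\lra 0\]
shows that (1) is equivalent to the conjunction of the following three assertions: (a) the image of $E^{(n)}(\BQ)$ in $\Sel_2(E^{(n)}/\BQ)$ has order $4$, hence equals the image of $E^{(n)}(\BQ)[2]$; (b) $\Sel_2(E^{(n)}/\BQ)\cong(\BZ/2\BZ)^{2t+2}$; and (c) the Cassels pairing on $\Sha(E^{(n)}/\BQ)[2]$ is non-degenerate (which, by Cassels' theorem, is what rules out elements of order $4$ in $\Sha[2^\infty]$ once $\Sha[2]\cong(\BZ/2\BZ)^{2t}$). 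The same three conditions characterize (2) for $\CE^{(n)}$, using that $\CE^{(n)}$ likewise has no $4$-torsion under the given hypotheses.

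Next I would invoke the Selmer-and-pairing comparison theorem established earlier in the paper (the analogue of the results behind Theorems~\ref{thm:main-odd} and \ref{thm:main-even}, now in full generality): under the hypothesis that the prime factors of $n$ are quadratic residues modulo every odd prime factor of $e_1e_2e_3abc$ and are congruent to $1\pmod 8$, there is a canonical isomorphism $\Sel_2(E^{(n)}/\BQ)\simto \Sel_2(\CE^{(n)}/\BQ)$ that sends the image of $E^{(n)}(\BQ)[2]$ to the image of $\CE^{(n)}(\BQ)[2]$ and that identifies the two Cassels pairings. Granting this, the three conditions (a)--(c) transfer word-for-word from $E^{(n)}$ to $\CE^{(n)}$, and the theorem follows. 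To set this up one parametrizes $\Sel_2(E^{(n)}/\BQ)$ via triples $(d_1,d_2,d_3)$ with $d_1d_2d_3\in\BQ^{*2}$ whose prime divisors lie in the support of $2e_1e_2e_3n$, subject to local solvability of the pair of conics $d_1u_1^2-d_2u_2^2=e_3n$ and $d_1u_1^2-d_3u_3^2=-e_2n$ (and the parallel description for $\CE^{(n)}$), and shows, place by place, that the local conditions match and that the local Hilbert-symbol formulas for the Cassels pairing agree.

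The main obstacle is the matching at the prime $2$ and at $\infty$. At the odd primes $p\mid e_1e_2e_3abc$ the condition that each prime factor of $n$ is a square mod $p$ makes $e_1,e_2,e_3$ and $e_1a^2,e_2b^2,e_3c^2$ have indistinguishable behaviour in the descent after twisting by $n$; at primes dividing $n$ the hypothesis $p\equiv1\pmod 8$ makes every element of $\BZ_p^*$ a square up to sign and a power of $2$, so the local descent conditions and local pairings collapse into each other; at $\infty$ this same congruence forces $n>0$ to act trivially on sign questions. Carrying out the prime-$2$ analysis without any restriction on the parities of $e_1,e_2,e_3$ (unlike the stratified treatments in Theorems~\ref{thm:main-odd} and \ref{thm:main-even}) is where the $p\equiv1\pmod 8$ condition is indispensable, and where the detailed Hilbert-symbol bookkeeping must be verified; once this is done the equivalence of (1) and (2) is immediate.
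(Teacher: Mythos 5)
Your overall architecture matches the paper's: translate ``rank zero and $\Sha[2^\infty]\cong(\BZ/2\BZ)^{2t}$'' into ``$\Sel_2'$ has dimension $2t$ and the Cassels pairing is non-degenerate'' (this is exactly Lemma~\ref{lem:non-deg}), identify the two pure $2$-Selmer groups via a parametrization by triples $(d_1,d_2,d_3)$ that does not see $a,b,c$ (Theorem~\ref{thm:selmer-general}), and then verify place by place that the local Cassels pairings agree. That reduction is correct and is what the paper does.

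There is, however, a genuine gap in where you locate the difficulty and in how you propose to close it. You assert that the main obstacle is at $v=2$ and $v=\infty$, and that at $p\mid n$ the condition $p\equiv1\bmod 8$ ``makes every element of $\BZ_p^\times$ a square up to sign and a power of $2$, so the local descent conditions and local pairings collapse into each other.'' This is backwards. At every $v\mid 2e_1e_2e_3abc\infty$ the hypothesis $p\equiv1\bmod 8$ forces each prime factor of $n$, hence each $d_i'$, to be a square in $\BQ_v^\times$, so $[\CL_i(\CP_v),d_i']_v=[L_i(P_v),d_i']_v=0$ trivially; those places carry no content in the general case. The entire burden lies at the primes $p\mid n$, and there your stated reason fails: $p\equiv1\bmod 8$ only makes $-1$ and $2$ squares modulo $p$; it does not trivialize $\BZ_p^\times/\BZ_p^{\times2}$, and the symbol $[\CL_i(\CP_p),d_i']_p$ genuinely depends on whether $\CL_i(\CP_p)$ is a square modulo $p$. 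Comparing $\CL_i(\CP_p)$ with $L_i(P_p)$ one finds their ratio is $a$, $b$, $c$, or (in the cases where $p$ divides two of the $d_i$) their product is $\half(a+b)(b+c)(c+a)$ times a square, via the identity
\[(ax+by+cz)(x+y+z)-\half(a+b)(b+c)(c+a)\Bigl(\tfrac x{b+c}+\tfrac y{c+a}+\tfrac z{a+b}\Bigr)^2=\half(e_1a+e_2b+e_3c)\Bigl(\tfrac{x^2}{e_1}+\tfrac{y^2}{e_2}+\tfrac{z^2}{e_3}\Bigr)\]
applied to the null vector $(x,y,z)$ built from the tangent data (Lemma~\ref{lem:fund-equality}). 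One must then prove that $\frac18(a+b)(b+c)(c+a)$ is a quadratic residue modulo $p$; this is not automatic, because its prime factors need not divide $e_1e_2e_3abc$, and the paper needs a separate parametrization argument (Lemma~\ref{lem:special-residue}, writing $\lambda a,\lambda b,\lambda c$ as binary quadratic forms in coprime $\alpha,\beta$ and showing every prime factor of $\lambda$ divides $e_1e_2e_3$) to conclude it. Without this identity and this residue computation the agreement of the local pairings at $p\mid n$ does not follow from the stated hypotheses, so your proposal as written does not close.
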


In each case, we will study the local solvability of homogeneous spaces and show the consistency of $2$-Selmer groups.
Then we will use Lemmas~\ref{lem:special-residue} and \ref{lem:fund-equality} to show the consistency of Cassels pairings.
The main difference of these theorems is the local solvability and the Cassels pairing at the place $2$.
We will also give applications for the congruent elliptic curve, see Theorems~\ref{thm:cong-odd-2} and \ref{thm:cong-even-2}.

\vspace{0.5pc}
The symbols we will use are listed here.
\begin{itemize}
\item $v_p$ the normalized $p$-adic valuation.
\item $\gcd(m_1,\dots,m_t)$ the greatest common divisor of integers $m_1,\dots,m_t$.
\item $(\alpha,\beta)_v\in\set{\pm1}$ the Hilbert symbol, $\alpha,\beta\in\BQ_v^\times$.
\item $[\alpha,\beta]_v\in\BF_2$ the additive Hilbert symbol, i.e., 
$(\alpha,\beta)_v=(-1)^{[\alpha,\beta]_v}$.
\item $\left(\frac\alpha\beta\right)=\prod_{p\mid\beta}(\alpha,\beta)_p\in\set{\pm1}$ the Jacobi symbol, where $\alpha$ is coprime with $\beta>0$.
\item $\aleg\alpha\beta=\sum_{p\mid\beta}[\alpha,\beta]_p\in\BF_2$ the additive Jacobi symbol, where $\alpha$ is coprime with $\beta>0$.
\item $m^*=(-1,m)_2 m\equiv1\bmod4$ for nonzero odd integer $m$.

\item $\Lambda=(d_1,d_2,d_3)$ a triple of square-free integers, where $d_1d_2d_3$ is a square.
\item $D_\Lambda$ the homogeneous space associated to $E$ and $\Lambda$, see~\eqref{eq:D_Lambda}.
\item $\Sel_2'(\mse)$ the pure $2$-Selmer group of $\mse$, see \eqref{eq:pure-2selmer}.
We will simply write $\Lambda\in\Sel_2'(\mse)$ the class of $\Lambda\in\Sel_2(\mse/\BQ)$ for convention.

\item ${\bf0}=(0,\dots,0)^\rmT$ and ${\bf1}=(1,\dots,1)^\rmT$.
\item $\bfI$ the identity matrix and $\bfO$ the zero matrix.
\item $\bfA=\bfA_n$ a matrix associated to $n$, see~\eqref{eq:A-n}.
\item $\bfD_u=\diag\Bigl\{\aleg u{p_1},\dots,\aleg u{p_k}\Bigr\}$, see~\eqref{eq:D-u}.
\end{itemize}

\section{The general case}

\subsection{Classical $2$-descent}

As shown in \cite{Cassels1998}, the $2$-Selmer group $\Sel_2(E/\BQ)$ can be identified with
\[\set{\Lambda=(d_1,d_2,d_3)\in \bigl(\BQ^\times/\BQ^{\times2}\bigr)^3:
D_\Lambda(\BA_\BQ)\neq\emptyset,d_1d_2d_3\equiv1\bmod\BQ^{\times2}},\]
where $D_\Lambda$ is a genus one curve defined by
\begin{equation}\label{eq:D_Lambda}
\begin{cases}
	H_1:& e_1t^2+d_2u_2^2-d_3u_3^2=0, \\
	H_2:& e_2t^2+d_3u_3^2-d_1u_1^2=0, \\
	H_3:& e_3t^2+d_1u_1^2-d_2u_2^2=0.
\end{cases}
\end{equation}
Under this identification,  the points $O,(e_1,0),(-e_2,0),(0,0)$ and other point $(x,y)\in E(\BQ)$ correspond to 
\[(1,1,1),\ (-e_3,-e_1e_3,e_1),\ (-e_2e_3,e_3,-e_2),\ (e_2,-e_1,-e_1e_2)\]
and $(x+e_2,x-e_1,x)$ respectively.

Denote by 
\begin{equation}\label{eq:pure-2selmer}
\Sel_2'(E):=\frac{\Sel_2(E/\BQ)}{E(\BQ)_\tors/2E(\BQ)_\tors}
\end{equation}
the pure $2$-Selmer group of $E$ defined over $\BQ$.

\begin{lemma}[{\cite{Ono1996}}]\label{lem:ono}
$E(\BQ)$ has a point of order $4$ if and only if one of the three pairs $(-e_1,e_2), (-e_2,e_3)$ and $(-e_3,e_1)$ consists of squares of integers.
\end{lemma}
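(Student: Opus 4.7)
The plan is to reduce the existence of a rational point of order $4$ on $E$ to the divisibility by $2$ of a nontrivial $2$-torsion point in $E(\BQ)$, and then read off the latter condition from the explicit $2$-descent map already described in the paper.

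First I would observe that, since $E(\BQ)$ contains the full $2$-torsion, the group $E(\BQ)$ has an element of order $4$ if and only if at least one of the three nontrivial points $(e_1,0),(-e_2,0),(0,0)$ lies in $2E(\BQ)$. Indeed, if $P$ has order $4$ then $2P$ is a nontrivial rational $2$-torsion point, and conversely if $T=2P$ with $T$ nontrivial $2$-torsion then $P$ has order exactly $4$ (it cannot have order $1$ or $2$, else $T=O$).

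Next I would use the injection $E(\BQ)/2E(\BQ)\hookrightarrow\Sel_2(E/\BQ)\subset\bigl(\BQ^\times/\BQ^{\times2}\bigr)^3$ recalled just above Lemma~\ref{lem:ono}. Under this injection, a point of $E(\BQ)$ lies in $2E(\BQ)$ if and only if its image is trivial modulo squares. The images of the three nontrivial $2$-torsion points were spelled out in the paper:
\[(e_1,0)\mapsto(-e_3,-e_1e_3,e_1),\quad(-e_2,0)\mapsto(-e_2e_3,e_3,-e_2),\quad(0,0)\mapsto(e_2,-e_1,-e_1e_2).\]
Each image is trivial in $\bigl(\BQ^\times/\BQ^{\times2}\bigr)^3$ precisely when two of its coordinates are rational squares, the third being then automatic from the product relation $d_1d_2d_3\in\BQ^{\times2}$. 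Reading this off for each of the three points yields respectively the conditions that $(-e_3,e_1)$, $(-e_2,e_3)$, or $(-e_1,e_2)$ consist of rational squares.

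Finally, since $e_1,e_2,e_3$ are integers, being a square in $\BQ$ is the same as being a square of an integer, so the three conditions translate without loss into the statement of the lemma. Combining this with the first observation gives the claimed equivalence. There is no real obstacle here beyond unpacking the descent map; the only care needed is in verifying that the third coordinate is indeed redundant given the product relation, which is immediate from the explicit formulas listed above.
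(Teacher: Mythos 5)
Your argument is correct. The paper itself gives no proof of this lemma (it is quoted from Ono's paper on Euler's concordant forms), but your route is the standard one and is essentially Ono's: a nontrivial rational $2$-torsion point $T$ lies in $2E(\BQ)$ exactly when its image under the complete $2$-descent map is trivial, and for $T=(e,0)$ that image is trivial precisely when the two differences of $e$ with the other roots are rational (hence integer) squares, which for the roots $0,e_1,-e_2$ reproduces the three pairs $(-e_1,e_2)$, $(-e_2,e_3)$, $(-e_3,e_1)$. The two reductions you make --- order $4$ exists iff some nontrivial $2$-torsion point is divisible by $2$, and triviality of the image needs only two of the three coordinates to be squares because of the product relation --- are both correctly justified, so there is nothing to add.
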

If $E$ has no rational point of order $4$, then $\Sel_2(E/\BQ)$ contains $E(\BQ)[2^\infty]=E(\BQ)[2]\cong(\BZ/2\BZ)^2$ and therefore $\Sel_2'(E)=\Sel_2(E/\BQ)/E(\BQ)[2]$.
Cassels in \cite{Cassels1998} defined a skew-symmetric bilinear pairing $\pair{-,-}$ on the $\BF_2$-vector space $\Sel_2'(E)$.
We will write it additively.
For any $\Lambda\in\Sel_2(E)$, choose $P=(P_v)_v\in D_\Lambda(\BA_\BQ)$.
Since $H_i$ is locally solvable everywhere, there exists $Q_i\in H_i(\BQ)$ by Hasse-Minkowski principle.
Let $L_i$ be a linear form in three variables such that $L_i=0$ defines the tangent plane of $H_i$ at $Q_i$.
For any $\Lambda'=(d_1',d_2',d_3')\in \Sel_2(E)$, define
\[\pair{\Lambda,\Lambda'}_E=\sum_v \pair{\Lambda,\Lambda'}_{E,v}\in\BF_2,\quad\text{where}\quad \pair{\Lambda,\Lambda'}_{E,v}=\sum_{i=1}^3 \bigl[L_i(P_p), d_i'\bigr]_v.\]
This pairing is independent of the choice of $P$ and $Q_i$, and is trivial on $E(\BQ)[2]$.
We will omit the subscript $E$ if there is no confusion.

\begin{lemma}[{\cite[Lemma 7.2]{Cassels1998}}]\label{lem:cassels}
The local Cassels pairing $\pair{\Lambda,\Lambda'}_{E,p}=0$ if
\begin{itemize}
\item $p\nmid 2\infty$,
\item the coefficients of $H_i$ and $L_i$ are all integral at $p$, and
\item modulo $D_\Lambda$ and $L_i$ by $p$, they define a curve of genus $1$ over $\BF_p$ together with tangents to it.
\end{itemize}
\end{lemma}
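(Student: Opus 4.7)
This is a local vanishing result at an odd prime of good reduction, and the strategy is $p$-adic reduction combined with the triviality of the Hilbert symbol at odd $p$ on pairs of units.

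First, I normalize: scale the projective coordinates of $P_p=(t,u_1,u_2,u_3)$ to lie in $\BZ_p$ with at least one coordinate a $p$-adic unit. By the integrality and smoothness hypotheses on $H_i$ and $L_i$, the reduction $\bar P_p$ is a smooth point on the mod-$p$ curve $\bar D_\Lambda$, which is itself smooth of genus one. Each tangent plane $\bar L_i$ meets $\bar H_i$ in the divisor $2\bar Q_i$, so $L_i(P_p)\in\BZ_p^\times$ unless $\bar P_p$ coincides with $\bar Q_i$ in the reduction.

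Second, I exploit the general principle that $(u,u')_p=1$ whenever $u,u'\in\BZ_p^\times$ and $p$ is odd. Writing $d_i'=p^{\epsilon_i}u_i'$ with $\epsilon_i\in\set{0,1}$ and $u_i'\in\BZ_p^\times$, each local term decomposes as
\[[L_i(P_p),d_i']_p = v_p(L_i(P_p))\cdot\aleg{u_i'}{p} + \epsilon_i\cdot\aleg{L_i(P_p)\,p^{-v_p(L_i(P_p))}}{p}\]
modulo a $(-1)$-type correction contributing only when both valuations are odd. The global constraint $d_1d_2d_3\equiv1\bmod\BQ^{\times2}$ forces $\epsilon_1+\epsilon_2+\epsilon_3$ to be even.

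Third, I complete the argument by choosing $P_p$ whose reduction $\bar P_p$ avoids each of $\bar Q_1,\bar Q_2,\bar Q_3$, which is possible since the Cassels pairing is independent of the choice of local point and a smooth genus-one curve over $\BF_p$ typically carries many more than three $\BF_p$-rational points. Then every $L_i(P_p)$ is a unit, every $v_p$-term in the display vanishes, and the residual sum $\sum_i\epsilon_i\aleg{L_i(P_p)}{p}$ vanishes via the algebraic identity on $D_\Lambda$ that the product $L_1L_2L_3$ equals a $p$-adic unit times a global square in $\BQ(D_\Lambda)^\times$ — a reflection of the fact that the divisor $2Q_1+2Q_2+2Q_3$ is linearly equivalent to twice a hyperplane section of the degree-four model cut out by the $H_i$.

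\textbf{Main obstacle.} The genuinely delicate point is ensuring that $P_p$ can be chosen with $\bar P_p\notin\set{\bar Q_1,\bar Q_2,\bar Q_3}$; for very small $p$ this may fail, in which case one either translates $P_p$ by a $\BQ_p$-rational $2$-torsion point to move its reduction, or carries out a higher-order expansion in the uniformizer to compute the residual Legendre symbols directly. Equally subtle is the product identity $L_1L_2L_3\equiv(\text{unit})\cdot(\text{square})$ on $D_\Lambda$, which is the core geometric content of Cassels' argument in \cite{Cassels1998} and which I would derive by computing the divisor of each $L_i$ on the genus-one curve $D_\Lambda\subset\mathbb P^3$ and applying Riemann-Roch.
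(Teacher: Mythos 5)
The paper offers no proof of this lemma: it is quoted directly from Cassels \cite{Cassels1998}, so your attempt can only be measured against Cassels' original argument. Your first paragraph already contains the decisive fact but you do not push it to its conclusion. Since $L_i=0$ is the tangent plane to the conic $H_i$ at $Q_i$, the pullback of $L_i$ under a parametrization $\mathbb{P}^1\simto H_i$ is a binary quadratic form with a double root, i.e.\ $L_i$ restricted to $H_i$ equals $c_i\cdot(\text{linear form})^2$ for a constant $c_i$. Hence $L_i(P_p)\equiv c_i\bmod \BQ_p^{\times2}$ for \emph{every} choice of $P_p$, and the good-reduction hypothesis forces $c_i$ to be a unit times a square (parametrize the conic over $\BZ_p$; the reduced form is a nonzero tangent to a smooth conic over $\BF_p$, so the quadratic form has unit content). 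Moreover $p\nmid d_i'$ is automatic here: smoothness of the reduction of $D_\Lambda$ forces $p\nmid 2e_1e_2e_3n$, and then $\Lambda'\in\Sel_2$ forces $p\nmid d_1'd_2'd_3'$ by the classical descent criterion quoted just after \eqref{eq:D_Lambda}. Since $p$ is odd, each term $[L_i(P_p),d_i']_p$ is a Hilbert symbol of two units times squares and vanishes separately. No choice of $P_p$, no point counting, and no product identity for $L_1L_2L_3$ is needed.

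The gap in your version is exactly the step you flag as the main obstacle, and it is not repaired. For $p=3$ or $p=5$ the Hasse--Weil bound does not guarantee that $\bar D_\Lambda(\BF_p)$ contains a point avoiding the up to six preimages of $\bar Q_1,\bar Q_2,\bar Q_3$ under the three projections $D_\Lambda\to H_i$, and neither proposed workaround is carried out: translating by a $2$-torsion point only permutes a possibly too small set of residue points, while the ``higher-order expansion in the uniformizer'' is precisely the double-root computation above, at which point the detour through point-avoidance becomes superfluous. Separately, your final claim that $\sum_i\epsilon_i\aleg{L_i(P_p)}{p}$ vanishes because $L_1L_2L_3$ is a unit times a square on $D_\Lambda$ is both unjustified (you would have to compute the divisor of $L_1L_2L_3$ on the degree-four curve and exhibit the unit leading constant, which you do not do) and unnecessary, since under the hypotheses all $\epsilon_i=0$ as noted above; as stated it cannot be the load-bearing step of a correct proof.
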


\begin{lemma}[{\cite[p.~2157]{Wang2016}}]\label{lem:non-deg}
If $E$ has no rational point of order $4$, then the following are equivalent:
\begin{enumerate}
\item $\rank_\BZ E(\BQ)=0$ and $\Sha(E/\BQ)[2^\infty]\cong (\BZ/2\BZ)^{2t}$;
\item $\Sel_2'(E)$ has dimension $2t$ and the Cassels pairing on it is non-degenerate.
\end{enumerate}
\end{lemma}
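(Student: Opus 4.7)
The plan is to exploit the three-term short exact sequence relating $E(\BQ)/2E(\BQ)$, $\Sel_2(E/\BQ)$ and $\Sha(E/\BQ)[2]$, and then invoke the compatibility of the Cassels pairing on $\Sel_2'(E)$ with the Cassels--Tate pairing on $\Sha$. Under the hypothesis that there is no rational point of order $4$, we have $E(\BQ)_\tors = E(\BQ)[2] \oplus E(\BQ)[\text{odd}]$ with $E(\BQ)_\tors/2E(\BQ)_\tors = E(\BQ)[2] \cong (\BZ/2\BZ)^2$, so $E(\BQ)/2E(\BQ) \cong (\BZ/2\BZ)^{r+2}$ where $r = \rank_\BZ E(\BQ)$. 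Quotienting the exact sequence
\[0 \to E(\BQ)/2E(\BQ) \to \Sel_2(E/\BQ) \to \Sha(E/\BQ)[2] \to 0\]
by $E(\BQ)[2]$ gives
\[0 \to (\BZ/2\BZ)^r \to \Sel_2'(E) \to \Sha(E/\BQ)[2] \to 0,\]
so $\dim_{\BF_2} \Sel_2'(E) = r + \dim_{\BF_2} \Sha(E/\BQ)[2]$.

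The next step is to recall from Cassels~\cite{Cassels1998} that the pairing $\pair{-,-}$ on $\Sel_2'(E)$ is the pullback of the Cassels--Tate pairing on $\Sha(E/\BQ)[2]$ via the surjection above. In particular the image of $(\BZ/2\BZ)^r$ lies in its left and right kernel, and the induced pairing on $\Sha(E/\BQ)[2]$ has kernel exactly $2\Sha(E/\BQ)[4]$. These two facts together describe the kernel of $\pair{-,-}$ completely.

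For $(1)\Rightarrow(2)$: assume $r=0$ and $\Sha(E/\BQ)[2^\infty]\cong(\BZ/2\BZ)^{2t}$. Then $\Sha(E/\BQ)[4]=\Sha(E/\BQ)[2]$, so $2\Sha(E/\BQ)[4]=0$; moreover the exact sequence collapses to $\Sel_2'(E)\cong\Sha(E/\BQ)[2]$, which has dimension $2t$, and the Cassels pairing is identified with the Cassels--Tate pairing on $\Sha(E/\BQ)[2]$, which is non-degenerate under this assumption. For $(2)\Rightarrow(1)$: non-degeneracy of $\pair{-,-}$ on $\Sel_2'(E)$ forces the kernel-containing summand $(\BZ/2\BZ)^r$ to vanish, so $r=0$; it also forces $2\Sha(E/\BQ)[4]=0$, i.e.\ $\Sha(E/\BQ)[4]=\Sha(E/\BQ)[2]$. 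An easy induction then shows $\Sha(E/\BQ)[2^\infty]=\Sha(E/\BQ)[2]$: if $x\in\Sha(E/\BQ)[2^{n+1}]$ with $n\ge1$, then $2x\in\Sha(E/\BQ)[2^n]=\Sha(E/\BQ)[2]$, hence $4x=0$ and so $x\in\Sha(E/\BQ)[4]=\Sha(E/\BQ)[2]$. Combining gives $\Sha(E/\BQ)[2^\infty]\cong(\BZ/2\BZ)^{2t}$.

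The main obstacle is the second step: rigorously identifying the ad hoc Cassels pairing defined via tangent planes in~\eqref{eq:D_Lambda} with the (pullback of the) Cassels--Tate pairing on $\Sha[2]$, and in particular pinning down the kernel as $(\BZ/2\BZ)^r$ together with $2\Sha[4]$. This identification is not something one re-derives on the spot; it is precisely the content of \cite{Cassels1998} and is what one would cite. Once that black box is in hand, the two implications reduce to the purely formal linear-algebra and torsion bookkeeping above.
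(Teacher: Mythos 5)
The paper does not actually prove this lemma --- it is quoted verbatim from \cite[p.~2157]{Wang2016} and used as a black box --- so there is no in-paper argument to compare against; I can only assess your sketch on its own terms. Your outline is the standard and essentially correct one: the dimension count $\dim_{\BF_2}\Sel_2'(E)=r+\dim_{\BF_2}\Sha(E/\BQ)[2]$ is right, and the reduction of both implications to a description of the kernel of the Cassels pairing is exactly how the cited source proceeds. One point deserves more care, though. You describe the kernel as ``the image of $(\BZ/2\BZ)^r$ together with $2\Sha(E/\BQ)[4]$'' by appealing to the Cassels--Tate pairing on $\Sha[2]$ having kernel exactly $2\Sha[4]$; as stated this presupposes non-degeneracy of the Cassels--Tate pairing on $\Sha$, which holds unconditionally only modulo the maximal divisible subgroup, so in the direction $(2)\Rightarrow(1)$ you must also observe that a non-trivial $2$-divisible part of $\Sha$ would itself contribute to the kernel of the pairing on $\Sha[2]$ and is therefore excluded by non-degeneracy (after which $\Sha[2^\infty]$ is finite and your induction applies). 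The cleaner, fully unconditional form of the black box is Cassels' second-descent theorem: the kernel of the pairing on $\Sel_2(E/\BQ)$ is precisely the image of $\Sel_4(E/\BQ)\ra\Sel_2(E/\BQ)$, which sits in an exact sequence with sub $E(\BQ)/2E(\BQ)$ and quotient $2\Sha(E/\BQ)[4]$; substituting this for your ``pullback of Cassels--Tate'' step makes the argument airtight and is, as far as I can tell, what \cite{Wang2016} does. With that substitution your proof is correct.
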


\subsection{Homogeneous spaces}
Let's consider the quadratic twist $E^{(n)}$.
The homogeneous space $D_\Lambda^{(n)}$ associated to $\Lambda=(d_1,d_2,d_3)$ is
\[\begin{cases}
	H_1:& e_1nt^2+d_2u_2^2-d_3u_3^2=0, \\
	H_2:& e_2nt^2+d_3u_3^2-d_1u_1^2=0, \\
	H_3:& e_3nt^2+d_1u_1^2-d_2u_2^2=0.
\end{cases}\]
By classical descent theory, if $p\nmid 2e_1e_2e_3n$, then $D_\Lambda^{(n)}(\BQ_p)$ is non-empty if and only if $p\nmid d_1d_2d_3$, see \cite[Theorem~X.1.1, Corollary~X.4.4]{Silverman2009}.
Hence we may assume that $d_1,d_2,d_3$ are square-free divisors of $2e_1e_2e_3n$ from now on.

\begin{lemma}\label{lem:local-solv-R}
Let $\Lambda=(d_1,d_2,d_3)$.
Then $D_\Lambda^{(n)}(\BR)\neq\emptyset$ if and only if
\begin{itemize}
\item $d_1>0$, if $e_2>0,e_3<0$;
\item $d_2>0$, if $e_3>0,e_1<0$;
\item $d_3>0$, if $e_1>0,e_2<0$.
\end{itemize}
\end{lemma}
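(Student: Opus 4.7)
The plan is to start by noting that, since $d_1d_2d_3$ is a rational square, it is strictly positive over $\BR$, so either all three $d_i$ are positive or exactly two are negative. Combined with $e_1+e_2+e_3=0$ (which precludes a uniform sign on the $e_i$), every admissible sign pattern for $(e_1,e_2,e_3)$ triggers exactly one of the three bulleted conditions in the statement, and these three conditions are cyclic rotations of each other under $(e_1,e_2,e_3,d_1,d_2,d_3)\mapsto(e_2,e_3,e_1,d_2,d_3,d_1)$. This rotation matches the isomorphism $\mse_{e_1,e_2}\cong\mse_{e_2,e_3}$ (coming from $x\mapsto x-e_2$) and the induced cyclic permutation on the homogeneous spaces $D_\Lambda^{(n)}$, so I would reduce to the single case $e_2>0$, $e_3<0$, where the claim becomes: $D_\Lambda^{(n)}(\BR)\neq\emptyset$ if and only if $d_1>0$.

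For \emph{necessity}, suppose $(t,u_1,u_2,u_3)$ is a nonzero real solution with $d_1<0$. Rewriting $H_2$ as $d_1u_1^2=e_2nt^2+d_3u_3^2$ and $H_3$ as $d_2u_2^2=e_3nt^2+d_1u_1^2$, the left-hand side of $H_2$ is non-positive while $e_2nt^2\geq 0$, so $d_3u_3^2\leq 0$; similarly $d_2u_2^2\leq 0$ since $e_3n<0$ and $d_1u_1^2\leq 0$. Because $d_1d_2d_3>0$ and $d_1<0$, exactly one of $d_2,d_3$ is positive, and the corresponding variable $u_j$ must vanish. A brief propagation through the remaining equation, using $e_2n>0$ and $-e_3n>0$, then forces $t$ and the other $u_j$ to vanish as well, contradicting non-triviality.

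For \emph{sufficiency}, assume $d_1>0$, so $d_2d_3>0$. If all $d_i>0$, the point $(t,u_1,u_2,u_3)=(0,1/\sqrt{d_1},1/\sqrt{d_2},1/\sqrt{d_3})$ satisfies all three $H_i$ by inspection. If $d_2,d_3<0$, the ansatz $u_1=0$, $t=1$ works: $H_2$ yields $u_3^2=e_2n/|d_3|>0$ and $H_3$ yields $u_2^2=|e_3|n/|d_2|>0$, while the third equation $H_1$ follows automatically from the linear relation $H_1+H_2+H_3=0$, which holds because $e_1+e_2+e_3=0$.

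The argument is not conceptually hard; the main obstacle is a careful sign analysis in the necessity direction, where one must exploit the square constraint on $d_1d_2d_3$ jointly with the sign data of the $e_i$. The two simplifying observations that keep the case analysis short are the reduction from three equations to two via the linear relation $H_1+H_2+H_3=0$, and the cyclic symmetry arising from $\mse_{e_1,e_2}\cong\mse_{e_2,e_3}\cong\mse_{e_3,e_1}$.
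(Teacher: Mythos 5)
Your argument is correct, and it is essentially the argument the paper intends: the paper gives no details here, saying only that the proof is ``similar to [WangZhang2022, Lemma~3.1(4)]\ldots easy to get,'' and the elementary sign analysis you carry out (using $d_1d_2d_3\in\BQ^{\times2}$, the relation $H_1+H_2+H_3=0$, and the cyclic symmetry of the system under $(e_i,d_i,u_i)\mapsto(e_{i+1},d_{i+1},u_{i+1})$) is exactly the expected filling-in. One small remark: the cyclic reduction needs no appeal to the isomorphism $\mse_{e_1,e_2}\cong\mse_{e_2,e_3}$, since the three equations of $D_\Lambda^{(n)}$ are literally permuted by that rotation; and your two representative sign patterns $(+,+,-)$ and $(-,+,-)$ under $e_2>0$, $e_3<0$ do generate all six admissible patterns under rotation, so the reduction is complete.
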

\begin{proof}
The proof is similar to \cite[Lemma~3.1(4)]{WangZhang2022}, which is easy to get.
\end{proof}

\begin{lemma}\label{lem:local-solv-p}
Let $\Lambda=(d_1,d_2,d_3)$ with square-free $d_i$.
Let $n$ be a positive square-free integer coprime with $e_1e_2e_3$ and $p$ an odd prime factor of $n$.
Then $D_\Lambda^{(n)}(\BQ_p)\neq\emptyset$ if and only if
\begin{itemize}
\item $\leg{d_1}p=\leg{d_2}p=\leg{d_3}p=1$, if $p\nmid d_1d_2d_3$;
\item $\leg{-e_2e_3d_1}p=\leg{e_3n/d_2}p=\leg{-e_2 n/d_3}p=1$, if $p\nmid d_1,p\mid d_2,p\mid d_3$;
\item $\leg{-e_3n/d_1}p=\leg{-e_3e_1d_2}p=\leg{e_1 n/d_3}p=1$, if $p\mid d_1,p\nmid d_2,p\mid d_3$;
\item $\leg{e_2n/d_1}p=\leg{-e_1n/d_2}p=\leg{-e_1e_2d_3}p=1$, if $p\mid d_1,p\mid d_2,p\nmid d_3$.
\end{itemize}
\end{lemma}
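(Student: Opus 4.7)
The plan is to perform a $p$-adic valuation analysis on the three defining equations and then reduce modulo $p$. Write $n=pm$ with $p\nmid m$. Since each $d_i$ is square-free and $d_1d_2d_3\in\BQ^{\times 2}$, the number of $d_i$ divisible by $p$ is $0$ or $2$, which accounts exactly for the four cases in the statement. For any putative $\BQ_p$-point I would scale projective coordinates so that $\min\set{v_p(t),v_p(u_1),v_p(u_2),v_p(u_3)}=0$ and then study the valuation of each term of $H_1,H_2,H_3$.

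In Case~$1$ ($p\nmid d_1d_2d_3$), each term $e_ient^2$ has odd $p$-valuation $1+2v_p(t)$ while each $d_ju_j^2$ has even $p$-valuation $2v_p(u_j)$. For the three terms of $H_i$ to sum to zero in $\BQ_p$, exactly two of them must tie for the minimum valuation, and since odd cannot equal even, the two tying terms must be the two $d_ju_j^2$ terms. Propagating this through all three $H_i$ yields $v_p(u_1)=v_p(u_2)=v_p(u_3)=0\le v_p(t)$. Reducing each $H_i\bmod p$ then gives $d_au_a^2\equiv d_bu_b^2\pmod p$ with $u_a,u_b$ units, so every ratio $d_a/d_b$ is a square modulo $p$; combined with $d_1d_2d_3\in\BQ^{\times 2}$ this is equivalent to $\leg{d_1}p=\leg{d_2}p=\leg{d_3}p=1$. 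Conversely, if all $d_i$ are squares modulo $p$, Hensel's lemma produces $\sqrt{d_i}\in\BZ_p^\times$ and the point $(t,u_1,u_2,u_3)=(0,1/\sqrt{d_1},1/\sqrt{d_2},1/\sqrt{d_3})$ explicitly lies in $D_\Lambda^{(n)}(\BQ_p)$.

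In Case~$2$ ($p\nmid d_1$, $p\mid d_2$, $p\mid d_3$), write $d_2=pd_2'$ and $d_3=pd_3'$, so $d_1d_2'd_3'\in\BQ^{\times 2}$. In $H_2$ the three terms have $p$-valuations $1+2v_p(t)$, $1+2v_p(u_3)$, $2v_p(u_1)$ (odd, odd, even); the sum-to-zero constraint plus the analogous one for $H_3$ forces $v_p(t)=v_p(u_2)=v_p(u_3)=0$ and $v_p(u_1)\ge 1$ after scaling. Dividing $H_2$ and $H_3$ by $p$ and reducing modulo $p$ gives
\[e_2mt^2+d_3'u_3^2\equiv 0,\qquad e_3mt^2-d_2'u_2^2\equiv 0\pmod p,\]
which are precisely $\leg{-e_2n/d_3}p=1$ and $\leg{e_3n/d_2}p=1$ (after absorbing squares). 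The third condition $\leg{-e_2e_3d_1}p=1$ then follows from these two together with $d_1d_2'd_3'\in\BQ^{\times 2}$ and $e_1+e_2+e_3=0$. Conversely, given the three conditions, I would pick any $t_0\in\BZ_p^\times$, use Hensel to obtain $u_2^0,u_3^0\in\BZ_p^\times$ solving the two binary equations above, and set $u_1=0$; the resulting $(t_0,0,u_2^0,u_3^0)$ lies on $D_\Lambda^{(n)}$, since $H_1=-H_2-H_3$ is automatic. Cases~$3$ and~$4$ follow by the cyclic symmetry $(e_i,d_i,u_i)\mapsto(e_{i+1},d_{i+1},u_{i+1})$.

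The main obstacle is the careful bookkeeping of $p$-adic valuations and the translation between the internal parametrization $d_i=pd_i'$ and the Legendre symbols of the form $\leg{\cdot\,n/d_i}p$ appearing in the statement. A convenient shortcut is that the explicit choice $u_1=0$ in Case~$2$ sidesteps any need for Hensel lifting at a potentially singular point on the mod-$p$ reduction of $D_\Lambda^{(n)}$.
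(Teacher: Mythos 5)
Your proof is correct, and for the unramified case ($p\nmid d_1d_2d_3$) it is essentially the paper's argument: necessity via solvability of each conic $H_i$ (you phrase this as a valuation/parity analysis, the paper as a Hilbert-symbol condition, but these are the same computation for odd $p$), and sufficiency via the explicit point $(0,1/\sqrt{d_1},1/\sqrt{d_2},1/\sqrt{d_3})$. Where you genuinely diverge is in the mixed case $p\nmid d_1,\ p\mid d_2,\ p\mid d_3$: the paper reduces it to the first case by replacing $\Lambda$ with $\Lambda\cdot\delta(T)$ for a $2$-torsion point $T$ (here $\delta(-e_2n,0)=(-e_2e_3,e_3n,-e_2n)$, which clears the $p$ from the second and third slots and produces exactly the symbols $\leg{-e_2e_3d_1}p$, $\leg{e_3n/d_2}p$, $\leg{-e_2n/d_3}p$ of the statement), using that $D_\Lambda$ and $D_{\Lambda\cdot\delta(T)}$ are simultaneously locally solvable; in fact the paper's displayed definition of $\Lambda'$ is truncated in the source, so the reduction is only sketched. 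You instead argue directly: the parity of $p$-adic valuations forces $v_p(t)=v_p(u_2)=v_p(u_3)=0$ and $v_p(u_1)\ge1$, reduction of $H_2/p$ and $H_3/p$ gives two of the three symbol conditions, the third is redundant given $d_1d_2d_3\in\BQ^{\times2}$, and sufficiency follows from the explicit point with $u_1=0$ (using $H_1=-H_2-H_3$). This is self-contained and avoids invoking the torsor identification, at the cost of redoing the valuation bookkeeping; both routes are sound, and your closing symmetry remark disposes of the remaining two cases exactly as the paper does.
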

\begin{proof}
Assume that $p\nmid d_1, p\nmid d_2, p\nmid d_3$.
If $D_\Lambda^{(n)}(\BQ_p)\neq\emptyset$, then each $H_i(\BQ_p)\neq\emptyset$ and $\leg{d_2d_3}p=\leg{d_1d_3}p=\leg{d_1d_2}p=1$.
That's to say, $\leg{d_1}p=\leg{d_2}p=\leg{d_3}p=1$.
Conversely, if $\leg{d_1}p=\leg{d_2}p=\leg{d_3}p=1$, then $(0,\sqrt{1/d_1},\sqrt{1/d_2},\sqrt{1/d_3})\in D_\Lambda^{(n)}(\BQ_p)$.

Assume that $p\nmid d_1, p\mid d_2, p\mid d_3$.
Then $D_\Lambda^{(n)}(\BQ_p)\neq\emptyset$ if and only if $D_{\Lambda'}^{(n)}(\BQ_p)\neq\emptyset$, where 
Hence this case can be reduced to the case $p\nmid d_1d_2d_3$.
The rest cases can be obtained by symmetry.
\end{proof}

Let $n=p_1\cdots p_k$ be a prime decomposition of $n$.
For $\Lambda=(d_1,d_2,d_3)$ with square-free $d_i\mid 2e_1e_2e_3n$, denote by
\begin{equation}\label{eq:xyz}
x_i=v_{p_i}(d_1),\quad y_i=v_{p_i}(d_2), \quad z_i=v_{p_i}(d_3).\end{equation}
Then $\bfx+\bfy+\bfz={\bf0}$, where
\[\bfx=(x_1,\dots,x_k)^\rmT,\quad
\bfy=(y_1,\dots,y_k)^\rmT,\quad
\bfz=(z_1,\dots,z_k)^\rmT\in\BF_2^k.\]
Write
\begin{equation}\label{eq:case-general}
\begin{split}
d_1&=p_1^{x_1}\cdots p_k^{x_k}\cdot \wt d_1,\\
d_2&=p_1^{y_1}\cdots p_k^{y_k}\cdot \wt d_2,\\
d_3&=p_1^{z_1}\cdots p_k^{z_k}\cdot \wt d_3.
\end{split}
\end{equation}
Then $\wt d_1\wt d_2\wt d_3\in\BQ^{\times2}$.

Denote by 
\begin{equation}\label{eq:A-n}
\bfA=\bfA_n=\bigl([p_j,-n]_{p_i}\bigr)_{i,j}\in M_k(\BF_2)
\end{equation}
and
\begin{equation}\label{eq:D-u}
\bfD_u=\diag\set{\aleg u{p_1},\cdots,\aleg u{p_k}}\in M_k(\BF_2).
\end{equation}

\begin{theorem}\label{thm:selmer-general}
Let $n$ be an odd positive square-free integer coprime with $e_1e_2e_3$, whose prime factors are quadratic residues modulo each odd prime factor of $e_1e_2e_3$.
Assume that 
\begin{itemize}
\item both $E$ and $E^{(n)}$ have no rational point of order $4$;
\item every prime factor of $n$ is congruent to $1$ modulo $8$.
\end{itemize}
If $\Sel_2(E/\BQ)\cong(\BZ/2\BZ)^2$, then the map $(d_1,d_2,d_3)\mapsto \svec{\bfx}{\bfy}$ induces an isomorphism
\[\Sel_2'\bigl(E^{(n)}\bigr)\simto \Ker\begin{pmatrix}
\bfA&\\&\bfA
\end{pmatrix},\]
where $0<d_i\mid n$.
\end{theorem}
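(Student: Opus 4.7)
The plan is to exhibit the claimed isomorphism by (a) producing, for every class of $\Sel_2'(E^{(n)})$, a unique representative $(d_1,d_2,d_3)$ with $0<d_i\mid n$, (b) verifying that local solvability at the places dividing $2e_1e_2e_3\infty$ is automatic for such representatives, and (c) translating the local conditions at each $p_i\mid n$ into the linear equations $\bfA\bfx=\bfA\bfy={\bf0}$. For (b), the hypotheses on the primes of $n$ force every positive divisor of $n$ to be a square in $\BQ_v^\times$ at each $v\in\set{\infty,2}\cup\set{q\text{ odd}:q\mid e_1e_2e_3}$: indeed, $p_i>0$ and $p_i\equiv1\bmod 8$ handle $v=\infty$ and $v=2$, while quadratic reciprocity (symmetric since $p_i\equiv1\bmod 4$) together with Hensel's lemma converts the residue hypothesis on the $p_i$ into $p_i\in\BQ_q^{\times2}$ for odd $q\mid e_1e_2e_3$. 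Consequently, for $0<d_i\mid n$ the curve $D_\Lambda^{(n)}$ is $\BQ_v$-isomorphic to $D_{(1,1,1)}$ at each such $v$, and inherits solvability from the origin of $E^{(n)}$.

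For (a), I would write an arbitrary $\Lambda=(d_1,d_2,d_3)\in\Sel_2(E^{(n)}/\BQ)$, normalized so that each $d_i$ is squarefree, as $d_i=\delta_im_i$ with $\delta_i$ a signed squarefree divisor of $2e_1e_2e_3$ (automatically coprime to $n$) and $0<m_i\mid n$. The squareness observations above imply that $D_\Lambda^{(n)}\cong D_{\Lambda_0}$ over $\BQ_v$ for every $v\mid 2e_1e_2e_3\infty$, where $\Lambda_0=(\delta_1,\delta_2,\delta_3)$; and at each $p_i\mid n$ every $\delta_j$ is a unit square, so $D_{\Lambda_0}(\BQ_{p_i})\neq\emptyset$ trivially. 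Hence $\Lambda_0\in\Sel_2(E/\BQ)=E(\BQ)[2]$ by the minimality hypothesis on $E$. The corresponding class in $E^{(n)}(\BQ)[2]\subset\Sel_2(E^{(n)})$ differs from $\Lambda_0$ by a triple whose entries lie in $\set{1,n}$ modulo $\BQ^{\times2}$; multiplying $\Lambda$ by this class cancels the $\delta_i$ while replacing some $m_j$ by $n/m_j$ (using $n\cdot m_j\equiv n/m_j\bmod\BQ^{\times2}$), yielding the desired representative. Uniqueness holds because a non-trivial class of $E^{(n)}(\BQ)[2]$ admits a representative with all entries positive divisors of $n$ precisely when one of the pairs in Lemma~\ref{lem:ono} (applied to $E$) consists of squares, which is ruled out by the no-$4$-torsion hypothesis.

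For (c), I would run through all four valuation patterns $(x_i,y_i,z_i)$ in Lemma~\ref{lem:local-solv-p} at each $p_i\mid n$. Each Legendre condition $\leg{\cdot}{p_i}=1$ splits multiplicatively, and every factor contributed by $-1$, $2$, or an odd prime of $e_1e_2e_3$ drops out by the squareness observation, leaving $\prod_j\leg{p_j}{p_i}^{x_j}=\prod_j\leg{p_j}{p_i}^{y_j}=1$. Additively these are $(\bfA\bfx)_i=(\bfA\bfy)_i=0$, once one confirms that $\bfA_{ii}=[p_i,-n]_{p_i}$ expands as $\sum_{l\neq i}[p_l/p_i]$ via $(p_i,-1)_{p_i}=(p_i,p_i)_{p_i}=1$, both following from $p_i\equiv1\bmod 4$. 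The analogous condition $\bfA\bfz={\bf0}$ is redundant since $\bfz=\bfx+\bfy$. The main obstacle of the whole argument is step (a): the delicate transfer of signs from $\delta_i$ to a class of $E^{(n)}(\BQ)[2]$ hinges on $p_i\equiv1\bmod 8$ at the prime $v=2$, where a weaker congruence on $n$ would leave a $2$-adic obstruction—exactly the obstruction that Theorems~\ref{thm:main-odd} and \ref{thm:main-even} must address by imposing further conditions on $e_1,e_2,e_3$.
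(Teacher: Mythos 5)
Your proposal is correct and follows essentially the same route as the paper: strip off the part of each $d_i$ supported on $2e_1e_2e_3$ (your $\Lambda_0$ is the paper's $\wt\Lambda$), use the squareness of the prime factors of $n$ at all $v\mid 2e_1e_2e_3\infty$ plus the minimality $\Sel_2(E/\BQ)=E(\BQ)[2]$ to absorb $\Lambda_0$ into the $2$-torsion of $E^{(n)}$ and obtain the unique representative with $0<d_i\mid n$, and then translate Lemma~\ref{lem:local-solv-p} into $\bfA\bfx=\bfA\bfy={\bf0}$ with the $\bfD_{\pm e_i}$ terms vanishing. Your extra details (the uniqueness via Lemma~\ref{lem:ono} and the diagonal-entry computation $\bfA_{ii}=\sum_{l\neq i}\aleg{p_l}{p_i}$) are accurate and merely make explicit what the paper leaves implicit.
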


\begin{proof}
Let $\Lambda=(d_1,d_2,d_3)$ with square-free $d_i\mid 2e_1e_2e_3n$ and denote by $\wt\Lambda=(\wt d_1,\wt d_2,\wt d_3)$.
Then $D_\Lambda^{(n)}(\BR)\neq\emptyset$ if and only if $D_{\wt\Lambda}^{(1)}(\BR)\neq\emptyset$ by Lemma~\ref{lem:local-solv-R} and the fact $\sgn(\wt d_i)=\sgn(d_i)$.

If $q$ is a prime factor of $2e_1e_2e_3$, then $n, d_i/\wt d_i\in\BQ_q^{\times2}$.
Therefore, 
\[(t,u_1,u_2,u_3)\in D_\Lambda^{(n)}(\BQ_q)\iff\left(t\sqrt{n},u_1\sqrt{\frac{d_1}{\wt d_1}},u_2\sqrt{\frac{d_2}{\wt d_2}},u_3\sqrt{\frac{d_3}{\wt d_3}}\right)\in D_{\wt \Lambda}^{(1)}(\BQ_q).\]
Hence $\Lambda\in\Sel_2\bigl(E^{(n)}/\BQ\bigr)$ if and only if $\wt \Lambda\in\Sel_2(E/\BQ)$ and $D_\Lambda^{(n)}$ is locally solvable at each $p\mid n$.

If $\Lambda\in\Sel_2\bigl(E^{(n)}/\BQ\bigr)$, then $\wt\Lambda\in\Sel_2(E/\BQ)$.
By our assumptions, 
\[\wt\Lambda=(1,1,1),\ (-e_3,-e_1e_3,e_1),\ (-e_2e_3,e_3,-e_2)\ \text{or}\ (e_2,-e_1,-e_1e_2)\]
is $2$-torsion.
If $\wt\Lambda=(-e_3,-e_1e_3,e_1)$, then
\[\Lambda\cdot(-e_3n,-e_1e_3,e_1n)=\Bigl(\prod_{i=1}^k p_i^{1-x_i},\prod_{i=1}^k p_i^{y_i},\prod_{i=1}^k p_i^{1-z_i}\Bigr).\]
The other cases are similar.
Hence each element in $\Sel_2'\bigl(E^{(n)}\bigr)$ has a unique representative $(d_1,d_2,d_3)$ with $0<d_i\mid n$.
Based on this, we can express $\Sel_2'\bigl(E^{(n)}\bigr)$ in terms of linear algebra by Lemma~\ref{lem:local-solv-p} after a translation of languages:
\[\Sel_2'\bigl(E^{(n)}\bigr)\simto \bfM_n,
\quad\text{where}\quad
\bfM_n=\begin{pmatrix}
\bfA+\bfD_{-e_3}&\bfD_{-e_2e_3}\\
\bfD_{-e_1e_3}&\bfA+\bfD_{e_3}
\end{pmatrix}.\]
Since $\leg pq=1$ for any odd primes $p\mid n,q\mid e_1e_2e_3$ and $\leg{\pm1}p=\leg{\pm2}p=1$, we have $\leg{\pm e_i}{p}=1$.
Therefore, $\bfD_{\pm e_i}=\bfO$ and $\bfM_n=\diag\{\bfA,\bfA\}$.
\end{proof}

\subsection{The Cassels pairing}

Let $(a,b,c)$ be a primitive triple of odd integers satisfying
\[e_1a^2+e_2b^2+e_3c^2=0.\]
Denote by $\CE=\mse_{e_1a^2,e_2b^2}$ and $\CE^{(n)}=\mse_{e_1a^2n,e_2b^2n}$.

\begin{theorem}\label{thm:sha-general}
Let $n$ be an odd positive square-free integer coprime with $e_1e_2e_3abc$, whose prime factors are quadratic residues modulo each odd prime factor of $e_1e_2e_3abc$.
Assume that
\begin{itemize}
\item both $E$ and $E^{(n)}$ have no rational point of order $4$;
\item every prime factor of $n$ is congruent to $1$ modulo $8$.
\end{itemize}
If $\Sel_2(E/\BQ)\cong\Sel_2(\CE/\BQ)\cong(\BZ/2\BZ)^2$, then the following are equivalent:
\begin{enumerate}
\item $\rank_\BZ E^{(n)}(\BQ)=0$ and $\Sha\bigl(E^{(n)}/\BQ\bigr)[2^\infty]\cong(\BZ/2\BZ)^{2t}$;
\item $\rank_\BZ \CE^{(n)}(\BQ)=0$ and $\Sha\bigl(\CE^{(n)}/\BQ\bigr)[2^\infty]\cong(\BZ/2\BZ)^{2t}$.
\end{enumerate}
\end{theorem}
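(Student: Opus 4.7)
By Lemma~\ref{lem:non-deg}, each of the two conditions is equivalent to the corresponding pure $2$-Selmer group $\Sel_2'$ having dimension $2t$ with non-degenerate Cassels pairing. The plan is therefore to exhibit an isomorphism $\Sel_2'\bigl(E^{(n)}\bigr)\simto\Sel_2'\bigl(\CE^{(n)}\bigr)$ and to verify that it intertwines the two Cassels pairings. Once this is done, non-degeneracy of one pairing transfers to the other, and the equivalence of (1) and (2) follows immediately.

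First, I would establish the Selmer identification. The curve $\CE=\mse_{e_1a^2,e_2b^2}$ has parameters $e_1'=e_1a^2$, $e_2'=e_2b^2$, $e_3'=e_3c^2$, so $e_1'e_2'e_3'=e_1e_2e_3(abc)^2$ has the same odd prime factors as $e_1e_2e_3abc$. Thus the primes dividing $n$ are quadratic residues modulo the odd primes of $e_1'e_2'e_3'$, and the hypotheses of Theorem~\ref{thm:selmer-general} apply to $\CE$ as well as to $E$ (the no-order-$4$ hypothesis for $\CE^{(n)}$ follows from minimality of $\Sel_2(\CE/\BQ)$, Lemma~\ref{lem:ono}, and the fact that $n$ is coprime to $abc$). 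Applying Theorem~\ref{thm:selmer-general} to both curves produces isomorphisms
\[\Sel_2'\bigl(E^{(n)}\bigr)\simto\Ker\begin{pmatrix}\bfA&\\&\bfA\end{pmatrix}\simto\Sel_2'\bigl(\CE^{(n)}\bigr),\]
each given by choosing the representative $\Lambda=(d_1,d_2,d_3)$ with $0<d_i\mid n$ and sending it to $\svec{\bfx}{\bfy}$ as in \eqref{eq:xyz}. This pins down a canonical bijection $\phi:\Sel_2'(E^{(n)})\simto\Sel_2'(\CE^{(n)})$ sending $(d_1,d_2,d_3)$ to $(d_1,d_2,d_3)$.

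The substantive step is comparing the Cassels pairings $\pair{\Lambda,\Lambda'}_{E^{(n)}}$ and $\pair{\Lambda,\Lambda'}_{\CE^{(n)}}$ place by place for $\Lambda,\Lambda'$ in the above common set of representatives. The two homogeneous spaces $D_\Lambda^{E^{(n)}}$ and $D_\Lambda^{\CE^{(n)}}$ differ only in that the $t^2$-coefficients of $H_i$ are rescaled by $a^2,b^2,c^2$. At any odd prime $p\nmid 2abce_1e_2e_3n$ the hypotheses of Lemma~\ref{lem:cassels} hold for both curves simultaneously (after choosing integral $P_v$ and taking tangent planes $L_i$ at suitable $\BQ$-rational points on $H_i$), so both local pairings vanish there. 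At the remaining primes I would make compatible choices of $P=(P_v)$ and of auxiliary points $Q_i\in H_i(\BQ)$: for $\CE^{(n)}$ the triple $(a,b,c)$ together with the $2$-torsion points of $\CE$ provides canonical rational points on the $H_i'$. Substituting $t\mapsto t/a_i$ locally on each $H_i$ (with $a_1=a$, $a_2=b$, $a_3=c$) relates the two sets of linear forms up to factors involving $a,b,c$, and the resulting difference of local pairings can be made explicit as an additive-Hilbert-symbol expression in the $d_i,d_i',e_i,a,b,c,n$.

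The main obstacle, as signalled in the introduction, is showing that these differences cancel after summing over all places. I expect this to be carried out by invoking the two auxiliary results Lemmas~\ref{lem:special-residue} and \ref{lem:fund-equality}, which encode the necessary quadratic reciprocity and additive-Jacobi-symbol identities under the assumption that every prime factor of $n$ is $\equiv1\bmod 8$ and is a quadratic residue modulo each odd prime factor of $e_1e_2e_3abc$. The place $p=2$ is the most delicate: the $\bmod\,8$ hypothesis on prime factors of $n$ and the no-order-$4$ hypotheses on $E$ and $E^{(n)}$ are precisely what is needed so that the local contributions at $2$ for the two curves agree. Once the term-by-term cancellation is verified we conclude $\pair{\Lambda,\Lambda'}_{E^{(n)}}=\pair{\phi\Lambda,\phi\Lambda'}_{\CE^{(n)}}$, and Lemma~\ref{lem:non-deg} delivers the theorem.
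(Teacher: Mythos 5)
Your outline follows the paper's proof exactly in structure: reduce via Lemma~\ref{lem:non-deg}, identify the two pure $2$-Selmer groups through Theorem~\ref{thm:selmer-general} (whose matrix $\diag\{\bfA,\bfA\}$ is independent of $a,b,c$), and then compare the Cassels pairings place by place using Lemma~\ref{lem:cassels} away from $2e_1e_2e_3abcn\infty$ and Lemmas~\ref{lem:special-residue} and \ref{lem:fund-equality} at the remaining places. However, the step that actually constitutes the proof is left as an expectation rather than carried out. What is missing is the concrete recipe: normalize $a\equiv b\equiv c\equiv 1\bmod 4$; take primitive integer solutions $(\alpha_i,\beta_i,\gamma_i)$ of the conics attached to $E^{(n)}$ and set $\CQ_1=(\alpha_1,a\beta_1,a\gamma_1)$, $\CQ_2=(\alpha_2,b\beta_2,b\gamma_2)$, $\CQ_3=(\alpha_3,c\beta_3,c\gamma_3)$ with the corresponding tangent forms $\CL_i$; choose $\CP_p$ by rescaling the coordinates of $P_p$ by $a,b,c$ in the pattern dictated by which $d_i$ the prime $p$ divides. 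In the mixed case ($p\mid d_2d_3$, $p\nmid d_1$, say) the form $\CL_1(\CP_p)$ is not simply a multiple of $L_1(P_p)$; the identity of Lemma~\ref{lem:fund-equality}, applied with $x+y+z=0$ coming from the conic relation, gives $\CL_1(\CP_p)L_1(P_p)=\tfrac12(a+b)(b+c)(c+a)(\cdots)^2$, and Lemma~\ref{lem:special-residue} then shows this factor is a square times a quadratic residue mod $p$, so the two local symbols agree. Without exhibiting these choices and this computation, the claim that the "differences cancel" is unsupported.

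Two smaller points where your expectations diverge from what actually happens. First, in this general theorem the place $v=2$ is \emph{not} the delicate one: since every prime factor of $n$ is $\equiv 1\bmod 8$, each $d_i'$ (a positive divisor of $n$) is a square in $\BQ_2$, so both local pairings vanish at $2$ for the trivial reason $[\ast,d_i']_2=0$; the place $2$ is handled together with all $v\mid 2e_1e_2e_3abc$, and the genuine work is at the primes $p\mid n$. (The delicate place-$2$ analysis you describe belongs to Theorems~\ref{thm:sha-odd} and \ref{thm:sha-even}.) Second, the equality of the two pairings is established \emph{locally at each place}, term by term; there is no global cancellation after summing over all places, so no product-formula argument is needed. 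Also, the no-order-$4$ statement for $\CE$ and $\CE^{(n)}$ does not come from minimality of the Selmer group: it follows directly from Lemma~\ref{lem:ono}, since $(-e_ia_i^2,e_{i+1}a_{i+1}^2)$ consists of squares if and only if $(-e_i,e_{i+1})$ does.
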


\begin{lemma}\label{lem:special-residue}
Assume that all prime factors of $n$ are nonzero quadratic residues modulo each odd prime factor of $e_1e_2e_3$.
If $a\equiv b\equiv c\equiv 1\bmod 4$, then 
\[\frac18(a+b)(b+c)(c+a)\equiv1\bmod 4\]
is a quadratic residue modulo each prime factor of $n$.
\end{lemma}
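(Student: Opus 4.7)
The plan is first to verify the mod-$4$ congruence by direct expansion, and then to deduce the Legendre-symbol claim by combining the Hilbert-symbol product formula with a parity argument on the three auxiliary linear forms
\[
P=e_1a-e_2b+e_3c,\qquad Q=-e_1a+e_2b+e_3c,\qquad R=e_1a+e_2b-e_3c.
\]
The mod-$4$ part is routine: writing $a=1+4a'$, $b=1+4b'$, $c=1+4c'$ makes each of $(a+b)/2,(b+c)/2,(c+a)/2$ odd, and expanding their product gives $1+4(a'+b'+c')\equiv 1\bmod 4$.

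For the residue claim, I would fix an odd prime $p\mid n$ and set $X=\frac18(a+b)(b+c)(c+a)$. The case $p\mid X$ is trivial, so I assume $p\nmid X$ and apply the Hilbert-symbol product formula to get
\[
\leg{X}{p}=(X,p)_p=\prod_{v\neq p}(X,p)_v.
\]
A routine local calculation gives $(X,p)_\infty=1$ (as $p>0$), $(X,p)_2=1$ (as $X\equiv 1\bmod 4$ with $p$ odd), and $(X,p)_q=\leg{p}{q}^{v_q(X)}$ for every odd prime $q\neq p$. Hence it suffices to show the product $\prod_{q\neq p,\,q\text{ odd}}\leg{p}{q}^{v_q(X)}$ equals $1$.

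The crux is to prove $v_q(X)$ is even for every odd $q\nmid e_1e_2e_3$: once this is in hand, only primes $q\mid e_1e_2e_3$ can contribute nontrivially, and those contributions are $+1$ by the quadratic-residue hypothesis on $n$. To control these valuations I would use the identities
\[
PQ=e_1e_2(a+b)^2,\quad QR=e_1e_3(c+a)^2,\quad PR=e_2e_3(b+c)^2,\quad PQR=-8\,e_1e_2e_3\,X,
\]
each verifiable by direct expansion from $e_1a^2+e_2b^2+e_3c^2=0$ and $e_1+e_2+e_3=0$, together with the linear relations $P+Q=2e_3c$, $Q+R=2e_2b$, $P+R=2e_1a$. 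For odd $q\nmid e_1e_2e_3$, the square identities force $v_q(P),v_q(Q),v_q(R)$ to share a common parity; the linear relations show that if all three were positive then $q\mid a,b,c$, contradicting the primitivity of $(a,b,c)$. Hence the common parity is even, whence $v_q(X)=v_q(P)+v_q(Q)+v_q(R)$ is even. The main obstacle is this parity step via the auxiliary forms $P,Q,R$; once it is established, the product formula delivers $\leg{X}{p}=1$.
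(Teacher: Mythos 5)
Your proof is correct, and it takes a genuinely different route from the paper's. The paper parametrizes the conic $e_1a^2+e_2b^2+e_3c^2=0$ by setting $\beta/\alpha=e_1(a-c)/(e_2(b+c))$, derives explicit factorizations $\lambda(a+b)=2(\alpha-\beta)(e_1\alpha+e_2\beta)$, $\lambda(b+c)=2e_1\alpha(\alpha-\beta)$, $\lambda(c+a)=2\alpha(e_1\alpha+e_2\beta)$, hence writes $X=\frac18(a+b)(b+c)(c+a)=e_1\lambda\cdot(\text{rational square})$ with $e_1\lambda\equiv1\bmod4$, shows by a short case analysis that every prime factor of $\lambda$ divides $e_1e_2e_3$, and finishes with Jacobi reciprocity. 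You instead introduce the linear forms $P,Q,R$ and use the identities $PQ=e_1e_2(a+b)^2$, $QR=e_1e_3(c+a)^2$, $PR=e_2e_3(b+c)^2$ and $PQR=-8e_1e_2e_3X$ (I checked these against $e_1+e_2+e_3=0$ and $e_1a^2+e_2b^2+e_3c^2=0$; they are correct), which together with $P+Q=2e_3c$, $Q+R=2e_2b$, $P+R=2e_1a$ and the primitivity of $(a,b,c)$ force $v_q(X)$ to be even for every odd prime $q\nmid e_1e_2e_3$; the Hilbert product formula then plays the role that reciprocity plays in the paper. Both arguments share the same skeleton---show that the non-square part of $X$ is supported on $2e_1e_2e_3$ and then invoke the quadratic-residue hypothesis---but yours replaces the parametrization and the ``prime factors of $\lambda$ divide $e_1e_2e_3$'' analysis by purely polynomial identities and a valuation-parity argument, and your direct mod-$4$ computation of $X$ is independent of the parities of the $e_i$; the paper's version produces in addition an explicit square root of $X/(e_1\lambda)$, which is not needed for the lemma. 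One small point if you write this up: in the case $p\mid X$ (which you dismiss as trivial) your own parity argument shows $v_p(X)$ is even, and the identical product-formula computation applied to the $p$-unit part $X/p^{v_p(X)}$ gives the statement in the form actually used later, namely that $\bigl[2(a+b)(b+c)(c+a),d_i'\bigr]_p=0$.
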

\begin{proof}
Let $\alpha,\beta$ be coprime integers satisfying
\[\frac\beta\alpha=\frac{e_1(a-c)}{e_2(b+c)}.\]
Then $\alpha$ is odd and $\beta$ is even.
It's not hard to show that
\[\begin{split}
\lambda a&=e_1\alpha^2+2e_2\alpha\beta-e_2\beta^2\equiv e_1\bmod 4,\\
\lambda b&=e_1\alpha^2-2e_1\alpha\beta-e_2\beta^2\equiv e_1\bmod 4,\\
\lambda c&=e_1\alpha^2+e_2\beta^2\equiv e_1\bmod 4,
\end{split}\]
for some $\lambda\equiv e_1\bmod 4$.
Then
\[\begin{split}
\lambda(a+b)&=2(\alpha-\beta)(e_1\alpha+e_2\beta),\\
\lambda(b+c)&=2e_1\alpha(\alpha-\beta),\\
\lambda(c+a)&=2\alpha(e_1\alpha+e_2\beta)
\end{split}\]
and
\[\frac18(a+b)(b+c)(c+a)=e_1\lambda\bigl(\lambda^{-2}\alpha(\alpha-\beta)(e_1\alpha+e_2\beta)\bigr)^2\equiv1\bmod4.\]

Let $q$ be a prime factor of $\lambda$.
Then 
\[q\mid \gcd\bigl(\lambda(a+b),\lambda(a+c)\bigr)=2(e_1\alpha+e_2\beta).\]
If $q\nmid e_1$, then $q\mid \alpha(\alpha-\beta)$.
If $q\mid \alpha$, then $q\mid e_2\beta$, $q\mid e_2$;
if $q\mid(\alpha-\beta)$, then $q\mid e_2(\alpha-\beta)+(e_1\alpha+e_2\beta)=-e_3\alpha$, $q\mid e_3$.
Hence $q\mid e_1e_2e_3$.

Let $p$ be a prime factor of $n$.
Since $e_1\lambda\equiv1\bmod 4$ and $\leg{p}{q}=1$ for any odd prime $q\mid e_1e_2e_3$, we have
\[\leg{e_1\lambda}{p}=\leg{p}{e_1\lambda}=\prod_{q\mid e_1\lambda}\leg pq^{v_q(e_1\lambda)}=1.\]
Hence $(a+b)(b+c)(c+a)/8$ is a quadratic residue modulo $p$.
\end{proof}

\begin{lemma}\label{lem:fund-equality}
We have
\[\begin{split}
&(ax+by+cz)(x+y+z)-\half(a+b)(b+c)(c+a)\biggl(
\frac x{b+c}+
\frac y{c+a}+
\frac z{a+b}\biggr)^2\\
&\quad=\half(e_1a+e_2b+e_3c)\biggl(\frac{x^2}{e_1}+\frac{y^2}{e_2}+\frac{z^2}{e_3}\biggr).
\end{split}\]
\end{lemma}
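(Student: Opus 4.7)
The lemma is an algebraic identity in $a,b,c,e_1,e_2,e_3,x,y,z$ subject to the two linear relations $e_1+e_2+e_3=0$ and $e_1a^2+e_2b^2+e_3c^2=0$. My plan is to compare coefficients of the six monomials $x^2,y^2,z^2,xy,yz,zx$ on the two sides separately.

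The cross terms should drop out without using any constraints on the $e_i$. Expanding $(ax+by+cz)(x+y+z)$ produces $(a+b)xy+(b+c)yz+(c+a)zx$. On the other hand, in the second summand one has the cyclic simplifications
\[
\frac{(a+b)(b+c)(c+a)}{(b+c)(c+a)}=a+b,\qquad \frac{(a+b)(b+c)(c+a)}{(c+a)(a+b)}=b+c,\qquad \frac{(a+b)(b+c)(c+a)}{(a+b)(b+c)}=c+a,
\]
so expanding $-\tfrac12(a+b)(b+c)(c+a)\bigl(\tfrac{x}{b+c}+\tfrac{y}{c+a}+\tfrac{z}{a+b}\bigr)^2$ contributes $-(a+b)xy-(b+c)yz-(c+a)zx$ in the cross terms, and these cancel the cross terms from $(ax+by+cz)(x+y+z)$ identically.

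For the diagonal terms I would collapse the LHS $x^2$-coefficient directly:
\[
a-\frac{(a+b)(c+a)}{2(b+c)}=\frac{2a(b+c)-(a+b)(c+a)}{2(b+c)}=-\frac{(a-b)(a-c)}{2(b+c)}.
\]
The RHS $x^2$-coefficient is $\tfrac{e_1a+e_2b+e_3c}{2e_1}$, so matching reduces to the polynomial identity
\[
(b+c)(e_1a+e_2b+e_3c)+e_1(a-b)(a-c)=0.
\]
Expanding and replacing $e_1$ by $-(e_2+e_3)$ via the first constraint reduces the left side to $e_2(b^2-a^2)+e_3(c^2-a^2)$; this in turn vanishes upon substituting $e_1=-(e_2+e_3)$ into the second constraint $e_1a^2+e_2b^2+e_3c^2=0$. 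The $y^2$ and $z^2$ cases follow by the cyclic substitution $(a,b,c,e_1,e_2,e_3,x,y,z)\mapsto(b,c,a,e_2,e_3,e_1,y,z,x)$, which preserves the identity as well as both defining relations.

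The only real obstacle is bookkeeping; there is no conceptual hurdle. Denominators cause no issue as a formal identity over $\BQ(a,b,c,e_1,e_2,e_3)$ modulo the two relations, and in the applications above the hypotheses $e_i\neq0$ together with the specific shape of $(a,b,c)$ keep every denominator nonzero.
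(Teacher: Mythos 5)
Your proposal is correct and follows essentially the same route as the paper: compare coefficients of the monomials in $x,y,z$, observe that the cross terms cancel identically, verify the $x^2$-coefficient using the two relations $e_1+e_2+e_3=0$ and $e_1a^2+e_2b^2+e_3c^2=0$, and obtain the $y^2$ and $z^2$ cases by cyclic symmetry. Your write-up is merely a bit more explicit about the cross-term cancellation and the reduction of the diagonal identity to the two constraints.
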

\begin{proof}
The coefficient of $x^2$ on the left hand side is
\[\begin{split}
&a-\frac{(a+b)(a+c)}{2(b+c)}
=\frac{a(b+c)-bc-a^2}{2(b+c)}
=\frac{e_1a(b+c)-e_1bc-e_1a^2}{2e_1(b+c)}\\
=&\frac{e_1a(b+c)+(e_2+e_3)bc+e_2b^2+e_3c^2}{2e_1(b+c)}
=\frac{e_1a+e_2b+e_3c}{2e_1}
\end{split}\]
and the coefficient of $yz$ on the left hand side is zero.
The equality then follows by symmetry.
\end{proof}

\begin{proof}[Proof of Theorem~\ref{thm:sha-general}]
Since $E$ has no rational point of order $4$, none of $(-e_1,e_2)$, $(-e_2,e_3)$, $(-e_3,e_1)$ consists of squares by Lemma~\ref{lem:ono}.
Therefore, none of $(-e_1a^2,e_2b^2)$, $(-e_2b^2,e_3c^2)$, $(-e_3c^2,e_1a^2)$ consists of squares and $\CE$ has no rational point of order $4$.
Similarly, $\CE^{(n)}$ has no rational point of order $4$.

By choosing suitable signs, we may assume that $a\equiv b \equiv c\equiv 1\bmod 4$.
Since the matrix in Theorem~\ref{thm:selmer-general} does not depend on $a,b,c$, we have a canonical isomorphism
\[\Sel_2'\bigl(E^{(n)}\bigr)\cong\Sel_2'\bigl(\CE^{(n)}\bigr).\]
Let $\Lambda=(d_1,d_2,d_3),\Lambda'=(d_1',d_2',d_3')\in\Sel_2'\bigl(E^{(n)}\bigr)$ with $0<d_i,d_i'\mid n$.
We will denote by $D,H,Q,L,P$ the corresponding symbols for $E$ and $\CD,\CH,\CQ,\CL,\CP$ the corresponding symbols for $\CE$ in the calculation of Cassels pairing.
Then $\CD_\Lambda^{(n)}$ is defined as
\[\begin{cases}
	\CH_1:& e_1a^2nt^2+d_2u_2^2-d_3u_3^2=0, \\
	\CH_2:& e_2b^2nt^2+d_3u_3^2-d_1u_1^2=0, \\
	\CH_3:& e_3c^2nt^2+d_1u_1^2-d_2u_2^2=0.
\end{cases}\]
Let $(\alpha_i,\beta_i,\gamma_i)$ be primitive triples of integers satisfying
\[\begin{split}
e_1n\alpha_1^2+d_2\beta_1^2-d_3\gamma_1^2&=0, \\
e_2n\alpha_2^2+d_3\beta_2^2-d_1\gamma_2^2&=0, \\
e_3n\alpha_3^2+d_1\beta_3^2-d_2\gamma_3^2&=0.
\end{split}\]
Choose
\[\begin{aligned}
\CQ_1&=(\alpha_1,a\beta_1,a\gamma_1)\in \CH_1(\BQ),&
\CL_1&=e_1an\alpha_1t+d_2\beta_1u_2-d_3\gamma_1u_3,\\
\CQ_2&=(\alpha_2,b\beta_2,b\gamma_2)\in \CH_2(\BQ),&
\CL_2&=e_2bn\alpha_2t+d_3\beta_2u_3-d_1\gamma_2u_1,\\
\CQ_3&=(\alpha_3,c\beta_3,c\gamma_3)\in \CH_3(\BQ),&
\CL_3&=e_3cn\alpha_3t+d_1\beta_3u_1-d_2\gamma_3u_2.
\end{aligned}\]

(i) The case $v\mid 2e_1e_2e_3abc$.
Since each prime factor of $n$ is a square in $\BQ_v$, so is $d_i'$. Therefore, $[\CL_i(\CP_v),d_i']_v=0=[L_i(P_v),d_i']_v$.

(ii) The case $v=p\mid n$.
Since $a\equiv 1\bmod 4$ and $p$ is a quadratic residue modulo every odd prime factor $q$ of $abc$, we have 
\[[a,p]_p=\Bigl[\frac ap\Bigr]=\Bigl[\frac pa\Bigr]=\sum_{q\mid a}v_q(a)\Bigl[\frac pq\Bigr]=0.\]
Therefore $[a,d_i']_p=0$.
Similarly, $[b,d_i']_p=[c,d_i']_p=0$.

(ii-a) The case $p\nmid d_1d_2d_3$.
Take $\CP_p=(0,1/\sqrt{d_1},1/\sqrt{d_2},1/\sqrt{d_3})=P_p$.
Then
\[\CL_1(\CP_p)=\beta_1\sqrt{d_2}-\gamma_1\sqrt{d_3}=L_1(P_p).\]
Similarly, $\CL_2(\CP_p)=L_2(P_p)$ and $\CL_3(\CP_p)=L_3(P_p)$.

(ii-b) The case $p\nmid d_1, p\mid d_2, p\mid d_3$.
Then $e_3n/d_2,-e_2n/d_3\in\BQ_p^{\times2}$ by Lemma~\ref{lem:local-solv-p}.
Take $\CP_p=(1,0,cu,bv)$ where $u^2=e_3n/d_2,v^2=-e_2n/d_3$.
Then $P_p=(1,0,u,v)$ and
\[\begin{split}
\CL_1(\CP_p)&=ae_1n\alpha_1-bd_3\gamma_1v+cd_2\beta_1u,\\
\CL_2(\CP_p)&=be_2n\alpha_2+bd_3\beta_2v=bL_2(P_p),\\
\CL_3(\CP_p)&=ce_3n\alpha_3-cd_2\gamma_3u=cL_3(P_p).
\end{split}\]
Since
\[\frac{(e_1n\alpha_1)^2}{e_1}+
\frac{(-d_3\gamma_1v)^2}{e_2}+
\frac{(d_2\beta_1u)^2}{e_3}
=n(e_1n\alpha_1^2-d_3\gamma_1^2+d_2\beta_1^2)=0,\]
we have
\[\CL_1(\CP_p)L_1(P_p)=\half(a+b)(a+c)(b+c)\biggl(
\frac{e_1n\alpha_1}{b+c}+
\frac{d_2\beta_1u}{a+b}-
\frac{d_3\gamma_1v}{a+c}\biggr)^2\]
by Lemma~\ref{lem:fund-equality}.
Therefore, 
\[\begin{split}
[\CL_1(\CP_p),d_1']_p&=[L_1(P_p),d_1']_p+[2(a+b)(a+c)(b+c),d_1']_p=[L_1(P_p),d_1']_,\\
[\CL_2(\CP_p),d_2']_p&=[L_2(P_p),d_2']_p+[b,d_2']_p=[L_2(P_p),d_2']_p,\\
[\CL_3(\CP_p),d_3']_p&=[L_3(P_p),d_3']_p+[c,d_3']_p=[L_3(P_p),d_3']_p
\end{split}\]
by Lemma~\ref{lem:special-residue}.

(ii-c) The case $p\mid d_1, p\nmid d_2, p\mid d_3$, and the case $p\mid d_1, p\mid d_2, p\nmid d_3$ can be proved similarly by the symmetry of $e_i$.

\vspace{0.5em}
Now we have
\[\begin{split}
\pair{\Lambda,\Lambda'}_{\CE^{(n)}}
&=\sum_{v\mid 2e_1e_2e_3abcn\infty} \sum_{i=1}^3\bigl[\CL_i(\CP_v),d_i'\bigr]_v
=\sum_{p\mid n} \sum_{i=1}^3\bigl[\CL_i(\CP_p),d_i'\bigr]_p\\
&=\sum_{p\mid n} \sum_{i=1}^3\bigl[L_i(P_p),d_i'\bigr]_p
=\pair{\Lambda,\Lambda'}_{E^{(n)}}
\end{split}\]
by Lemma~\ref{lem:cassels}.
In other words, the Cassels pairings on $\Sel_2'\bigl(E^{(n)}\bigr)$ and $\Sel_2'\bigl(\CE^{(n)}\bigr)$ are same under the identity $\Sel_2'\bigl(E^{(n)}\bigr)\cong\Sel_2'\bigl(\CE^{(n)}\bigr)$.
Since both $E^(n)$ and $\CE^{(n)}$ have no rational point of order $4$, this theorem follows from Lemma~\ref{lem:non-deg}.
\end{proof}

\section{The odd case with \texorpdfstring{$2\mmid e_3$}{2||e_3}}

Assume that $e_1,e_2$ are odd and $2\mmid e_3$.
Let $n$ be an odd positive square-free integer.
Let $\Lambda=(d_1,d_2,d_3)$ where $d_1,d_2,d_3$ are square-free integers dividing $2e_1e_2e_3n$.

\subsection{Homogeneous spaces}

\begin{lemma}\label{lem:local-solv-odd}
If $D_\Lambda^{(n)}(\BQ_2)\neq\emptyset$, then $d_3$ is odd.
\end{lemma}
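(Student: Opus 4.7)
The plan is to argue by contradiction: suppose $d_3$ is even, so $v_2(d_3) = 1$. Since $d_1 d_2 d_3 \in \BQ^{\times 2}$ and each $d_i$ is square-free, $v_2(d_1) + v_2(d_2) + v_2(d_3)$ is even, so exactly one of $d_1, d_2$ has $v_2 = 1$ and the other is odd. I will fix a primitive solution $(t, u_1, u_2, u_3) \in \BZ_2^4$ to the system $D_\Lambda^{(n)}$ (at least one coordinate being a $2$-adic unit) and, in each of the two cases, use the equations $H_1, H_2, H_3$ reduced modulo suitable powers of $2$ to force all four coordinates to be even, contradicting primitivity. Throughout I use $v_2(e_1 n) = v_2(e_2 n) = 0$ and $v_2(e_3 n) = 1$.

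The easier case is $v_2(d_2) = 1$, $d_1$ odd. Here $H_1 \bmod 2$ forces $t$ even (both $d_2$- and $d_3$-terms are even); $H_2 \bmod 4$ forces $u_1$ even (the $t$-term vanishes and $d_3 u_3^2$ is even, so $d_1 u_1^2$ is even with $d_1$ odd); a valuation comparison in $H_3$ using $v_2(e_3 n t^2) \ge 3$ and $v_2(d_1 u_1^2) \ge 2$ against $v_2(d_2 u_2^2) = 1 + 2 v_2(u_2)$ then gives $u_2$ even; and the analogous comparison in $H_1$ gives $u_3$ even, a contradiction.

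The harder case is $v_2(d_1) = 1$, $d_2$ odd. Now $H_3 \bmod 2$ immediately gives $u_2$ even (both $e_3 n t^2$ and $d_1 u_1^2$ terms are even), and then $H_1 \bmod 2$ gives $t$ even. In $H_2$, the term $e_2 n t^2$ has even $2$-adic valuation $\ge 2$, while $d_3 u_3^2$ and $d_1 u_1^2$ have odd valuations $1 + 2 v_2(u_3)$ and $1 + 2 v_2(u_1)$; balancing forces $v_2(u_1) = v_2(u_3)$, and primitivity (with $t, u_2$ already even) forces this common value to be $0$, so $u_1, u_3$ are both odd. To close the case I will reduce $H_3$ modulo $8$: the term $e_3 n t^2$ vanishes since $t$ is even, $d_1 u_1^2 \equiv d_1 \pmod 8$ since $u_1^2 \equiv 1 \pmod 8$, and $d_2 u_2^2$ is either $\equiv 0$ or $\equiv 4 d_2 \pmod 8$ according as $v_2(u_2) \ge 2$ or $v_2(u_2) = 1$. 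Either way this forces $d_1 \equiv 0$ or $4 \pmod 8$, contradicting $v_2(d_1) = 1$.

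The step I expect to require the most care is this mod-$8$ endgame in the harder case, where one must track the residues of $u_1^2$ and $u_2^2$ together with the square-free constraint $v_2(d_1) = 1$; everything else is routine $2$-adic bookkeeping using only the valuations of $e_i n$ and $d_i$.
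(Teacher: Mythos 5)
Your proof is correct and follows essentially the same route as the paper: split into two cases according to which of $d_1,d_2$ shares the factor $2$ with $d_3$, take a primitive $2$-adic solution, and chase parities and $2$-adic valuations through $H_1,H_2,H_3$ until primitivity is contradicted. The only divergence is in your ``harder'' case $2\mid d_1$, $2\nmid d_2$: after getting $u_2$ and $t$ even, the paper simply continues the parity chain (via $H_1$ then $H_2$) to force $u_3$ and $u_1$ even as well, whereas you take a longer but still valid detour, using a valuation-balancing argument in $H_2$ to make $u_1,u_3$ odd and then closing with a mod-$8$ computation in $H_3$.
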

\begin{proof}
The proof is similar to \cite[Lemma~3.1(2)]{WangZhang2022}.
Since we are dealing with homogeneous spaces, we may assume that $t,u_1,u_2,u_3$ are $2$-adic integers and at least one of them is a $2$-adic unit.
Suppose that $D_\Lambda^{(n)}(\BQ_2)\neq\emptyset$.
If $2\mid d_1, 2\nmid d_2, 2\mid d_3$, then $u_2$ is even by $H_3$ and $t$ is even $H_2$.
Therefore, $u_3$ is even by $H_1$ and $u_1$ is even by $H_2$, which is impossible.
The case $2\nmid d_1, 2\mid d_2, 2\mid d_3$ is similar.
Hence $d_3$ is odd.
\end{proof}

Since the torsion $(-e_3n,-e_1e_3,e_1n)$ has $2$-adic valuation $(1,1,0)$, any element in the pure $2$-Selmer group  $\Sel_2'\bigl(E^{(n)}\bigr)$ has a representative $\Lambda=(d_1,d_2,d_3)$ with odd $d_i\mid e_1e_2e_3n$.

\begin{lemma}\label{lem:local-solv2-odd}
Let $\Lambda=(d_1,d_2,d_3)$ where $d_1,d_2,d_3$ are odd.
If $D_\Lambda^{(n)}$ is locally solvable at all places $v\neq 2$, then $D_\Lambda^{(n)}$ is also locally solvable at $v=2$.
\end{lemma}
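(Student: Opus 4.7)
My approach combines Hilbert reciprocity with an explicit $2$-adic construction, exploiting the oddness of $d_1,d_2,d_3$.

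First I would upgrade the hypothesis to $\BQ_2$-solvability of each individual conic. Each $H_i$ is a conic $\alpha X^2+\beta Y^2+\gamma Z^2=0$ whose solvability at $\BQ_v$ is equivalent to the single Hilbert symbol condition $(-\alpha\beta,-\alpha\gamma)_v=1$. Since by hypothesis $D_\Lambda^{(n)}(\BQ_v)\neq\emptyset$ for all $v\neq 2$ gives $H_i(\BQ_v)\neq\emptyset$ for all such $v$, the product formula $\prod_v(-\alpha\beta,-\alpha\gamma)_v=1$ forces $H_i(\BQ_2)\neq\emptyset$ for $i=1,2,3$.

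Second, I upgrade these individual solvabilities to a common $\BQ_2$-point by explicit construction modulo $8$. Because $d_1,d_2,d_3\in\BZ_2^\times$ and $v_2(e_3n)=1$, reducing the system modulo $2$ gives the linear conditions $u_1\equiv u_2$ and $t\equiv u_1+u_3\pmod 2$. With the ansatz ``$t,u_1,u_2$ odd and $u_3$ even'' (the only one compatible with these linear conditions given $v_2(e_3n)=1$), the three defining equations modulo $8$ become explicit congruences on the residues $d_i\bmod 8$ and $e_in\bmod 8$, together with a choice of parity for $u_3/2$. For instance $H_3\equiv d_1-d_2+e_3n\pmod 8$, while $H_1,H_2$ cut down to mod-$4$ conditions of the form $e_1n\equiv -d_2\pmod 4$ and $e_2n\equiv d_1\pmod 4$. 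Hensel's lemma, after possibly sharpening to a mod-$32$ solution to absorb the factor of $4$ in the Jacobian $\partial_{u_3}H_i=-2d_i u_3$, then lifts any consistent mod-$8$ solution to a genuine $\BQ_2$-point.

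The final step, which I expect to be the technical heart of the argument, is to verify that the needed mod-$8$ congruences are forced by local solvability at $v\neq 2$. The idea is to recognize each congruence as the value of a Hilbert symbol like $(d_i,d_j)_2$, $(d_i,e_jn)_2$, or $(d_i,-1)_2$, and apply Hilbert reciprocity $\prod_v(\cdot,\cdot)_v=1$: the contributions at $v\neq 2$ are controlled by the Jacobi-symbol conditions of Lemma~\ref{lem:local-solv-p} (together with the sign information from Lemma~\ref{lem:local-solv-R}) and vanish, so the symbol at $2$ vanishes as well, giving the required mod-$8$ data.

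\textbf{Main obstacle.} The delicate work is this last reciprocity bookkeeping: one must match each mod-$8$ obstruction with a product of symbols whose vanishing is guaranteed at $v\neq 2$. The oddness hypothesis $d_i\in\BZ_2^\times$ is essential, since it ensures the relevant symbols at $2$ depend only on the residues $d_i\bmod 8$ and no cross-terms $(d_i,2)_v$ with $v\mid d_i$ appear at odd places. The relation $d_1d_2d_3\in\BQ^{\times 2}$ supplies the one linear dependence among the residues that matches the redundancy $H_1+H_2+H_3=0$ among the three mod-$8$ congruences. This explains structurally why the even case (where some $d_i$ may have $v_2=1$) requires the additional hypotheses of the later lemmas.
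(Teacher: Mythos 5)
Your first step (each conic $H_i$ is soluble at every $v\neq 2$, hence at $2$ by the product formula, giving $[e_1nd_3,d_1]_2=[e_2nd_1,d_2]_2=[e_3nd_2,d_3]_2=0$) is exactly how the paper begins. The gap is in your second step. Reducing the system mod $2$ allows \emph{three} parity patterns for $(t,u_1,u_2,u_3)$, not one: besides your ansatz ($t,u_1,u_2$ odd, $u_3$ even) and the pattern ($t,u_3$ odd, $u_1,u_2$ even) that $v_2(e_3n)=1$ does rule out via $H_3$, there is the pattern ($t$ even, $u_1,u_2,u_3$ all odd), which is perfectly compatible and is in fact the one that must be used when $(d_1,d_2,d_3)\equiv(1,1,1)\bmod 4$. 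A concrete counterexample to your claim: $\Lambda=(1,1,1)$ is always in the Selmer group and $D_\Lambda^{(n)}$ has the point $(0,1,1,1)$, yet your ansatz forces $t$ odd and hence the mod-$4$ conditions $e_1n\equiv -d_2$ and $e_2n\equiv d_1\bmod 4$, i.e.\ $e_1n\equiv e_2n\equiv -1\bmod 4$; nothing in the hypotheses guarantees this, so your construction finds no point even though one exists. Your proposed reciprocity bookkeeping cannot rescue this, because the three Hilbert conditions above are already satisfied when $d_1\equiv d_2\equiv d_3\equiv 1\bmod 8$ and impose no constraint on $e_in\bmod 4$.

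The repair is a case split on $(d_1,d_2,d_3)\bmod 4$, which is what the paper does: when $(d_1,d_2,d_3)\equiv(1,1,1)\bmod 4$ the symbols force $d_3\equiv 1\bmod 8$ and $d_1\equiv d_2\bmod 8$, and one takes $t\in\set{0,2}$ with all $u_i$ odd; when $d_3\equiv-1\bmod 4$ one takes your ansatz with $u_3\in\set{0,2}$, the needed congruences $e_2n\equiv d_1$, $e_1n\equiv-d_2\bmod 4$ now really do follow from the vanishing symbols; and the remaining case $(d_1,d_2,d_3)\equiv(-1,-1,1)\bmod 4$ is shown to be impossible using $2\mmid e_3$. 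Your lifting remark also needs care: one should apply Hensel to a unit variable ($u_1$ or $u_2$, where $v_2(\partial H)=1$ so a solution mod $8$ suffices), not to $u_3$ in the case where $u_3$ is even. The structural observations at the end of your plan (the role of $d_1d_2d_3\in\BQ^{\times2}$ and of the oddness of the $d_i$) are sound, but as written the proof does not go through without the case analysis.
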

\begin{proof}
The proof is similar to \cite[Lemma~3.4]{WangZhang2022}.
Since $D_\Lambda^{(n)}(\BQ_v)\neq\emptyset$ for all places $v\neq 2$, each $H_i$ is locally solvable at $v\neq 2$.
By the product formula of Hilbert symbols, $H_i$ is also locally solvable at $2$.
In other words, 
\[[e_1nd_3,d_1]_2=[e_2nd_1,d_2]_2=[e_3nd_2,d_3]_2=0.\]

(i) If $(d_1,d_2,d_3)\equiv(1,1,1)\bmod4$, then $0=[e_3nd_2,d_3]_2=[2,d_3]_2$ and we have $d_3\equiv1\bmod 8$.
Therefore, $d_1\equiv d_2\bmod 8$.
If $d_1\equiv d_2\equiv1\bmod 8$, take
\[t=0,\ u_1=\sqrt{d_3/d_1},\ u_2=\sqrt{d_3/d_2},\ u_3=1.\]
If $d_1\equiv d_2\equiv5\bmod 8$, take
\[t=2,\ u_1=\sqrt{(d_3+4e_2n)/d_1},\ u_2=\sqrt{(d_3-4e_1n)/d_2},\ u_3=1.\]

(ii) If $(d_1,d_2,d_3)\equiv(-1,-1,1)\bmod4$, then $d_3\equiv 1\bmod 8$ similarly.
Since 
\[[e_1n,-1]_2=[e_1nd_3,d_1]_2=0=[e_2nd_1,d_2]_2=[-e_2n,-1]_2,\]
we have $e_1n\equiv -e_2n\equiv 1\bmod 4$.
This implies that $4\mid(e_1+e_2)=-e_3$, which is impossible.

(iii) If $d_3\equiv-1\bmod4$, then $[e_3nd_2,d_3]_2=0$, $e_3nd_2\equiv d_3+3\bmod8$ and
\[(d_1-e_2n)-(d_2+e_1n)=d_1-d_2+e_3n\equiv 2(d_1+d_2)\equiv 0\bmod 8.\]
If $(d_1,d_2,d_3)\equiv(1,-1,-1)\bmod4$, then $[e_2n,-1]_2=0$ and $e_2n\equiv d_1\bmod 4$.
If $(d_1,d_2,d_3)\equiv(-1,1,-1)\bmod4$, then $[-e_1n,-1]_2=0$ and $e_1n\equiv-d_2\bmod 4$.
If $d_2+e_1n\equiv d_1-e_2n\equiv 0\bmod 8$, take
\[t=1,\ u_1=\sqrt{e_2n/d_1},\ u_2=\sqrt{-e_1n/d_2},\ u_3=0.\]
If $d_2+e_1n\equiv d_1-e_2n\equiv 4\bmod 8$, take
\[t=1,\ u_1=\sqrt{(4d_3+e_2n)/d_1},\ u_2=\sqrt{(4d_3-e_1n)/d_2},\ u_3=2.\]

Hence $D_\Lambda^{(n)}$ is locally solvable at $v=2$.
\end{proof}

Let $\Lambda=(d_1,d_2,d_3)$ with odd square-free $d_i\mid e_1e_2e_3n$.
We will use the notations $\bfx,\bfy,\bfz,\wt d_i$ in \eqref{eq:xyz} and \eqref{eq:case-general}.

\begin{theorem}\label{thm:selmer-odd}
Let $n$ be an odd positive square-free integer coprime with $e_1e_2e_3$, whose prime factors are quadratic residues modulo each odd prime factor of $e_1e_2e_3$.
If $\Sel_2(E/\BQ)\cong(\BZ/2\BZ)^2$, then the map $(d_1,d_2,d_3)\mapsto \svec{\bfx}{\bfy}$ induces an isomorphism
\[\Sel_2'\bigl(E^{(n)}\bigr)\simto \Ker \begin{pmatrix}
\bfA+\bfD_{-e_3}&\bfD_{-e_2e_3}\\
\bfD_{-e_1e_3}&\bfA+\bfD_{e_3}
\end{pmatrix},\]
where $0<d_i\mid n$.
\end{theorem}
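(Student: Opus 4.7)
The plan is to adapt the strategy of Theorem~\ref{thm:selmer-general}, with two modifications: a preliminary normalization at the prime $2$ provided by Lemmas~\ref{lem:local-solv-odd} and~\ref{lem:local-solv2-odd}, and the retention of the correction matrices $\bfD_{\pm e_3},\bfD_{-e_1e_3},\bfD_{-e_2e_3}$ because the hypothesis $p\equiv 1\bmod 8$ of Theorem~\ref{thm:selmer-general} is no longer in force. First, by Lemma~\ref{lem:local-solv-odd} every $\Lambda$ whose $D_\Lambda^{(n)}$ is locally solvable at $2$ has $d_3$ odd. The four $2$-torsion classes of $\Sel_2(E^{(n)}/\BQ)$,
\[(1,1,1),\ (-e_3n,-e_1e_3,e_1n),\ (-e_2e_3,e_3n,-e_2n),\ (e_2n,-e_1n,-e_1e_2),\]
have $(v_2(d_1),v_2(d_2),v_2(d_3))$ equal to $(0,0,0),(1,1,0),(1,1,0),(0,0,0)$ respectively, so up to the pure Selmer equivalence one may assume $d_1,d_2,d_3$ are all odd; for such $\Lambda$, Lemma~\ref{lem:local-solv2-odd} then makes local solvability at $v=2$ automatic.

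Next, I would factor each $d_i$ as in \eqref{eq:case-general} with odd square-free $\wt d_i\mid e_1e_2e_3$. Because $\sgn d_i=\sgn\wt d_i$, local solvability at $\infty$ reduces to $\wt\Lambda$; because the quadratic residue hypothesis on the prime factors of $n$ forces $n,d_i/\wt d_i\in\BQ_q^{\times 2}$ at every odd $q\mid e_1e_2e_3$, local solvability at such $q$ also reduces to $\wt\Lambda$. Combined with the automatic solvability of $D_{\wt\Lambda}^{(1)}$ at the primes of $n$ (good reduction) and at $v=2$ (via Lemma~\ref{lem:local-solv2-odd} applied to $n=1$), this gives $\Lambda\in\Sel_2(E^{(n)}/\BQ)$ iff $\wt\Lambda\in\Sel_2(E/\BQ)$ and $D_\Lambda^{(n)}$ is locally solvable at each $p_i\mid n$. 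Since $\Sel_2(E/\BQ)\cong(\BZ/2\BZ)^2=E(\BQ)[2]$, $\wt\Lambda$ is $2$-torsion, and translating $\Lambda$ by the matching $2$-torsion of $E^{(n)}$ yields the unique representative with $\wt\Lambda=(1,1,1)$, i.e., $0<d_i\mid n$.

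Finally, at each $p_i\mid n$ (with $p_i\nmid e_1e_2e_3$), Lemma~\ref{lem:local-solv-p} splits into the four cases indexed by the pattern $(x_i,y_i,z_i)\in\BF_2^3$ subject to $x_i+y_i+z_i=0$, each yielding two independent Legendre-symbol conditions on $d_1,d_2,d_3$. Rewriting them additively with the help of \eqref{eq:A-n} and \eqref{eq:D-u}, one expects the two conditions to match the $i$-th rows of
\[(\bfA+\bfD_{-e_3})\bfx+\bfD_{-e_2e_3}\bfy={\bf0},\qquad \bfD_{-e_1e_3}\bfx+(\bfA+\bfD_{e_3})\bfy={\bf0},\]
so that $(d_1,d_2,d_3)\mapsto\svec{\bfx}{\bfy}$ induces the claimed $\BF_2$-linear bijection onto $\Ker\bfM_n$. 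The main obstacle is this uniformity check: without the $p\equiv 1\bmod 8$ simplification, the correction terms $\bfD_{\pm e_j}$ do not vanish, and one must verify carefully that the extra factors $\aleg{\pm e_j}{p_i}$ arising in each of the four patterns assemble into the same diagonal matrices $\bfD_{\pm e_3},\bfD_{-e_1e_3},\bfD_{-e_2e_3}$ regardless of which pattern is in force at $p_i$.
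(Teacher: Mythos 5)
Your proposal retraces the paper's proof step for step: the reduction to odd $d_i$ via Lemma~\ref{lem:local-solv-odd} and multiplication by a $2$-torsion class of valuation $(1,1,0)$; the reduction of the local conditions at $v=\infty$ and $v\mid 2e_1e_2e_3$ to the condition $\wt\Lambda\in\Sel_2(E/\BQ)$, with Lemma~\ref{lem:local-solv2-odd} making the place $2$ automatic on both sides; the normalization to $0<d_i\mid n$ using that $\wt\Lambda$ is $2$-torsion; and the translation of Lemma~\ref{lem:local-solv-p} into the block matrix. The ``uniformity check'' you flag at the end is exactly the step the paper also dismisses as ``a translation of languages''; it does go through (the correction terms $\aleg{\pm e_j}{p_i}$ come out the same in all four patterns because in each case only two of the three Legendre conditions are independent, the third being their product), so your caution there is reasonable but does not put you behind the paper.

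There is one concrete omission. Unlike Theorem~\ref{thm:selmer-general}, the present statement does not assume that $E$ and $E^{(n)}$ have no rational point of order $4$, so this must be derived, and your proposal never does. The paper opens its proof with exactly this: by Lemma~\ref{lem:ono}, since $2\mmid e_3$ neither $(-ne_2,ne_3)$ nor $(-ne_3,ne_1)$ can consist of squares, and if $(-ne_1,ne_2)$ did then $e_1\equiv-e_2\bmod 4$ would force $4\mid e_3$, a contradiction. This is not a cosmetic point: without it, the torsion class $(e_2n,-e_1n,-e_1e_2)$ could be trivial modulo squares, in which case $(d_1,d_2,d_3)$ and $(n/d_1,n/d_2,d_3)$ would represent the same element of $\Sel_2'\bigl(E^{(n)}\bigr)$, destroying both the uniqueness of your normalized representative and the injectivity of $(d_1,d_2,d_3)\mapsto\svec{\bfx}{\bfy}$; it is also needed to identify $\Sel_2'$ with the quotient by the four $2$-torsion classes in the first place. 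The fix is the one-line argument above, but it has to be said.
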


\begin{proof}
Since $e_1,e_2$ are odd and $2\mmid e_3$, neither $(-ne_2,ne_3)$ nor $(-ne_3,ne_1)$ consists of squares.
If $(-ne_1,ne_2)$ consists of squares, then $e_1\equiv -e_2\bmod 4$ and $4\mid e_3$, which is impossible.
Hence $E(\BQ)$ contains no point of order $4$ by Lemma~\ref{lem:ono}.

Let $\Lambda=(d_1,d_2,d_3)$ with odd square-free $d_i\mid e_1e_2e_3n$ and denote by $\wt \Lambda=(\wt d_1,\wt d_2,\wt d_3)$.
Similar to the proof of Theorem~\ref{thm:selmer-general},
$D_\Lambda^{(n)}(\BQ_v)\neq\emptyset$ if and only if $D_{\wt\Lambda}^{(1)}(\BQ_v)\neq\emptyset$ for $v=\infty$ or odd $v\mid e_1e_2e_3$.
Hence $\Lambda\in\Sel_2\bigl(E^{(n)}/\BQ\bigr)$ if and only if $\wt \Lambda\in\Sel_2(E/\BQ)$ and $D_\Lambda^{(n)}$ is locally solvable at each $p\mid n$ by Lemmas~\ref{lem:local-solv-odd} and \ref{lem:local-solv2-odd}.

If $\Lambda\in\Sel_2\bigl(E^{(n)}/\BQ\bigr)$, then $\wt\Lambda\in\Sel_2(E/\BQ)$.
By our assumptions, $\wt\Lambda$ is $2$-torsion, which should be $(1,1,1)$ or $(e_2,-e_1,-e_1e_2)$.
If $\wt\Lambda=(e_2,-e_1,-e_1e_2)$, then
\[\Lambda\cdot(ne_2,-ne_1,-e_1e_2)=\Bigl(\prod_{i=1}^k p_i^{1-x_i},\prod_{i=1}^k p_i^{1-y_i},\prod_{i=1}^k p_i^{z_i}\Bigr).\]
Hence each element in $\Sel_2'\bigl(E^{(n)}\bigr)$ has a unique representative $(d_1,d_2,d_3)$ with $0<d_i\mid n$.
Based on this, we can express $\Sel_2'\bigl(E^{(n)}\bigr)$ in terms of linear algebra by Lemma~\ref{lem:local-solv-p} after a translation of languages.
\end{proof}
\begin{remark}
Since $\leg pq=1$ for any odd primes $p\mid n, q\mid e_1e_2e_3$, we have $\bfD_e=\bfD_u$, where $u\in\set{\pm1,\pm2}$ such that $e/u\equiv 1\bmod 4$ for any square-free $e\mid e_1e_2e_3$.
\end{remark}

\subsection{The Cassels pairing}
Let $(a,b,c)$ be a primitive triple of integers satisfying
\[e_1a^2+e_2b^2+e_3c^2=0.\]
Then $a,b,c$ are odd.
Denote by $\CE=\mse_{e_1a^2,e_2b^2}$ and $\CE^{(n)}=\mse_{e_1a^2n,e_2b^2n}$.

\begin{theorem}\label{thm:sha-odd}
Let $n$ be an odd positive square-free integer coprime with $e_1e_2e_3abc$, whose prime factors are quadratic residues modulo each odd prime factor of $e_1e_2e_3abc$.
If $\Sel_2(E/\BQ)\cong\Sel_2(\CE/\BQ)\cong(\BZ/2\BZ)^2$, then the following are equivalent:
\begin{enumerate}
\item $\rank_\BZ E^{(n)}(\BQ)=0$ and $\Sha\bigl(E^{(n)}/\BQ\bigr)[2^\infty]\cong(\BZ/2\BZ)^{2t}$;
\item $\rank_\BZ \CE^{(n)}(\BQ)=0$ and $\Sha\bigl(\CE^{(n)}/\BQ\bigr)[2^\infty]\cong(\BZ/2\BZ)^{2t}$.
\end{enumerate}
\end{theorem}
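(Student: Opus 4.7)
The plan is to mirror the proof of Theorem~\ref{thm:sha-general}, with the bulk of the new work happening at the prime~$2$. First I will verify that $\CE$, $E^{(n)}$ and $\CE^{(n)}$ all fail to have a rational point of order $4$. This follows from Lemma~\ref{lem:ono} by the same parity-at-$2$ argument used in the first paragraph of the proof of Theorem~\ref{thm:selmer-odd}: twisting by $a^2b^2c^2$ or by the odd integer $n$ preserves the fact that two of the three defining integers are odd while the third is exactly divisible by~$2$, and in this situation none of the three pairs appearing in Lemma~\ref{lem:ono} can consist of squares.

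Next I will apply Theorem~\ref{thm:selmer-odd} to both $E^{(n)}$ and $\CE^{(n)}$. Replacing $e_1,e_2,e_3$ by $e_1a^2,e_2b^2,e_3c^2$ does not change the entries of the defining matrix in $M_k(\BF_2)$, since the extra square factors contribute $0$ to every additive Jacobi symbol. This yields a canonical identification $\Sel_2'\bigl(E^{(n)}\bigr)\cong\Sel_2'\bigl(\CE^{(n)}\bigr)$ whose common representatives are triples $\Lambda=(d_1,d_2,d_3)$ with $0<d_i\mid n$. After normalising $a\equiv b\equiv c\equiv 1\bmod 4$, I will set up $Q_i,L_i,P$ for $E^{(n)}$ and $\CQ_i,\CL_i,\CP$ for $\CE^{(n)}$ exactly as in the proof of Theorem~\ref{thm:sha-general}, and then show $\pair{\Lambda,\Lambda'}_{\CE^{(n)}}=\pair{\Lambda,\Lambda'}_{E^{(n)}}$ place by place.

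For $v\neq 2$ the arguments of Theorem~\ref{thm:sha-general} transfer almost verbatim: at $v=\infty$ and at odd $v\mid e_1e_2e_3abc$ each $d_i'$ is a square in $\BQ_v^\times$ (by positivity and the quadratic-residue hypothesis), so both local pairings vanish; at odd $v=p\mid n$ the subcases $p\nmid d_1d_2d_3$ and $p\nmid d_1,\ p\mid d_2d_3$ together with their symmetric analogues are handled using Lemmas~\ref{lem:special-residue} and~\ref{lem:fund-equality}. The main obstacle is $v=2$, which was trivial in the general case because the assumption $p\equiv 1\bmod 8$ made every $d_i'$ a square in $\BQ_2^\times$, but now demands a direct computation.

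At $v=2$, since the representatives have $d_i,d_i'$ all odd by Lemma~\ref{lem:local-solv-odd}, Lemma~\ref{lem:local-solv2-odd} pins $(d_1,d_2,d_3)\bmod 8$ into a short list of admissible patterns and supplies an explicit $P_2\in D_\Lambda^{(n)}(\BQ_2)$ in each. Because $a^2\equiv b^2\equiv c^2\equiv 1\bmod 8$, the same recipe produces a $\CP_2\in \CD_\Lambda^{(n)}(\BQ_2)$ whose coordinates differ from those of $P_2$ only by explicit rescalings by $a,b,c$ or by factors congruent to $1\bmod 8$. The desired equality $[\CL_i(\CP_2),d_i']_2=[L_i(P_2),d_i']_2$ then rests on two facts: (a) $[a,d_i']_2=[b,d_i']_2=[c,d_i']_2=0$ for odd $d_i'$, since $a,b,c\equiv 1\bmod 4$; and (b) by Lemma~\ref{lem:special-residue} we have $\tfrac18(a+b)(b+c)(c+a)\equiv 1\bmod 4$, so the correction factor arising from Lemma~\ref{lem:fund-equality} (in analogy with case (ii-b) of the proof of Theorem~\ref{thm:sha-general}) pairs trivially with $d_i'$ at $2$ as well. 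Running through the admissible patterns completes the identification of Cassels pairings, and the theorem follows from Lemma~\ref{lem:non-deg} applied to $E^{(n)}$ and $\CE^{(n)}$.
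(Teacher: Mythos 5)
Your overall architecture is exactly that of the paper: rule out rational $4$-torsion, invoke Theorem~\ref{thm:selmer-odd} for both curves to get the canonical identification of pure $2$-Selmer groups with representatives $0<d_i\mid n$, normalize $a\equiv b\equiv c\equiv 1\bmod 4$, and compare the local Cassels pairings place by place, with $v\neq 2$ handled as in Theorem~\ref{thm:sha-general} and the new content concentrated at $v=2$. Up to and including the treatment of $v=\infty$, odd $v\mid e_1e_2e_3abc$ and $v=p\mid n$, your plan matches the paper's proof.

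At $v=2$, however, the two facts you cite --- (a) $[a,d_i']_2=[b,d_i']_2=[c,d_i']_2=0$ and (b) $\tfrac18(a+b)(b+c)(c+a)\equiv 1\bmod 4$ --- are not sufficient in the two hardest subcases, and your proposal as written would stall there. Lemma~\ref{lem:local-solv2-odd} produces local points with $t=2$ (when $d_1\equiv d_2\equiv 5\bmod 8$) or $u_3=2$ (when $d_3\equiv -1\bmod 4$ and $e_1n+d_2\equiv 4\bmod 8$), and in these subcases two things go wrong with your outline. First, $\CL_i(\CP_2)$ and $L_i(P_2)$ are only known to be congruent modulo $8$ (resp.\ $16$), which determines equality of $[\,\cdot\,,d_i']_2$ only if one also knows their common $2$-adic valuation is at most $1$; the paper secures this by choosing the signs of the $\gamma_i$ (resp.\ $\alpha_i$) so that $2\mmid \CL_i(\CP_2)$, a step absent from your plan. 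Second, in the subcase $e_1n+d_2\equiv 4\bmod 8$ the triple fed into Lemma~\ref{lem:fund-equality} is not an exact solution of the relevant conic, so the right-hand side $\half(e_1a+e_2b+e_3c)\bigl(x^2/e_1+y^2/e_2+z^2/e_3\bigr)$ does \emph{not} vanish; before fact (b) can be applied one must show this error term is $2$-adically negligible, which the paper does via the observation $4\mid(e_1a+e_2b+e_3c)$ together with an explicit computation showing $\tfrac14\bigl(x^2/e_1+y^2/e_2+z^2/e_3\bigr)\equiv 0\bmod 8$. Your appeal to ``analogy with case (ii-b) of Theorem~\ref{thm:sha-general}'' does not cover this, since there the identity is applied to an exact solution and the error term is identically zero. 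These are concrete missing steps rather than routine bookkeeping, so the $v=2$ part of your argument needs to be completed along these lines.
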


\begin{proof}
As shown in the proof of Theorem~\ref{thm:selmer-odd}, 
both $E^{(n)}$ and $\CE(\BQ)^{(n)}$ have no rational point of order $4$.
Since the matrix in Theorem~\ref{thm:selmer-odd} does not depend on $a,b,c$, we have a canonical isomorphism
\[\Sel_2'\bigl(E^{(n)}\bigr)\cong\Sel_2'\bigl(\CE^{(n)}\bigr).\]
By choosing suitable signs, we may assume that $a\equiv b \equiv c\equiv 1\bmod 4$.
Let $\Lambda=(d_1,d_2,d_3),\Lambda'=(d_1',d_2',d_3')\in\Sel_2'\bigl(E^{(n)}\bigr)$ with $0<d_i,d_i'\mid n$.
We will use the notations $\CD,\CH,\CQ,\CL,\CP,D,H,Q,L,P,\alpha_i,\beta_i,\gamma_i$ in the proof of Theorem~\ref{thm:sha-general}.

(i) The case odd $v\mid e_1e_2e_3abc n$.
The proof is similar to the proof of Theorem~\ref{thm:sha-general}.

(ii) The case $v=2$.
As shown in Lemma~\ref{lem:local-solv2-odd}, the case $(d_1,d_2,d_3)\equiv(-1,-1,1)\bmod 4$ is impossible.

(ii-a) The case $(d_1,d_2,d_3)\equiv(1,1,1)\bmod 4$.
As shown in Lemma~\ref{lem:local-solv2-odd}, if $d_1\equiv d_2\equiv 1\bmod8$, take $\CP_2=(0,1/\sqrt{d_1},1/\sqrt{d_2},1/\sqrt{d_3})=P_2$.
Then
\[\begin{split}
\CL_1(\CP_2)&=\beta_1\sqrt{d_2}-\gamma_1\sqrt{d_3}=L_1(P_2),\\
\CL_2(\CP_2)&=\beta_2\sqrt{d_3}-\gamma_2\sqrt{d_1}=L_2(P_2),\\
\CL_3(\CP_2)&=\beta_3\sqrt{d_1}-\gamma_3\sqrt{d_2}=L_3(P_2).
\end{split}\]

If $d_1\equiv d_2\equiv 5\bmod 8$, 
denote by 
\[\begin{aligned}
\CU&=\sqrt{(d_3+4e_2b^2n)d_1}, &\CV&=\sqrt{(d_3-4e_1a^2n)d_2},\\
U&=\sqrt{(d_3+4e_2n)d_1}, &V&=\sqrt{(d_3-4e_1n)d_2} 
\end{aligned}\]
with $\CU\equiv \CV\equiv U\equiv V\equiv 1\bmod 4$.
Since $\CU^2\equiv U^2\bmod{32}$, we have $\CU\equiv U\bmod {16}$.
Similarly, $\CV\equiv V\bmod{16}$.
Take $\CP_2=(2,\CU/d_1,\CV/d_2,1)$, then $P_2=(2,U/d_1,V/d_2,1)$ and
\[\begin{split}
\CL_1(\CP_2)&\equiv2e_1an\alpha_1+\beta_1 V-d_3\gamma_1\equiv L_1(P_2),\\
\CL_2(\CP_2)&\equiv2e_2bn\alpha_2+d_3\beta_2-\gamma_2U\equiv L_2(P_2),\\
\CL_3(\CP_2)&\equiv2e_3cn\alpha_3+\beta_3U-\gamma_3V\equiv L_3(P_2)
\end{split}\]
modulo $8$.
If $\alpha_1$ is odd, then exactly one of $\beta_1$ and $\gamma_1$ is odd.
Thus $\CL_1(\CP_2)$ is odd.
If $\alpha_1$ is even, then both of $\beta_1$ and $\gamma_1$ are odd.
By choosing a suitable sign of $\gamma_1$, we may assume that $2\mmid(\beta_1-\gamma_1)$.
Therefore, $2\mmid \CL_1(\CP_2)$.
Similarly, we may assume that $2\mmid \CL_2(\CP_2)$.
Note that $\beta_3,\gamma_3$ are odd.
By choosing a suitable sign of $\gamma_3$, we may assume that $2\mmid \CL_3(\CP_2)$.
Since $\CL_i(\CP_2)\equiv L_i(P_2)\bmod8$, we have
\[[\CL_i(\CP_2),d_i']_2=[L_i(P_2),d_i']_2.\]

(ii-b) The case $d_3\equiv-1\bmod4$.
As shown in Lemma~\ref{lem:local-solv2-odd}, 
\[e_1n+d_2\equiv e_2n-d_1\equiv 0\bmod 4
\quad\text{and}\quad
(e_1n+d_2)-(e_2n-d_1)\equiv 0\bmod 8.\]
If $e_1n+d_2\equiv e_2n-d_1\equiv 0\bmod 8$, take $\CP_2=(1,bu/d_1,av/d_2,0)$ where $u^2=e_2nd_1,v^2=-e_1nd_2$.
Then $P_2=(1,u/d_1,v/d_2,0)$ and
\[\begin{split}
\CL_1(\CP_2)&=ae_1n\alpha_1+a\beta_1v=aL_1(P_2),\\
\CL_2(\CP_2)&=be_2n\alpha_2-b\gamma_2u=bL_2(P_2),\\
\CL_3(\CP_2)&=-a\gamma_3v+b\beta_3u+ce_3n\alpha_3.
\end{split}\]
Since
\[\frac{(-\gamma_3v)^2}{e_1}+\frac{(\beta_3u)}{e_2}+\frac{(e_3n\alpha_3)^2}{e_3}=n(-d_2\gamma_3^2+d_1\beta_3^2+e_3n\alpha_3^2)=0,\]
we have
\[\begin{split}
\CL_3(\CP_2)L_3(P_2)
=\half(a+b)(a+c)(b+c)\biggl(
\frac{e_3n\alpha_3}{a+b}+
\frac{\beta_3u}{a+c}-
\frac{\gamma_3v}{b+c}\biggr)^2
\end{split}\]
by Lemma~\ref{lem:fund-equality}.
Therefore,
\[\begin{split}
[\CL_1(\CP_2),d_1']_2&=[L_1(P_2),d_1']_2+[a,d_1']_2=[L_1(P_2),d_1']_2,\\
[\CL_2(\CP_2),d_2']_2&=[L_2(P_2),d_2']_2+[b,d_2']_2=[L_2(P_2),d_2']_2,\\
[\CL_3(\CP_2),d_3']_2&=[L_3(P_2),d_3']_2+[2(a+b)(a+c)(b+c),d_3']_2=[L_3(P_2),d_3']_2
\end{split}\]
by Lemma~\ref{lem:special-residue}.

If $e_1n+d_2\equiv e_2n-d_1\equiv 4\bmod 8$, denote by
\[\begin{aligned}
\CU&=\sqrt{(4d_3b^{-2}+e_2n)d_1},&\CV&=\sqrt{(4d_3a^{-2}-e_1n)d_2},\\
U&=\sqrt{(4d_3+e_2n)d_1},&V&=\sqrt{(4d_3-e_1n)d_2}
\end{aligned}\]
with $\CU\equiv \CV\equiv U\equiv V\equiv1\bmod4$.
Similar to (ii-a), we have $\CU\equiv U, \CV\equiv V\bmod{16}$.
Take $\CP_2=(1,b\CU/d_1,a\CV/d_2,2)$, then $P_2=(1,U/d_1,V/d_2,2)$ and
\[\begin{split}
\CL_1(\CP_2)&\equiv ae_1n\alpha_1+a\beta_1V-2d_3\gamma_1,\\
\CL_2(\CP_2)&\equiv be_2n\alpha_2+2d_3\beta_2-b\gamma_2U,\\
\CL_3(\CP_2)&\equiv -a\gamma_3V+b\beta_3U+ce_3n\alpha_3
\end{split}\]
modulo $16$.

If $\gamma_1$ is odd, then exactly one of $\alpha_1$ and $\beta_1$ is odd.
Thus $\CL_1(\CP_2)$ is odd.
If $\gamma_1$ is even, then both of $\alpha_1$ and $\beta_1$ are odd.
By choosing a suitable sign of $\alpha_1$, we may assume that $4\mid(\alpha_1+\beta_1)$.
Therefore, $2\mmid \CL_1(\CP_2)$.
Since $\CL_1(\CP_2)\equiv aL_1(P_2)\bmod8$, we have
\[[\CL_1(\CP_2),d_1']_2=[L_1(P_2),d_1']_2+[a,d_1']_2=[L_1(P_2),d_1']_2.\]
Similarly, we may assume that $2\mmid \CL_2(\CP_2)$ by choosing a suitable sign of $\alpha_2$.
Since $\CL_2(\CP_2)\equiv aL_2(P_2)\bmod8$, we have
\[[\CL_2(\CP_2),d_2']_2=[L_2(P_2),d_2']_2+[b,d_2']_2=[L_2(P_2),d_2']_2.\]

Clearly, $\beta_3$ and $\gamma_3$ are odd.
By choosing a suitable sign of $\gamma_3$, we may assume that $2\mmid \CL_3(\CP_2)$ and $2\mmid L_3(P_2)$.
Since
\[\begin{split}
&\frac14\left(\frac{(-\gamma_3V)^2}{e_1}+\frac{(\beta_3U)^2}{e_2}+\frac{(e_3n\alpha_3)^2}{e_3}\right)\\
=&d_3\left(\frac{d_1\beta_3^2}{e_2}+\frac{d_2\gamma_3^2}{e_1}\right)+\frac14n(e_3n\alpha_3^2+d_1\beta_3^2-d_2\gamma_3^2)\\
\equiv&d_3(d_1e_2+d_2e_1)
\equiv d_3\bigl((e_2n-4)e_2+(4-e_1n)e_1\bigr)\\
\equiv&4d_3(-e_2+e_1)\equiv0\bmod8
\end{split}\]
and $4\mid(e_1a+e_2b+e_3c)$, the odd number
\[\begin{split}
\frac{\CL_3(\CP_2)}2\cdot\frac{L_3(P_2)}2
\equiv&\frac18(a+b)(a+c)(b+c)\biggl(
-\frac{\gamma_3V}{b+c}
+\frac{\beta_3U}{c+a}
+\frac{e_3n\alpha_3}{a+b}\biggr)^2\bmod8
\end{split}\]
is congruent to $1$ modulo $4$ by Lemmas~\ref{lem:fund-equality} and \ref{lem:special-residue}.
Therefore
\[[\CL_3(\CP_2),d_3']_2=[L_3(P_2),d_3']_2.\]

The rest part is similar to the proof of Theorem~\ref{thm:sha-general}.
\end{proof}

\section{The even case}

Assume that $2\mmid e_1, 2\mmid e_2, 4\mid e_3$ and $E^{(n)}$ has no rational point of order $4$.
Write $e_i=2f_i$.
Let $n$ be an odd positive square-free integer.
Let $\Lambda=(d_1,d_2,d_3)$ where $d_1,d_2,d_3$ are square-free divisors of $2f_1f_2f_3n$.

\subsection{Homogeneous spaces}

Recall that $D_\Lambda^{(n)}$ is defined as
\[\begin{cases}
	H_1:& 2f_1nt^2+d_2u_2^2-d_3u_3^2=0, \\
	H_2:& 2f_2nt^2+d_3u_3^2-d_1u_1^2=0, \\
	H_3:& 2f_3nt^2+d_1u_1^2-d_2u_2^2=0
\end{cases}\]
and the $2$-torsion points of $E^{(n)}$ correspond to 
\[(1,1,1),\ (-2f_3n,-f_1f_3,2f_1n),\ (-f_2f_3,2f_3n,-2f_2n),\ (2f_2n,-2f_1n,-f_1f_2).\]
These triples have $2$-valuations $(0,0,0),(0,1,1),(1,0,1),(1,1,0)$ (not correspondingly).
Hence any element in the pure $2$-Selmer group $\Sel_2'\bigl(E^{(n)}\bigr)$ has a unique representative $\Lambda=(d_1,d_2,d_3)$ with odd $d_i\mid e_1e_2e_3n$.

\begin{lemma}\label{lem:local-solv-even}
Let $\Lambda=(d_1,d_2,d_3)$ where $d_1,d_2,d_3$ are odd.
If $D_\Lambda^{(n)}(\BQ_2)\neq\emptyset$, then $d_3\equiv1\bmod4$.
\end{lemma}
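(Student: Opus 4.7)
The plan is to clear denominators and pick a primitive integral solution $(t,u_1,u_2,u_3)\in\BZ_2^4$ (not all entries even) to the system defining $D_\Lambda^{(n)}$, and then run a short $2$-adic case analysis, parallel in spirit to Lemma~\ref{lem:local-solv-odd} but one level finer (modulo $4$ rather than just odd/even). The starting observation is that since $4\mid 2f_3$, reducing $H_3$ modulo $4$ yields
\[d_1u_1^2\equiv d_2u_2^2\bmod 4.\]
Because $d_1,d_2$ are odd, this forces $u_1\equiv u_2\bmod 2$.

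First I would eliminate the case that $u_1,u_2$ are both even. In that case primitivity requires $t$ or $u_3$ to be odd. Looking at $H_1\bmod 4$: the term $d_2u_2^2$ is divisible by $4$, so $2f_1nt^2\equiv d_3u_3^2\bmod 4$. If $t$ is odd then the left side is $\equiv 2\bmod 4$ while the right side is odd or $0$, impossible; if $t$ is even then $2f_1nt^2\equiv 0\bmod 8$, forcing $d_3u_3^2\equiv 0\bmod 4$, which contradicts $d_3$ being odd together with primitivity (which would then make $u_3$ odd).

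So $u_1,u_2$ are both odd, giving $d_1\equiv d_2\bmod 4$. The same $H_1\bmod 4$ argument (now with $d_2u_2^2$ odd) rules out $u_3$ being even, so in fact $u_1,u_2,u_3$ are all odd. Reducing $H_1$ modulo $4$ now yields
\[d_3\equiv d_2+2f_1nt^2\bmod 4,\]
i.e.\ $d_3\equiv d_2\bmod 4$ if $t$ is even, and $d_3\equiv d_2+2\bmod 4$ if $t$ is odd.

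To finish I would invoke the global constraint $d_1d_2d_3\in\BQ^{\times 2}$, which for odd square-free $d_i$ forces $d_1d_2d_3\equiv 1\bmod 4$. Combined with $d_1\equiv d_2\bmod 4$: if $t$ is even then $d_2^3\equiv 1\bmod 4$, so $d_2\equiv 1\bmod 4$ and hence $d_3\equiv 1\bmod 4$; if $t$ is odd then $d_2^2(d_2+2)\equiv d_2+2\equiv 1\bmod 4$, so $d_2\equiv 3\bmod 4$ and again $d_3\equiv 1\bmod 4$. Either way the conclusion holds. There is no real obstacle here; the only subtle point is recognizing that ruling out $d_3\equiv 3\bmod 4$ requires the square condition on $d_1d_2d_3$, not merely local information from a single $H_i$.
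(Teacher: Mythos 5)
Your proof is correct and follows essentially the same route as the paper's: normalize to a primitive solution, show all $u_i$ are odd, read off $d_1\equiv d_2\bmod 4$ from $H_3$ using $4\mid e_3$, and conclude $d_3\equiv 1\bmod 4$ from the square condition on $d_1d_2d_3$ (which the paper uses implicitly in its final sentence). Your extra computation of $d_3\bmod 4$ from $H_1$ in terms of the parity of $t$ is harmless but redundant, since $d_1\equiv d_2\bmod 4$ together with $d_1d_2d_3\equiv 1\bmod 4$ already forces the conclusion.
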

\begin{proof}
Since $v_2(t)\ge v_2(u_3)=v_2(u_2)$ by $H_1$ and
$v_2(t)\ge v_2(u_1)=v_2(u_3)$ by $H_2$, we may assume that $u_1,u_2,u_3$ are $2$-adic units and $t$ is a $2$-adic integer.
Then
\[2f_3nt^2=d_2u_2^2-d_1u_1^2\equiv d_2-d_1\bmod 8.\]
This implies that $d_2\equiv d_1\bmod 4$ and then $d_3\equiv 1\bmod 4$.
\end{proof}

\begin{lemma}\label{lem:local-solv2-even}
Let $\Lambda=(d_1,d_2,d_3)$ where $d_1,d_2,d_3$ are odd and $d_3\equiv1\bmod 4$.
If $D_\Lambda^{(n)}$ is locally solvable at all places $v\neq 2$, then $D_\Lambda^{(n)}$ is also locally solvable at $v=2$.
\end{lemma}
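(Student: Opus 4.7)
The plan is to adapt, essentially verbatim, the strategy of Lemma~\ref{lem:local-solv2-odd}. First, by applying the product formula to each conic $H_i$, local solvability at every $v\neq 2$ forces local solvability at $v=2$, which is equivalent to the three Hilbert identities
\[[2f_1nd_3,d_1]_2=[2f_2nd_1,d_2]_2=[2f_3nd_2,d_3]_2=0.\]
These are the only constraints I plan to exploit, together with the hypothesis $d_3\equiv 1\bmod 4$ (from Lemma~\ref{lem:local-solv-even}) and the identity $d_1d_2d_3\in\BQ^{\times2}$, which forces $d_1d_2\equiv d_3\bmod 8$ and in particular $d_1\equiv d_2\bmod 4$. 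I would then perform a case analysis on $(d_1,d_2,d_3)\bmod 8$, and in each subcase exhibit an explicit $\BQ_2$-point with $t\in\{0,1,2\}$, letting the $u_i$ be square roots of expressions of the form $d_3/d_i$, $(d_3\pm 4\cdot\text{(unit)})/d_i$, or $(4d_3\pm 2f_in)/d_i$, in direct parallel with parts (i), (ii-a), (ii-b) of the proof of Lemma~\ref{lem:local-solv2-odd}. The Hilbert symbol identities together with the chosen residue configuration are exactly what is needed to ensure each radicand is a $2$-adic square (equivalently $\equiv 1\bmod 8$).

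The main difference, and the main obstacle, is that the hypothesis $4\mid e_3$ (in place of $2\mmid e_3$) has two consequences that complicate the case analysis. First, the configuration $(d_1,d_2,d_3)\equiv(-1,-1,1)\bmod 4$, which was ruled out in part~(ii) of the proof of Lemma~\ref{lem:local-solv2-odd} by the parity contradiction $4\nmid e_3$, is genuinely admissible here and must be handled by an explicit construction of its own, most likely with the ansatz $t=1$. Second, the third Hilbert identity $[2f_3nd_2,d_3]_2=0$ is much weaker than its odd counterpart because $v_2(2f_3n)\ge 2$, so while $d_3\equiv 1\bmod 4$ is pinned down, the residue $d_3\bmod 8$ remains free; this necessitates an additional subdivision into $d_3\equiv 1\bmod 8$ and $d_3\equiv 5\bmod 8$, in the latter of which the ansatz typically switches from $t=0$ or $t=1$ to $t=2$ so that a correction of the form $4d_3$ can absorb the missing $\bmod 8$ information.

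Finally, each explicit candidate would be verified to lie in $D_\Lambda^{(n)}(\BQ_2)$ by checking that its coordinates are well-defined $2$-adic numbers, using the criterion that an odd integer is a $2$-adic square iff it is $\equiv 1\bmod 8$. This verification is mechanical once the case split and Hilbert identities are in hand, so the bulk of the work lies in organizing the subcases cleanly and choosing the auxiliary substitutions $t\in\{0,1,2\}$ case by case.
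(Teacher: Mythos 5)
Your proposal follows essentially the same route as the paper: the product formula yields $[2f_1nd_3,d_1]_2=[2f_2nd_1,d_2]_2=[2f_3nd_2,d_3]_2=0$, and an explicit point with small $t$ is then produced in each residue class of $(d_1,d_2,d_3)$ modulo $4$, including the now-admissible case $(-1,-1,1)$, which the paper handles with $t=u_3=1$ and radicands $(d_3\pm 2f_in)/d_j$ exactly as you anticipate. The only difference is that your extra subdivision on $d_3\bmod 8$ with a $t=2$ ansatz turns out to be unnecessary: in the case $(1,1,1)\bmod 4$ the first two Hilbert identities already force $d_1\equiv d_2\equiv d_3\equiv 1\bmod 8$, and in the case $(-1,-1,1)\bmod 4$ the $t=1$ point works uniformly without knowing $d_3\bmod 8$.
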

\begin{proof}
Similar to Lemma~\ref{lem:local-solv2-odd}, we have
\[[2f_1nd_3,d_1]_2=[2f_2nd_1,d_2]_2=[2f_3nd_2,d_3]_2=0.\]

If $(d_1,d_2,d_3)\equiv(1,1,1)\bmod 4$, then $d_1\equiv d_2\equiv d_3\equiv1\bmod 8$.
Take
\[t=0,u_1=\sqrt{d_3/d_1},\ u_2=\sqrt{d_3/d_2},\ u_3=1.\]

If $(d_1,d_2,d_3)\equiv(-1,-1,1)\bmod 4$, then $2f_1nd_3\equiv d_1+3$ and $2f_2nd_1\equiv d_2+3\bmod 8$.
In other words, $2f_1n\equiv d_2+3d_3\bmod8$ and $2f_2n\equiv d_3+3d_1\bmod 8$.
Take $t=u_3=1$, then
\[u_1^2=(d_3+2f_2n)/d_1\equiv 2d_2+3\equiv 1\bmod8\]
and
\[u_2^2=(d_3-2f_1n)/d_2\equiv -2d_1-1\equiv 1\bmod8.\]

Hence $D_\Lambda^{(n)}$ is locally solvable at $v=2$.
\end{proof}

\begin{lemma}\label{lem:local-solv-q}
Assume that $n$ is coprime with $e_1e_2e_3$.
If $q$ is an odd prime factor of $e_i$, then $D_\Lambda^{(n)}(\BQ_q)\neq\emptyset$ if and only if $q\nmid d_i$ and
\begin{itemize}
\item $\leg{d_i}q=1$, if $q\nmid d_{i+1}$;
\item $\leg{e_{i+1}nd_i}q=1$, if $q\mid d_{i+1}, q^2\nmid e_i$;
\item $\leg{e_{i+1}n}q=\leg{d_i}q=1$, if $q\mid d_{i+1}, q^2\mid e_i$.
\end{itemize}
\end{lemma}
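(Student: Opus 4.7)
The plan is to handle the statement by a case analysis on the $q$-adic valuations of $d_1, d_2, d_3$, extracting the stated Legendre-symbol conditions from reductions of $H_1, H_2, H_3$ modulo $q$ (and modulo higher powers of $q$ when needed). By the cyclic symmetry of the system $\{H_1, H_2, H_3\}$ under the simultaneous cyclic shift of $(e_1, e_2, e_3)$, $(d_1, d_2, d_3)$, and $(u_1, u_2, u_3)$, it suffices to treat $i = 1$. Fix an odd prime $q \mid e_1$; from $\gcd(e_1, e_2, e_3) \in \{1, 2\}$, $e_1 + e_2 + e_3 = 0$, and the coprimality of $n$ with $e_1 e_2 e_3$, I obtain $q \nmid 2 e_2 e_3 n$ and $e_2 \equiv -e_3 \pmod q$. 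Set $s = v_q(e_1) \geq 1$ and work with primitive $\BZ_q$-points $(t, u_1, u_2, u_3)$.

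The first step is to rule out $q \mid d_1$. Suppose $q \mid d_1$; since $d_1 d_2 d_3$ is a square and each $d_i$ is square-free, exactly one of $d_2, d_3$ is divisible by $q$, and by symmetry I may assume $q \mid d_2$, $q \nmid d_3$. The three summands of $H_1$ then have $v_q$ equal to $s + 2 v_q(t)$, $1 + 2 v_q(u_2)$, $2 v_q(u_3)$; only the last can achieve $v_q = 0$, forcing $v_q(u_3) \geq 1$. A parity-of-valuation analysis of $H_2$ yields $v_q(t) = v_q(u_3) \geq 1$ and $v_q(u_1) \geq v_q(t)$, while the same argument on $H_3$ gives $v_q(u_1) = v_q(u_2)$ and $v_q(t) > v_q(u_1)$. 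These constraints are incompatible, contradicting primitivity.

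With $q \nmid d_1$ established, the squareness of $d_1 d_2 d_3$ leaves two cases. In the first, $q \nmid d_2$ and hence $q \nmid d_3$; reducing $H_1$ modulo $q$ kills the $t^2$-term and leaves $d_2 u_2^2 \equiv d_3 u_3^2 \pmod q$. A short valuation check rules out $u_2 \equiv u_3 \equiv 0 \pmod q$ in any primitive solution, so $\leg{d_2 d_3}q = 1$, which via $d_1 d_2 d_3 \in \BQ^{\times 2}$ is $\leg{d_1}q = 1$. Conversely, when $\leg{d_1}q = 1$, all three symbols $\leg{d_i}q$ equal $1$, and $(0, 1, \sqrt{d_1/d_2}, \sqrt{d_1/d_3})$ is an exact $\BQ_q$-point.

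The most delicate case is $q \mid d_2$ and $q \mid d_3$. Writing $d_2 = q d_2'$ and $d_3 = q d_3'$, reducing $H_2$ and $H_3$ modulo $q$ kills the $u_3^2$ and $u_2^2$ terms respectively and yields $2 f_2 n t^2 \equiv d_1 u_1^2$ and $2 f_3 n t^2 \equiv -d_1 u_1^2 \pmod q$; using $e_2 \equiv -e_3 \pmod q$ these collapse to $\leg{e_2 n d_1}q = 1$. When $s = 1$, dividing $H_1$ by $q$ gives a regular conic with $q$-unit coefficients, imposing no further constraint; a Hensel lift from a unit mod-$q$ solution of $H_2$ produces the desired $\BQ_q$-point. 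When $s \geq 2$, the $t^2$-term of $H_1$ has $v_q \geq 2$ while the $u_2^2, u_3^2$-terms have odd $v_q$, so a parity argument forces the two odd-valuation terms to cancel at leading order, giving $d_2' u_2^2 \equiv d_3' u_3^2 \pmod q$ and hence $\leg{d_2' d_3'}q = 1$. Since $d_1 d_2' d_3'$ is a square, this is $\leg{d_1}q = 1$, and combined with $\leg{e_2 n d_1}q = 1$ produces the two independent conditions $\leg{e_2 n}q = \leg{d_1}q = 1$. The main obstacle is precisely this final subcase: extracting two independent Legendre-symbol conditions requires careful bookkeeping of $q$-adic valuations across $H_1$ together with one of $H_2, H_3$, and verifying sufficiency calls for a case-specific construction or Hensel lift of a $\BQ_q$-point.
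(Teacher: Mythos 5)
Your overall architecture matches the paper's: reduce to $i=1$ by cyclic symmetry, eliminate $q\mid d_1$ by a valuation analysis of $H_1,H_2,H_3$, then split according to whether $q$ divides $d_2d_3$. The necessity extractions in the middle case and the elimination of $q\mid d_1$ are fine. The genuine gaps are in the sufficiency directions. In the case $q\nmid d_1d_2d_3$ you claim that $\leg{d_1}q=1$ forces all three symbols $\leg{d_i}q$ to equal $1$; this is false, since $d_1d_2d_3\in\BQ^{\times2}$ only gives $\leg{d_1}q\leg{d_2}q\leg{d_3}q=1$, so one can have $\leg{d_2}q=\leg{d_3}q=-1$. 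In that sub-case your witness $(0,1,\sqrt{d_1/d_2},\sqrt{d_1/d_3})$ does not lie in $D_\Lambda^{(n)}(\BQ_q)$ because $d_1/d_2$ is a non-square unit. The paper's device, which you are missing, is to set $u_2^2=d_1d_3/d_2=d_1d_2d_3/d_2^2$ (an honest rational square), $d_1u_1^2=d_1d_3-e_3nt^2$ and $d_3u_3^2=d_1d_3+e_1nt^2$, which satisfies all three equations identically in $t$; one is then reduced to choosing $t\in\BZ_q$ making two expressions squares, which a counting argument over $\BF_q$ (the $(q+1)/2$ values of $t^2$ cannot all hit the $(q-1)/2$ non-residues) supplies. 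The same parametrization is what carries the sufficiency in the cases $q\mid d_2,\ q\mid d_3$: your appeal to ``a Hensel lift from a unit mod-$q$ solution of $H_2$'' does not work as stated, because the reduction of the full intersection modulo $q$ is not smooth (the coefficients $d_2,d_3$ vanish) and solving $H_2$ alone controls neither $H_1$ nor $H_3$; in the final sub-case you explicitly leave the construction open, which is precisely where the content of the lemma lies.

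A second, subtler problem is the ``short valuation check'' that is supposed to rule out $u_2\equiv u_3\equiv0\pmod q$ when $q\nmid d_1d_2d_3$. The parity argument only forces $v_q(u_2)=v_q(u_3)<\infty$ when $v_q(e_1nt^2)$ is odd; when $v_q(e_1)$ is even the term $e_1nt^2$ can cancel $d_2u_2^2$ by itself. Concretely, for $(e_1,e_2,e_3)=(9,2,-11)$, $n=1$, $q=3$, $\Lambda=(-1,-1,1)$, the primitive point $(t,u_1,u_2,u_3)=(1,\sqrt{-2},3,0)\in D_\Lambda(\BQ_3)$ has $u_2\equiv u_3\equiv0\pmod 3$ even though $\leg{-1}{3}=-1$. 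So the necessity of $\leg{d_1}q=1$ in this bullet requires either a different argument or a restriction on $v_q(e_i)$; the paper's own one-line justification (``by $H_1$'') is equally silent here, so this is a point to flag rather than to declare routine.
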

\begin{proof}
By the symmetry, we only need to consider the case $i=1$.
Assume that $D_\Lambda(\BQ_q)\neq\emptyset$.
Since we are dealing with homogeneous spaces, we may assume that $t,u_1,u_2,u_3$ are $q$-adic integers and at least one of them is a $q$-adic unit.
If $q\mid d_1, q\mid d_2, q\nmid d_3$, then $q\mid u_3$ by $H_1$ and $q\mid t$ by $H_3$.
Therefore, $q\mid u_1$ by $H_2$ and $q\mid u_2$ by $H_3$, which is impossible.
Similarly, the case $q\mid d_1, q\nmid d_2, q\mid d_3$ is also impossible. Hence $q\nmid d_1$.

If $q\nmid d_2d_3$, then $\leg{d_1}q=\leg{d_2d_3}q=1$ by $H_1$.
Conversely, if $\leg{d_1}q=1$, then we take
\[\begin{split}
u_2&=d_1d_3/d_2,\\
u_1^2&=d_3-e_3nt^2/d_1,\\
u_3^2&=d_1+e_1nt^2/d_3\equiv d_1\bmod q,
\end{split}\]
where $t\in\BZ_q$ such that $d_3-e_3 nt^2/d_1$ is a square in $\BZ_q$.
In fact, if $e_3nd_2$ is quadratic residue modulo $q$, then we may take $t=\sqrt{\frac{d_1d_3}{e_3n}}$ and $u_1=0$;
if not, then there exists $t\in\set{0,1,\dots,(q-1)/2}$ such that $d_3-e_3nt^2/d_1\bmod q$ is a nonzero square.
Hence $D_\Lambda(\BQ_q)$ is non-empty.

If $q\mid d_2,q\mid d_3$ and $q^2\nmid e_1$, then $\leg{e_2nd_1}q=1$ by $H_2$.
Conversely, if $\leg{e_2nd_1}q=1$, then we take
\[\begin{split}
u_2^2&=d_1d_3/d_2,\\
u_1^2&=d_3-e_3nt^2/d_1\equiv e_2nt^2/d_1\bmod q,\\
u_3^2&=d_1+e_1nt^2/d_3.
\end{split}\]
Similar to the previous case, there exists $t\in\BZ_q$ such that $d_1+e_1t^2/d_3$ is a square in $\BZ_q$.
Hence $D_\Lambda(\BQ_q)$ is non-empty.

If $q\mid d_2,q\mid d_3$ and $q^2\mid e_1$, then $\leg{d_1}q=\leg{d_2d_3}q=1$ by $H_1$ and $\leg{e_2nd_1}q=1$ by $H_2$.
Conversely, if $\leg{e_2n}q=\leg{d_1}q=1$, then $\leg{-e_3nd_1}q=1$ and we take
\[\begin{split}
u_2^2&=d_1d_3/d_2,\\
u_1^2&=d_3-e_3nt^2/d_1\equiv e_2nt^2/d_1\bmod q,\\
u_3^2&=d_1+e_1nt^2/d_3\equiv d_1\bmod q.
\end{split}\]
Hence $D_\Lambda(\BQ_q)$ is non-empty.
\end{proof}

Let $\Lambda=(d_1,d_2,d_3)\in\Sel_2'\bigl(E^{(n)}\bigr)$ with odd $d_i\mid e_1e_2e_3n$ and $d_3\equiv 1\bmod 4$.
We will use the notations $\bfx,\bfy,\bfz$ in \eqref{eq:xyz}.
If $e_2>0$ and $e_3<0$, or all $p_i\equiv1\bmod4$, write
\begin{equation}\label{eq:form-even1}
\begin{split}
d_1&=p_1^{x_1}\cdots p_k^{x_k}\cdot \wt d_1,\\
d_2&=p_1^{y_1}\leg{-1}{p_1}^{z_1}\cdots p_k^{y_k}\leg{-1}{p_1}^{z_k}\cdot \wt d_2,\\
d_3&=(p_1^*)^{z_1}\cdots (p_k^*)^{z_k}\cdot \wt d_3
\end{split}
\end{equation}
where $p^*=\leg{-1}{p}p$.
Then $\wt d_1\wt d_2\wt d_3\in \BQ^{\times2}$.

\begin{theorem}\label{thm:selmer-even1}
Let $n$ be an odd positive square-free integer coprime with $e_1e_2e_3$, whose prime factors are quadratic residues modulo each odd prime factor of $e_1e_2e_3$.
Assume that 
\begin{itemize}
\item both $E$ and $E^{(n)}$ have no rational point of order $4$;
\item $e_2>0$ and $e_3<0$, or all $p_i\equiv1\bmod4$;
\item $\leg{p^*}q=1$ for any odd primes $p\mid n, q\mid e_2e_3$.
\end{itemize}
If $\Sel_2(E/\BQ)\cong(\BZ/2\BZ)^2$, then the map $(d_1,d_2,d_3)\mapsto \svec{\bfx}{\bfz}$ induces an isomorphism
\[\Sel_2'\bigl(E^{(n)}\bigr)\simto \Ker \begin{pmatrix}
\bfA+\bfD_{e_2}&\bfD_{-e_2e_3}\\
\bfD_{-e_1e_2}&\bfA^\rmT+\bfD_{e_2}
\end{pmatrix},\]
where $d_i\mid n, d_1>0, d_3\equiv 1\bmod4$.
\end{theorem}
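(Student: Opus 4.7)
The plan is to adapt the strategies of Theorems~\ref{thm:selmer-general} and \ref{thm:selmer-odd} to this even case, where the decomposition~\eqref{eq:form-even1} uses $p^*$ in the $d_3$-column so that $d_3 \equiv 1 \bmod 4$ emerges automatically. Under the hypothesis $2\mmid e_1$, $2\mmid e_2$, $4\mid e_3$, the four 2-torsion classes of $E^{(n)}$, reduced modulo squares, realize all four parity patterns $(0,0,0)$, $(0,1,1)$, $(1,0,1)$, $(1,1,0)$ of $(v_2(d_1),v_2(d_2),v_2(d_3))$ compatible with $\bfx+\bfy+\bfz={\bf0}$, so every class in $\Sel_2'(E^{(n)})$ has a representative with odd square-free $d_i\mid e_1e_2e_3n$; Lemma~\ref{lem:local-solv-even} then forces $d_3 \equiv 1 \bmod 4$.

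Next I would separate $d_i$ into its $n$-part and its $(e_1e_2e_3)$-part $\wt d_i$ as in~\eqref{eq:form-even1}. Because of the QR hypothesis on $n$, both $n$ and $d_i/\wt d_i$ are squares in $\BQ_q^\times$ for every odd $q\mid e_1e_2e_3$, so local solvability of $D_\Lambda^{(n)}$ and $D_{\wt\Lambda}^{(1)}$ coincide at those places; Lemma~\ref{lem:local-solv-R} together with the case distinction on signs handles $v=\infty$; and Lemma~\ref{lem:local-solv2-even} makes solvability at $2$ automatic. Hence $\Lambda\in\Sel_2(E^{(n)}/\BQ)$ if and only if $\wt\Lambda\in\Sel_2(E/\BQ)$ and $D_\Lambda^{(n)}$ is locally solvable at every $p\mid n$. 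Because $\Sel_2(E/\BQ)=E(\BQ)[2]$ by hypothesis, $\wt\Lambda$ is 2-torsion; inspection of the four 2-torsion representatives shows that only the trivial class $(1,1,1)$ admits an all-odd square-free representative (the other three carry a forced factor of $2$ under the current parity assumptions on $e_i$), so $\wt d_1=\wt d_2=\wt d_3=1$. Consequently $d_1=\prod p_i^{x_i}>0$ and $d_3=\prod(p_i^*)^{z_i}\equiv 1\bmod 4$ automatically, matching the normalization in the statement.

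Finally I would translate the conditions of Lemma~\ref{lem:local-solv-p} at each $p_i\mid n$ into linear equations on $(\bfx,\bfz)$, using $\bfy=\bfx+\bfz$. The $d_1$-column assembles into $\bfA+\bfD_{e_2}$, the $d_3$-column into $\bfA^\rmT+\bfD_{e_2}$ (the transpose arises because that column is encoded via $p_i^*$ rather than $p_i$, and quadratic reciprocity converts $[p_j^*,n]_{p_i}$ into the transposed entry of $\bfA$), and the off-diagonal pieces give $\bfD_{-e_2e_3}$ and $\bfD_{-e_1e_2}$. The auxiliary hypothesis $\leg{p^*}{q}=1$ for $q\mid e_2e_3$ is precisely what absorbs the residual Legendre symbols so that these diagonal matrices collapse to the stated form.

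The principal obstacle is the Hilbert-symbol bookkeeping in the $d_3$-column: passing from $p$ to $p^*$ twists each local computation by a sign $\leg{-1}{p_i}$, and verifying that the cumulative effect is exactly to transpose $\bfA$ requires a careful application of quadratic reciprocity together with the QR hypotheses. The remaining accounting is routine and closely follows the pattern already established in the proof of Theorem~\ref{thm:selmer-odd}.
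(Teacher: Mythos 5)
Your proposal is correct and follows essentially the same route as the paper: reduce to odd representatives, force $d_3\equiv1\bmod4$ via Lemma~\ref{lem:local-solv-even}, split off the $n$-part as in \eqref{eq:form-even1}, compare local solvability place by place (Lemmas~\ref{lem:local-solv-R}, \ref{lem:local-solv-q}, \ref{lem:local-solv2-even}) to conclude $\wt\Lambda=(1,1,1)$, and translate Lemma~\ref{lem:local-solv-p} into the matrix using $\bigl([p_i^*,-n]_{p_j}\bigr)_{i,j}=\bfA^\rmT+\bfD_{-1}$. The one slip is in where the hypothesis $\leg{p^*}{q}=1$ enters: it is not used to shape the diagonal blocks at the primes $p\mid n$, but rather in your second step, to guarantee that $d_2/\wt d_2$ and $d_3/\wt d_3$ are squares in $\BQ_q^{\times}$ for odd $q\mid e_2e_3$ (the quadratic-residue hypothesis on $n$ alone only handles $d_1/\wt d_1$), so that Lemma~\ref{lem:local-solv-q} gives the same answer for $\Lambda$ and $\wt\Lambda$.
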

\begin{proof}
Let $\Lambda=(d_1,d_2,d_3)$ with odd square-free $d_i\mid e_1e_2e_3n$ and denote by $\wt\Lambda=(\wt d_1,\wt d_2,\wt d_3)$.
If all $p_i\equiv1\bmod4$, then $\sgn(d_i)=\sgn(\wt d_i)$.
If $e_2>0,e_3<0$, then $\sgn(d_1)=\sgn(\wt d_1)$.
Hence $D_\Lambda^{(n)}(\BR)\neq\emptyset$ if and only if $D_{\wt\Lambda}^{(1)}(\BR)\neq\emptyset$ by Lemma~\ref{lem:local-solv-R}.

One can show that $n, d_i/\wt d_i\in\BQ_q^{\times2}$ where $q$ is an odd prime factor of $e_i$ by our assumptions.
Therefore, $D_\Lambda^{(n)}(\BQ_q)\neq\emptyset$ if and only if $D_{\wt\Lambda}^{(1)}(\BQ_q)\neq\emptyset$ by Lemma~\ref{lem:local-solv-q}.
Hence $\Lambda\in\Sel_2\bigl(E^{(n)}/\BQ\bigr)$ if and only if $\wt \Lambda\in\Sel_2(E/\BQ)$ and $D_\Lambda^{(n)}$ is locally solvable at each $p\mid n$ by Lemmas~\ref{lem:local-solv-even}, \ref{lem:local-solv2-even} and the fact $d_3\equiv\wt d_3\equiv1\bmod4$.

If $\Lambda\in\Sel_2\bigl(E^{(n)}/\BQ\bigr)$, then $\wt \Lambda\in\Sel_2(E/\BQ)$.
By our assumptions, $\wt\Lambda=(1,1,1)$.
Hence each element in $\Sel_2'\bigl(E^{(n)}\bigr)$ has a unique representative $(d_1,d_2,d_3)$ with $d_i\mid n,d_1>0,d_3\equiv1\bmod4$.
Based on this, we can express $\Sel_2'\bigl(E^{(n)}\bigr)$ in terms of linear algebra by Lemma~\ref{lem:local-solv-p} after a translation of languages.
One need the fact that 
\[\bigl([p_i^*,-n]_{p_j}\bigr)_{i,j}=\bfA^\rmT+\bfD_{-1}.\qedhere\]
\end{proof}

If $e_3>0$ and $e_1<0$, write
\[\begin{split}
d_1&=p_1^{x_1}\leg{-1}{p_1}^{z_1}\cdots p_k^{x_k}\leg{-1}{p_1}^{z_k}\cdot \wt d_1,\\
d_2&=p_1^{y_1}\cdots p_k^{y_k}\cdot \wt d_2,\\
d_3&=(p_1^*)^{z_1}\cdots (p_k^*)^{z_k}\cdot \wt d_3.
\end{split}\]
Then $\wt d_1\wt d_2\wt d_3\in\BQ^{\times2}$.
Similar to Theorem~\ref{thm:selmer-even1}, we have:

\begin{theorem}\label{thm:selmer-even2}
Let $n$ be an odd positive square-free integer coprime with $e_1e_2e_3$, whose prime factors are quadratic residues modulo each odd prime factor of $e_1e_2e_3$.
Assume that 
\begin{itemize}
\item both $E$ and $E^{(n)}$ have no rational point of order $4$;
\item $e_3>0$ and $e_1<0$;
\item $\leg{p^*}q=1$ for any odd primes $p\mid n, q\mid e_1e_3$.
\end{itemize}
If $\Sel_2(E/\BQ)\cong(\BZ/2\BZ)^2$, then the map $(d_1,d_2,d_3)\mapsto \svec{\bfy}{\bfz}$ induces an isomorphism
\[\Sel_2'\bigl(E^{(n)}\bigr)\simto \Ker \begin{pmatrix}
\bfA+\bfD_{-e_1}&\bfD_{-e_1e_3}\\
\bfD_{-e_1e_2}&\bfA^\rmT+\bfD_{-e_1}
\end{pmatrix},\]
where $d_i\mid n,d_2>0,d_3\equiv1\bmod4$.
\end{theorem}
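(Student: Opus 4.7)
The plan is to follow the proof of Theorem~\ref{thm:selmer-even1} with the roles of $(e_1,e_2)$ and $(d_1,d_2)$ interchanged. The sign hypothesis $e_3>0,\,e_1<0$ replaces $e_2>0,\,e_3<0$, and by Lemma~\ref{lem:local-solv-R} this swaps the archimedean constraint from $d_1>0$ to $d_2>0$. Accordingly, in the altered parametrization $d_2$ plays the role of the ``free'' positive coordinate, while $d_1$ now absorbs the $\leg{-1}{p_i}^{z_i}$ signs that $d_2$ carried in the other theorem. The preliminary identity $p_i^*\equiv p_i\leg{-1}{p_i}\bmod\BQ^{\times 2}$ together with $d_1d_2d_3\in\BQ^{\times 2}$ gives $\wt d_1\wt d_2\wt d_3\in\BQ^{\times 2}$, so $\wt\Lambda=(\wt d_1,\wt d_2,\wt d_3)$ is a legitimate candidate for the Selmer space of the untwisted curve.

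Next I would verify the local reduction $D_\Lambda^{(n)}(\BQ_v)\neq\emptyset$ iff $D_{\wt\Lambda}^{(1)}(\BQ_v)\neq\emptyset$ at each place $v$ not dividing $n$. At $v=\infty$ this amounts to $\sgn(d_2)=\sgn(\wt d_2)$, which is automatic since $\prod p_i^{y_i}>0$. At odd primes $q\mid e_1e_3$, the hypothesis $\leg{p^*}{q}=1$ combined with $\leg{p_i}{q}=1$ forces $d_1/\wt d_1$, $d_3/\wt d_3$, and $n$ to be squares in $\BQ_q^\times$, so Lemma~\ref{lem:local-solv-q} transfers solvability. At odd $q\mid e_2$ only $d_2/\wt d_2=\prod p_i^{y_i}$ needs to be tracked, and this is a square modulo $q$ by the main $n$-hypothesis. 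At $v=2$, Lemma~\ref{lem:local-solv-even} confirms $d_3\equiv 1\bmod 4$ (built into our normalization), and Lemma~\ref{lem:local-solv2-even} then promotes solvability at all odd places to solvability at $2$.

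Since $\Sel_2(E/\BQ)\cong(\BZ/2\BZ)^2$ coincides with the image of $E(\BQ)[2]$, the tuple $\wt\Lambda$ must be one of the four $2$-torsion triples $(1,1,1)$, $(-e_3,-e_1e_3,e_1)$, $(-e_2e_3,e_3,-e_2)$, or $(e_2,-e_1,-e_1e_2)$. Of these, only $\wt\Lambda=(1,1,1)$ is compatible with the normalization $d_i\mid n$, $d_2>0$, $d_3\equiv 1\bmod 4$, so each class in $\Sel_2'(E^{(n)})$ has a unique representative in the stated form. Finally I would transcribe the Lemma~\ref{lem:local-solv-p} conditions at each $p_j\mid n$ into an $\BF_2$-linear system in $\svec{\bfy}{\bfz}$. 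The key Hilbert-symbol identity $\bigl([p_i^*,-n]_{p_j}\bigr)_{i,j}=\bfA^\rmT+\bfD_{-1}$ (used in the proof of Theorem~\ref{thm:selmer-even1}) explains the appearance of $\bfA^\rmT$ in the $(2,2)$-block paired with $\bfz$, while the diagonal corrections $\bfD_{-e_1}$ and the off-diagonal $\bfD_{-e_1e_2}$, $\bfD_{-e_1e_3}$ record the contributions of the square-free coefficients dividing $e_1e_2e_3$ and of the twist $n$. The main obstacle I foresee is the careful bookkeeping of these Hilbert-symbol contributions at each $p_j$, so that the three alternatives in Lemma~\ref{lem:local-solv-p} (one for each pattern of $p_j\mid d_i$) assemble into exactly the stated block matrix; once the indexing is arranged correctly, each individual entry is a routine Jacobi-symbol computation.
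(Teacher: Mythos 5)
Your proposal is correct and matches the paper's intent exactly: the paper gives no separate proof of Theorem~\ref{thm:selmer-even2}, stating only that it is ``similar to Theorem~\ref{thm:selmer-even1}'' after introducing the modified parametrization in which $d_1$ absorbs the $\leg{-1}{p_i}^{z_i}$ factors and $d_2=\prod p_i^{y_i}\cdot\wt d_2$ carries the archimedean constraint $d_2>0$ forced by Lemma~\ref{lem:local-solv-R} under $e_3>0$, $e_1<0$. Your place-by-place reduction (using $\leg{p^*}{q}=1$ for $q\mid e_1e_3$ to transfer solvability via Lemma~\ref{lem:local-solv-q}, Lemmas~\ref{lem:local-solv-even}--\ref{lem:local-solv2-even} at $v=2$, the identification $\wt\Lambda=(1,1,1)$, and the identity $\bigl([p_i^*,-n]_{p_j}\bigr)_{i,j}=\bfA^\rmT+\bfD_{-1}$) is precisely the intended argument.
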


\subsection{The Cassels pairing}

Let $(a,b,c)$ be a primitive triple of odd integers satisfying
\[e_1a^2+e_2b^2+e_3c^2=0.\]
Denote by $\CE=\mse_{e_1a^2,e_2b^2}$ and $\CE^{(n)}=\mse_{e_1a^2n,e_2b^2n}$.

\begin{theorem}\label{thm:sha-even}
Let $n$ be an odd positive square-free integer coprime with $e_1e_2e_3abc$, whose prime factors are quadratic residues modulo each odd prime factor of $e_1e_2e_3abc$.
Assume that
\begin{itemize}
\item both $E$ and $E^{(n)}$ have no rational point of order $4$;
\item if $e_2>0$ and $e_3<0$, then $q\equiv1\bmod4$ for any odd prime $q\mid e_2e_3bc$;
\item if $e_3>0$ and $e_1<0$, then $q\equiv1\bmod4$ for any odd prime $q\mid e_1e_3ac$;
\item if $e_1>0$ and $e_2<0$, then $p\equiv1\bmod4$ for any odd prime $p\mid n$.
\end{itemize}
If $\Sel_2(E/\BQ)\cong\Sel_2(\CE/\BQ)\cong(\BZ/2\BZ)^2$, then the following are equivalent:
\begin{enumerate}
\item $\rank_\BZ E^{(n)}(\BQ)=0$ and $\Sha\bigl(E^{(n)}/\BQ\bigr)[2^\infty]\cong(\BZ/2\BZ)^{2t}$;
\item $\rank_\BZ \CE^{(n)}(\BQ)=0$ and $\Sha\bigl(\CE^{(n)}/\BQ\bigr)[2^\infty]\cong(\BZ/2\BZ)^{2t}$.
\end{enumerate}
\end{theorem}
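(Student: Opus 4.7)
The plan is to mirror the strategy of Theorems~\ref{thm:sha-general} and~\ref{thm:sha-odd}: first establish a canonical isomorphism $\Sel_2'\bigl(E^{(n)}\bigr)\cong\Sel_2'\bigl(\CE^{(n)}\bigr)$ by appealing to the Selmer group descriptions in Theorems~\ref{thm:selmer-even1} and~\ref{thm:selmer-even2} (together with the symmetric version for the case $e_1>0, e_2<0$, obtained by cyclic permutation of indices), observing that the matrices controlling these kernels depend only on $e_1,e_2,e_3,n$ and not on $a,b,c$. As in the earlier theorems, I would first verify that both $\CE$ and $\CE^{(n)}$ have no rational point of order $4$, using Lemma~\ref{lem:ono} applied to the pairs $(-e_1a^2,e_2b^2)$, $(-e_2b^2,e_3c^2)$, $(-e_3c^2,e_1a^2)$, so that Lemma~\ref{lem:non-deg} applies to both curves.

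Next, after choosing signs so that $a\equiv b\equiv c\equiv1\bmod4$, I would compare the local Cassels pairings $\pair{\Lambda,\Lambda'}_{E^{(n)},v}$ and $\pair{\Lambda,\Lambda'}_{\CE^{(n)},v}$ place by place, using the same choices of $\CQ_i=(\alpha_i,a\beta_i,a\gamma_i)$, $\CQ_i=(\alpha_i,b\beta_i,b\gamma_i)$, $\CQ_i=(\alpha_i,c\beta_i,c\gamma_i)$ and linear forms $\CL_i$ as in the proof of Theorem~\ref{thm:sha-general}. For $v\mid 2e_1e_2e_3abc$, representatives $\Lambda,\Lambda'$ are supported in $n$, so $d_i'$ is a square in $\BQ_v$ whenever $v\mid e_1e_2e_3abc$ is odd (by the quadratic residue hypothesis on prime factors of $n$), forcing $[\CL_i(\CP_v),d_i']_v=[L_i(P_v),d_i']_v=0$. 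For odd $v=p\mid n$, I would reuse the case analysis in (ii-a)--(ii-c) of the proof of Theorem~\ref{thm:sha-general}: identify $\CP_p$ with $P_p$ scaled by $a,b,c$, factor out the $[a,\cdot]_p$, $[b,\cdot]_p$, $[c,\cdot]_p$ contributions (which vanish since $p$ is a QR mod each odd prime factor of $abc$ and $a,b,c\equiv1\bmod4$), and apply Lemma~\ref{lem:fund-equality} together with Lemma~\ref{lem:special-residue} to handle the residual cross term.

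The main obstacle is the place $v=2$. Unlike the odd case with $2\mmid e_3$, here all three $e_i$ are even, so the homogeneous space already has a $2$ in its coefficients and the $2$-adic analysis of Lemma~\ref{lem:local-solv-even} forces $d_3\equiv1\bmod 4$ and splits into the sub-cases $(d_1,d_2,d_3)\equiv(1,1,1)$ or $(-1,-1,1)\bmod 4$. In each sub-case I would construct $\CP_2$ and $P_2$ by the recipes in Lemma~\ref{lem:local-solv2-even}, using radicals of the form $\CU=\sqrt{(d_3+2f_2nb^{-2}\cdot\text{adjustment})d_1}$ and similarly $\CV$, and track that $\CU\equiv U\bmod{16}$ and $\CV\equiv V\bmod{16}$ via $\CU^2\equiv U^2\bmod{32}$. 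This gives $\CL_i(\CP_2)\equiv u_i L_i(P_2)\bmod 8$ for $u_i\in\{a,b,c,\text{product}\}$. The key positivity/congruence hypotheses on $e_i$ and the prime factors of $n$ or $e_ie_jbc$ modulo $4$ are precisely what is needed to ensure that $[a,d_i']_2$, $[b,d_i']_2$, $[c,d_i']_2$ and the residual $[2(a+b)(b+c)(c+a),d_i']_2$ all vanish at $2$ (via Lemma~\ref{lem:special-residue}, since $d_i'\mid n$ and $n$ has all prime factors $\equiv1\bmod4$ in the critical case, or the odd part of $e_2e_3bc$, resp.\ $e_1e_3ac$, is $\equiv1\bmod4$). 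Once local equality of Cassels pairings is established at every place, summing gives $\pair{\Lambda,\Lambda'}_{E^{(n)}}=\pair{\Lambda,\Lambda'}_{\CE^{(n)}}$ under the canonical identification, and Lemma~\ref{lem:non-deg} completes the proof.
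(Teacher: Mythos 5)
Your overall architecture matches the paper's proof (no order-$4$ points via Lemma~\ref{lem:ono}, canonical identification of pure Selmer groups via Theorem~\ref{thm:selmer-even1}/\ref{thm:selmer-even2}, place-by-place comparison of local Cassels pairings using the same $\CQ_i,\CL_i$ and Lemmas~\ref{lem:fund-equality}, \ref{lem:special-residue}, \ref{lem:non-deg}). But there is a genuine gap in your step (i). In the even case the normalized representatives only satisfy $d_i\mid n$, $d_1>0$, $d_3\equiv1\bmod4$ (Theorem~\ref{thm:selmer-even1}); the entries $d_2'$ and $d_3'$ may well be \emph{negative}, and then your claim that ``$d_i'$ is a square in $\BQ_v$ for odd $v\mid e_1e_2e_3abc$'' fails: $|d_i'|$ is a square modulo $q$ by the QR hypothesis, but $-1$ need not be, and the comparison at $v=\infty$ is likewise affected. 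The paper closes this by first arranging $d_2'>0$, $d_3'>0$: if both $d_2<0$ and $d_2'<0$ one replaces $\Lambda'$ by $\Lambda+\Lambda'$, and if $d_2>0$, $d_2'<0$ one swaps $\Lambda$ and $\Lambda'$, using $\pair{\Lambda,\Lambda'}=\pair{\Lambda,\Lambda+\Lambda'}=\pair{\Lambda',\Lambda}$. Without this normalization the local computation at the odd places dividing $e_1e_2e_3abc$ (and at $\infty$) does not go through as you state it.

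Two further, smaller points. First, you misattribute the role of the mod-$4$ hypotheses on $e_2e_3bc$, $e_1e_3ac$, and on the prime factors of $n$: in the paper these are used to verify the condition $\leg{p^*}{q}=1$ needed to apply Theorem~\ref{thm:selmer-even1} (resp.\ \ref{thm:selmer-even2}) to \emph{both} $E^{(n)}$ and $\CE^{(n)}$ and to control the signs of $\wt d_i$, not to kill Hilbert symbols $[a,d_i']_2$ at the place $2$; at $v=2$ the paper shows directly that $\CL_i(\CP_2)\equiv L_i(P_2)\bmod8$ (the factors $a,b,c$ disappear because $a\equiv b\equiv c\equiv1\bmod4$ and the relevant terms carry a factor $2f_in$), with the weaker congruences $\CU\equiv U\bmod 8$ coming from $\CU^2\equiv U^2\bmod{16}$, not mod $32$/mod $16$ as you wrote. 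Second, the case $e_1>0$, $e_2<0$ cannot be reduced to the others by cyclic permutation of indices, since the normalization $2\mmid e_1$, $2\mmid e_2$, $4\mid e_3$ is not cyclically symmetric; the paper instead handles it through the ``all $p_i\equiv1\bmod4$'' alternative of Theorem~\ref{thm:selmer-even1}, which is exactly why the theorem imposes $p\equiv1\bmod4$ for all $p\mid n$ in that case.
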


\begin{proof}
Similar to the proof of Theorem~\ref{thm:sha-general}, 
both $E^{(n)}$ and $\CE(\BQ)^{(n)}$ have no rational point of order $4$.
By choosing suitable signs, we may assume that $a\equiv b\equiv c\equiv1\bmod4$.

Assume that $e_2>0$ and $e_3<0$, or all prime factors of $n$ are congruent to $1$ modulo $4$.
Since the matrix in Theorem~\ref{thm:selmer-even1} does not depend on $a,b,c$, we have a canonical isomorphism
\[\Sel_2'\bigl(E^{(n)}\bigr)\cong\Sel_2'\bigl(\CE^{(n)}\bigr).\]
Let $\Lambda=(d_1,d_2,d_3),\Lambda'=(d_1',d_2',d_3')\in\Sel_2'\bigl(\CE^{(n)}\bigr)$ with $d_i,d_i'\mid n, d_1,d_1'>0,d_3\equiv d_3'\equiv1\bmod4$.
If $d_2<0$ and $d_2'<0$, we replace $\Lambda'$ by $\Lambda+\Lambda'$.
If $d_2>0$ and $d_2'<0$, we switch $\Lambda$ and $\Lambda'$.
Since
\[\pair{\Lambda,\Lambda'}=\pair{\Lambda,\Lambda+\Lambda'}=\pair{\Lambda',\Lambda},\]
these operations do not change $\pair{\Lambda,\Lambda'}$. 
Hence we may assume that $d_2'>0$ and $d_3'>0$.
When $e_3>0$ and $e_1<0$, we may assume that $d_1'>0$ and $d_3'>0$ similarly.

We will denote by $\CD,\CH,\CQ,\CL,\CP$ the corresponding symbols for $\CE$ and $D,H,Q,L,P$ the corresponding symbols for $E$ in the calculation of Cassels pairing.
Recall that $\CD_\Lambda^{(n)}$ is defined as
\[\begin{cases}
	\CH_1:& 2f_1a^2nt^2+d_2u_2^2-d_3u_3^2=0, \\
	\CH_2:& 2f_2b^2nt^2+d_3u_3^2-d_1u_1^2=0, \\
	\CH_3:& 2f_3c^2nt^2+d_1u_1^2-d_2u_2^2=0.
\end{cases}\]
Let $(\alpha_i,\beta_i,\gamma_i)$ be primitive triples of integers satisfying
\[\begin{split}
2f_1n\alpha_1^2+d_2\beta_1^2-d_3\gamma_1^2&=0, \\
2f_2n\alpha_2^2+d_3\beta_2^2-d_1\gamma_2^2&=0, \\
2f_3n\alpha_3^2+d_1\beta_3^2-d_2\gamma_3^2&=0.
\end{split}\]
Choose
\[\begin{aligned}
\CQ_1&=(\alpha_1,a\beta_1,a\gamma_1)\in \CH_1(\BQ),&
\CL_1&=2f_1an\alpha_1t+d_2\beta_1u_2-d_3\gamma_1u_3,\\
\CQ_2&=(\alpha_2,b\beta_2,b\gamma_2)\in \CH_2(\BQ),&
\CL_2&=2f_2bn\alpha_2t+d_3\beta_2u_3-d_1\gamma_2u_1,\\
\CQ_3&=(\alpha_3,c\beta_3,c\gamma_3)\in \CH_3(\BQ),&
\CL_3&=2f_3cn\alpha_3t+d_1\beta_3u_1-d_2\gamma_3u_2.
\end{aligned}\]

(i) The case odd $v=q\mid e_1e_2e_3abc$.
Since $\leg pq=1$ for any prime factor $p$ of $n$, $d_i'>0$ is a square modulo $q$.
Therefore, $[\CL_i(\CP_q),d_i']_q=0=[L_i(P_q),d_i']_q$.

(ii) The case $v=p\mid n$.
The proof is similar to the proof of Theorem~\ref{thm:sha-general}.

(iii) The case $v=2$. Note that $d_3\equiv1\bmod4$.

(iii-a) The case $(d_1,d_2,d_3)\equiv(1,1,1)\bmod 4$.
As shown in Lemma~\ref{lem:local-solv2-even}, we have $d_1\equiv d_2\equiv d_3\equiv 1\bmod8$, 
take $\CP_2=(0,1/\sqrt{d_1},1/\sqrt{d_2},1/\sqrt{d_3})=P_2$.
Then
\[\begin{split}
\CL_1(\CP_2)&=\beta_1\sqrt{d_2}-\gamma_1\sqrt{d_3}=L_1(P_2),\\
\CL_2(\CP_2)&=\beta_2\sqrt{d_3}-\gamma_2\sqrt{d_1}=L_2(P_2),\\
\CL_3(\CP_2)&=\beta_3\sqrt{d_1}-\gamma_3\sqrt{d_2}=L_3(P_2).
\end{split}\]

(iii-b) The case $(d_1,d_2,d_3)\equiv(-1,-1,1)\bmod 4$.
As shown in Lemma~\ref{lem:local-solv2-even}, we have $(d_3+2f_2b^2n)d_1\equiv (d_3-2f_1a^2n)d_2\equiv 1\bmod 8$.
Denote by 
\[\begin{aligned}
\CU&=\sqrt{(d_3+2f_2b^2n)d_1}, &\CV&=\sqrt{(d_3-2f_1a^2n)d_2},\\
U&=\sqrt{(d_3+2f_2n)d_1}, &V&=\sqrt{(d_3-2f_1n)d_2} 
\end{aligned}\]
with $\CU\equiv \CV\equiv U\equiv V\equiv 1\bmod 4$.
Since $\CU^2\equiv U^2\bmod{16}$, we have $\CU\equiv U\bmod 8$.
Similarly, $\CV\equiv V\bmod8$.

Take $\CP_2=(1,\CU/d_1,\CV/d_2,1)$, then $P_2=(1,U/d_1,V/d_2,1)$.
Note that all $\beta_i,\gamma_i$ are odd.
By choosing suitable signs of $\gamma_i$, we may assume that $2\mmid \CL_i(\CP_2)$.
Since
\[\begin{split}
\CL_1(\CP_2)&\equiv2f_1an\alpha_1+\beta_1V-d_3\gamma_1\equiv L_1(P_2),\\
\CL_2(\CP_2)&\equiv2f_2bn\alpha_2+d_3\beta_2-\gamma_2U\equiv L_2(P_2),\\
\CL_3(\CP_2)&\equiv2f_3cn\alpha_3+\beta_3U-\gamma_3V\equiv L_3(P_2)
\end{split}\]
modulo $8$, we have
\[[\CL_i(\CP_2),d_i']_2=[L_i(P_2),d_i']_2.\]

The rest part is similar to the proof of Theorem~\ref{thm:sha-general}.
\end{proof}

\section{Congruent elliptic curves}

Assume that $n=p_1\cdots p_k\equiv1\bmod 4$.
Denote by
\[h_{2^s}(n)=\dim_{\BF_2}\frac{2^{s-1}\Cl\bigl(\BQ(\sqrt{-n})\bigr)}{2^s\Cl\bigl(\BQ(\sqrt{-n})\bigr)}\]
the $2^s$-rank of the class group of $\BQ(\sqrt{-n})$.
By Gauss genus theory and R\'edei's work in \cite{Redei1934}, we can characterize $h_2(n)$ and $h_4(n)$.
See \cite[\S~3]{Wang2016} for more details.

\begin{proposition}\label{pro:4rank}
We have $h_2(n)=k$ and $h_4(n)=k-\rank(\bfA,\bfD_2{\bf1})$.
\end{proposition}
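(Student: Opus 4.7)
The plan is to treat the two formulas separately. The first formula $h_2(n)=k$ should follow immediately from Gauss genus theory: since $n\equiv1\bmod 4$, one has $-n\equiv 3\bmod 4$, so the discriminant of $\BQ(\sqrt{-n})$ is $-4n$, whose distinct prime divisors are exactly $2,p_1,\dots,p_k$. The genus group then has order $2^{(k+1)-1}=2^k$, giving $h_2(n)=k$.

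For the second formula I would invoke R\'edei's theorem~\cite{Redei1934} (in the form reviewed in \cite[\S3]{Wang2016}): $h_4(n)=h_2(n)-\rank_{\BF_2}R$ for an explicit R\'edei matrix $R$ whose rows and columns are indexed by the prime discriminants $d_0=-4,d_1=p_1^*,\dots,d_k=p_k^*$ of $-4n$, with entries the additive Hilbert symbols measuring the obstruction to lifting each genus character to an order-$4$ character. The Hilbert product formula $\sum_v[\alpha,\beta]_v=0$ forces every row of $R$ to sum to zero, so deleting the row indexed by $d_0$ preserves the rank and leaves a $k\times(k+1)$ matrix.

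The main step is then to identify this reduced matrix, column by column, with $(\bfA,\bfD_2{\bf1})$. For distinct $i,j\in\{1,\dots,k\}$ the congruence $p_j^*\equiv p_j$ in $\BQ_{p_i}^\times/\BQ_{p_i}^{\times2}$ gives the entry $[p_j,-n]_{p_i}=\aleg{p_j}{p_i}=\bfA_{ij}$; on the diagonal, writing $-n=p_j\cdot(-n/p_j)$ and applying the tame symbol identity at $p_j$ shows $[p_j,-n]_{p_j}=\sum_{\ell\ne j}\aleg{p_\ell}{p_j}$, exactly the value forced on $\bfA_{jj}$ by the row-sum-zero relation. For the column indexed by $d_0=-4$ the $p_i$-entry is $[-4,p_i^*]_{p_i}=\aleg{-1}{p_i}$, and after subtracting the appropriate $\BF_2$-combination of the other columns needed to absorb the sign twists $p_j^*=(-1)^{(p_j-1)/2}p_j$, this column becomes $\bigl(\aleg{2}{p_i}\bigr)_i=\bfD_2{\bf1}$, using $n\equiv 1\bmod 4$.

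I expect the main obstacle to be this last identification: keeping track of the $p_j^*$-versus-$p_j$ sign corrections together with the $2$-adic manipulation $\aleg{-1}{p_i}\leadsto\aleg{2}{p_i}$ carefully enough to verify that the rank-preserving column operations turn the $d_0$-column of $R$ into precisely $\bfD_2{\bf1}$. Granting that bookkeeping, $\rank R=\rank(\bfA,\bfD_2{\bf1})$ and the formula for $h_4(n)$ follows at once.
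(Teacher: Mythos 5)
Your first formula and your overall strategy (genus theory for $h_2$, the R\'edei matrix for $h_4$) are exactly what the paper intends --- it gives no proof beyond the citation to R\'edei and to \cite[\S3]{Wang2016} --- but your reduction of the R\'edei matrix $R$ to $(\bfA,\bfD_2{\bf1})$ goes wrong at the prime $2$, and the error is fatal rather than cosmetic. First, the product formula gives that each \emph{row} of $R$ sums to zero (equivalently, the sum of all \emph{columns} vanishes), which licenses deleting a column, not a row; the column sums of $R$ are in fact $\aleg{2}{p_j}$, generically nonzero, so the row of $R$ indexed by the prime $2$ is not in the span of the others and deleting it changes the rank. Second, that row is precisely where the data $\bfD_2{\bf1}$ lives: its entries are $\leg{p_j^*}{2}=\leg{2}{p_j}$, because the R\'edei condition at $2$ is the splitting condition $d\equiv 1\bmod 8$, strictly stronger than the Hilbert-symbol condition $[d,-4n]_2=0$ you are implicitly using. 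Once you discard that row, no column operations can recover $\aleg{2}{p_i}$ from the $d_0$-column $\bigl(\aleg{-1}{p_i}\bigr)_i$: for $n=p\equiv 5\bmod 8$, say $n=5$, your reduced matrix is the $1\times 2$ zero matrix and predicts $h_4(5)=1$, while $\Cl\bigl(\BQ(\sqrt{-5})\bigr)\cong\BZ/2\BZ$ has $h_4(5)=0$, in agreement with $\rank(\bfA,\bfD_2{\bf1})=\rank(0,1)=1$.

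The correct bookkeeping is the mirror image of yours. By reciprocity for prime discriminants, $\aleg{p_j^*}{p_i}=\aleg{p_i}{p_j}$, so the $(k+1)\times(k+1)$ matrix $R$ (rows indexed by $2,p_1,\dots,p_k$, columns by $-4,p_1^*,\dots,p_k^*$) has lower-right $k\times k$ block equal to $\bfA^\rmT$ (the diagonal entries $\aleg{-4n/p_i^*}{p_i}=\aleg{n/p_i}{p_i}=\bfA_{ii}$ do match), while the $2$-row restricted to the last $k$ columns is $(\bfD_2{\bf1})^\rmT$. Since every row of $R$ sums to zero, you may delete the \emph{column} indexed by $-4$ without changing the rank; transposing the resulting $(k+1)\times k$ matrix yields exactly $(\bfA,\bfD_2{\bf1})$, whence $h_4(n)=k-\rank R=k-\rank(\bfA,\bfD_2{\bf1})$. (Your off-diagonal identification $p_j^*\equiv p_j$ in $\BQ_{p_i}^\times/\BQ_{p_i}^{\times2}$ also fails when $p_i\equiv p_j\equiv 3\bmod 4$; it is reciprocity, not that congruence, that makes the square block equal to $\bfA^\rmT$, which is harmless only because it sits transposed against the correct extra column.)
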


Denote by
\[E=\mse_{1,1}: y^2=x(x-1)(x+1)\]
the congruent elliptic curve and $E^{(n)}=\mse_{n,n}$.
Let $(a,b,c)$ be a primitive triple of positive integers satisfying $a^2+b^2=2c^2$.
Then $a,b,c$ are odd.
Denote by $\CE=\mse_{a^2,b^2},\CE^{(n)}=\mse_{a^n,b^2n}$.

\begin{theorem}[{\cite[Theorem~4.4]{WangZhang2022}}]\label{thm:cong-odd-2}
Let $n\equiv 1\bmod8$ be an positive square-free integer coprime with $abc$, where each prime factor of $n$ is a quadratic residue modulo every odd prime factor of $abc$.
Assume that 
\begin{itemize}
\item $p\equiv1\bmod 4$ for all primes $p\mid n$;
\item $\Sel_2(\CE/\BQ)\cong(\BZ/2\BZ)^2$.
\end{itemize}
Then the following are equivalent:
\begin{enumerate}
\item $\rank_\BZ \CE^{(n)}(\BQ)=0$ and $\Sha\bigl(\CE^{(n)}/\BQ\bigr)[2^\infty]\cong(\BZ/2\BZ)^2$;
\item $h_4(n)=1$ and $h_8(n)\equiv\frac{d-1}4\bmod2$.
\end{enumerate}
Here $d\neq 1,n$ is a positive factor of $n$ such that $(d,-n)_v=1,\forall v$, or $(2d,-n)_v=1,\forall v$.
\end{theorem}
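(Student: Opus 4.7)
The plan is to reduce the statement for $\CE^{(n)}$ to the analogous statement for the quadratic twist $E^{(n)}$ of the classical congruent elliptic curve via Theorem~\ref{thm:sha-odd}, and then to invoke Theorem~4.4 of \cite{WangZhang2022} (or reconstruct it via the $2$-descent machinery of the present paper) to translate the condition on $E^{(n)}$ into the class-group statement involving $h_4(n)$ and $h_8(n)$.

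First I would verify the hypotheses of Theorem~\ref{thm:sha-odd} for $E=\mse_{1,1}$, which has $(e_1,e_2,e_3)=(1,1,-2)$: $e_1,e_2$ are odd and $2\mmid e_3$; the primitive triple $(a,b,c)$ of odd positive integers with $a^2+b^2=2c^2$ satisfies $e_1a^2+e_2b^2+e_3c^2=0$; the quadratic-residue condition on primes of $n$ reduces to residues modulo odd prime factors of $abc$ (since $e_1e_2e_3=-2$ contributes no odd prime factor), which is assumed; and $\Sel_2(E/\BQ)\cong(\BZ/2\BZ)^2$ is classical for $y^2=x^3-x$, while $\Sel_2(\CE/\BQ)\cong(\BZ/2\BZ)^2$ is among the hypotheses. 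Applying Theorem~\ref{thm:sha-odd} with $t=1$, condition~(1) is then equivalent to the same condition with $\CE^{(n)}$ replaced by $E^{(n)}$.

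Second, I would identify this latter condition with condition~(2) by combining Lemma~\ref{lem:non-deg}, Theorem~\ref{thm:selmer-odd}, and Proposition~\ref{pro:4rank}. By Lemma~\ref{lem:non-deg}, one needs $\dim_{\BF_2}\Sel_2'(E^{(n)})=2$ together with non-degeneracy of the Cassels pairing. By Theorem~\ref{thm:selmer-odd} and the Remark following it, using $\bfD_{-1}=\bfO$ (since each $p_i\equiv 1\bmod 4$), $\Sel_2'(E^{(n)})$ is identified with the kernel of an explicit block matrix in $\bfA$ and $\bfD_2$; a short manipulation combined with Proposition~\ref{pro:4rank} (and the fact that $n\equiv 1\bmod 8$ forces $\bfD_2{\bf1}$ into the column span of $\bfA$ appropriately) should reduce the dimension condition to $h_4(n)=1$. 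Under this, a nontrivial class $\Lambda\in\Sel_2'(E^{(n)})$ corresponds to a R\'edei factorisation $n=d\cdot(n/d)$ with the prescribed local conditions on $d$, and the explicit Cassels pairing on this generator, computed via Lemmas~\ref{lem:cassels}, \ref{lem:special-residue} and \ref{lem:fund-equality}, should yield
\[
\pair{\Lambda,\Lambda}_{E^{(n)}}\;\equiv\;h_8(n)+\frac{d-1}{4}\bmod 2,
\]
so non-degeneracy is equivalent to the stated congruence $h_8(n)\equiv (d-1)/4\bmod 2$.

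The principal obstacle is this last computation, and in particular the local Cassels pairing at $v=2$: one must choose good local points on $D_\Lambda^{(n)}$ and tangent planes to each $H_i$, then carefully evaluate the additive Hilbert symbols $[L_i(P_2),d_i]_2$ and match their sum, combined with the contributions at odd primes of $n$ (which assemble into a R\'edei symbol computing $h_8(n)\bmod 2$), against the governor $(d-1)/4$. This is essentially the $2$-adic analysis already illustrated in the proof of Theorem~\ref{thm:sha-odd}, so the most economical presentation is to cite \cite[Theorem~4.4]{WangZhang2022} after Step~1, which is the path indicated by the attribution in the statement.
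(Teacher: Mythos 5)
Your Step~1 is exactly the paper's argument: verify $\Sel_2(E/\BQ)\cong(\BZ/2\BZ)^2$ for $E=\mse_{1,1}$ and apply Theorem~\ref{thm:sha-odd} with $(e_1,e_2,e_3)=(1,1,-2)$ to transfer condition~(1) between $\CE^{(n)}$ and $E^{(n)}$; that reduction is the entire new content of the proof. Where you diverge is in closing the argument. The paper finishes by citing \cite[Theorem~1.1]{Wang2016}, which is precisely the criterion ``$\rank_\BZ E^{(n)}(\BQ)=0$ and $\Sha(E^{(n)}/\BQ)[2^\infty]\cong(\BZ/2\BZ)^2$ iff $h_4(n)=1$ and $h_8(n)\equiv\frac{d-1}{4}\bmod 2$'' for the congruent elliptic curve $E^{(n)}$ itself. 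Your suggestion to instead cite \cite[Theorem~4.4]{WangZhang2022} after Step~1 is the one genuinely wrong turn: that theorem \emph{is} the statement being proved (the bracketed attribution in the theorem header records this identity, not a proof route), so invoking it would be circular and would also make the reduction of Step~1 pointless. Your alternative of re-deriving the base case from Lemma~\ref{lem:non-deg}, Theorem~\ref{thm:selmer-odd} and Proposition~\ref{pro:4rank} is plausible in outline for the $h_4(n)=1$ part, but the identification of the Cassels self-pairing with $h_8(n)+\frac{d-1}{4}\bmod 2$ rests on R\'edei-symbol computations carried out in \cite{Wang2016} and not reproduced in this paper, so you should not present that as a step that ``should'' follow from the lemmas at hand --- cite \cite[Theorem~1.1]{Wang2016} for it, as the paper does.
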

\begin{proof}
Since $\Sel_2(E/\BQ)\cong(\BZ/2\BZ)^2$, this result follows from Theorem~\ref{thm:sha-odd} and \cite[Theorem~1.1]{Wang2016} directly.
\end{proof}

\begin{theorem}\label{thm:cong-even-2}
Let $n\equiv 1\bmod 8$ be a positive square-free integer coprime with $abc$, where each prime factor of $n$ is a quadratic residue modulo every prime factor of $abc$.
Assume that
\begin{itemize}
\item either $n$ or $a$ or $b$ has no prime factor $\equiv 3\bmod4$;
\item $p\equiv\pm1\bmod 8$ for all primes $p\mid n$;
\item $\Sel_2\bigl(\CE^{(2)}/\BQ\bigr)\cong(\BZ/2\BZ)^2$.
\end{itemize}
Then the following are equivalent:
\begin{enumerate}
\item $\rank_\BZ \CE^{(2n)}(\BQ)=0$ and $\Sha\bigl(\CE^{(2n)}/\BQ\bigr)[2^\infty]\cong(\BZ/2\BZ)^2$;
\item $h_4(n)=1$ and $d\equiv 9\bmod 16$.
\end{enumerate}
Here, $d$ is the unique divisor of $n$ such that $d\neq1,d\equiv1\bmod4$ and $(d,n)_v=1,\forall v$.
\end{theorem}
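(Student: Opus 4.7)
The plan is to follow the strategy of Theorem~\ref{thm:cong-odd-2}: apply the even-case comparison result Theorem~\ref{thm:sha-even} to reduce the arithmetic of $\CE^{(2n)}=\mse_{2a^2n,2b^2n}$ to that of the $(2n)$-th congruent elliptic curve $E^{(2n)}=\mse_{2n,2n}$, and then invoke a class-group criterion for the latter. I take $E^{(2)}=\mse_{2,2}$ as the base curve with triple $(e_1,e_2,e_3)=(2,2,-4)$, so that $2\mmid e_1$, $2\mmid e_2$, $4\mid e_3$ and the constraint $e_1a^2+e_2b^2+e_3c^2=0$ is precisely $a^2+b^2=2c^2$. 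With this setup the $\CE$ and $\CE^{(n)}$ of Theorem~\ref{thm:sha-even} become $\CE^{(2)}=\mse_{2a^2,2b^2}$ and $\CE^{(2n)}=\mse_{2a^2n,2b^2n}$. The hypotheses of Theorem~\ref{thm:sha-even} to check are: no rational $4$-torsion on $E^{(2)}$ or $E^{(2n)}$ (clear from Lemma~\ref{lem:ono}, as each candidate pair has a negative entry); $\Sel_2(E^{(2)}/\BQ)\cong(\BZ/2\BZ)^2$ (classical, since $2$ is not a congruent number and $\Sha(E^{(2)}/\BQ)[2]=0$); and $\Sel_2(\CE^{(2)}/\BQ)\cong(\BZ/2\BZ)^2$ (hypothesis).

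\textbf{Matching the sign/prime condition.} Since $e_2>0$, $e_3<0$ while the other two sign patterns of Theorem~\ref{thm:sha-even} fail, only the first bullet is triggered: we need every $p\mid n$ to be $\equiv 1\bmod 4$, or every odd prime of $e_2e_3bc=-8bc$ to be $\equiv 1\bmod 4$. Every odd prime dividing $c$ is $\equiv 1\bmod 4$ (it must split in $\BZ[i]$, since $a^2+b^2=2c^2$), so combined with $p\equiv\pm1\bmod 8$ the assumed condition on $n,a,b$ yields this bullet whenever either ``$n$'' or ``$b$'' has no prime factor $\equiv 3\bmod 4$. When only ``$a$'' has no such prime, I apply Theorem~\ref{thm:sha-even} to the swapped triple $(b,a,c)$, producing the analogous equivalence for $\mse_{2b^2n,2a^2n}=(\CE^{(2n)})^{(-1)}$. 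Under the present hypotheses, the Selmer matrix of Theorem~\ref{thm:selmer-even1} does not depend on the ordering of $(a,b)$, and the local Cassels pairings at $v=2$ in the proof of Theorem~\ref{thm:sha-even} are symmetric in the roles of $a$ and $b$, so statement~(1) for $\CE^{(2n)}$ is equivalent to the corresponding statement for $(\CE^{(2n)})^{(-1)}$. Therefore in all three cases we obtain the equivalence of~(1) with
\[\rank_\BZ E^{(2n)}(\BQ)=0\ \text{and}\ \Sha\bigl(E^{(2n)}/\BQ\bigr)[2^\infty]\cong(\BZ/2\BZ)^2.\]

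\textbf{Class-group translation.} It remains to equate this condition on $E^{(2n)}$ with ``$h_4(n)=1$ and $d\equiv 9\bmod 16$''. By Theorem~\ref{thm:selmer-even1}, $\Sel_2'(E^{(2n)})$ is the kernel of a matrix whose $\BF_2$-rank, by Proposition~\ref{pro:4rank}, equals $k-h_4(n)$; thus $\dim_{\BF_2}\Sel_2'(E^{(2n)})=2h_4(n)$, and Lemma~\ref{lem:non-deg} forces $h_4(n)=1$. The Cassels pairing on the resulting $2$-dimensional space has a single off-diagonal entry, and the case-(iii) analysis in the proof of Theorem~\ref{thm:sha-even} localises it at $v=2$; identifying it with a R\'edei symbol as in \cite{Redei1934} translates non-degeneracy into the congruence $d\equiv 9\bmod 16$. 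This step is the even analogue of \cite[Theorem~1.1]{Wang2016}.

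\textbf{Main obstacle.} The most delicate point is the $a\leftrightarrow b$ symmetry argument above: one must verify that statement~(1) is invariant under replacing $\CE^{(2n)}$ by its quadratic twist by $-1$, equivalently that the Cassels pairing calculation in case~(iii-b) of the proof of Theorem~\ref{thm:sha-even} uses $a$ and $b$ symmetrically once the sign choices of $\alpha_i,\beta_i,\gamma_i$ are accounted for. A secondary obstacle is the explicit Cassels-to-R\'edei computation at $v=2$ producing $d\equiv 9\bmod 16$; although parallel in spirit to the odd case of \cite[Theorem~1.1]{Wang2016}, it requires a separate calculation reflecting the $2$-adic local structure of the even twist.
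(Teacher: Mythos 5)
Your overall architecture (reduce to $a=b=c=1$ via Theorem~\ref{thm:sha-even}, then settle the curve $\mse_{2n,2n}$ by a class-group computation) is the paper's, and your hypothesis-checking for the cases where $n$ or $b$ has no prime factor $\equiv 3\bmod 4$ is sound. But the two points you flag as ``obstacles'' are genuine gaps, and the paper resolves both differently. First, the remaining case where only $a$ has no prime factor $\equiv 3\bmod4$: you pass to $\mse_{2b^2n,2a^2n}$, correctly identify it as the quadratic twist of $\CE^{(2n)}$ by $-1$, and then assert that statement (1) is invariant under this twist because the Selmer matrix and the local pairings at $v=2$ are ``symmetric in $a$ and $b$.'' This cannot be repaired as stated: for $a\neq b$ the curve and its $(-1)$-twist are non-isomorphic over $\BQ$, and no symmetry of individual local computations identifies their Mordell--Weil ranks or Shafarevich--Tate groups. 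The paper never twists: it rewrites the \emph{same} curve $\CE^{(2)}=\mse_{2a^2,2b^2}$ as $\mse_{-2b^2,-2a^2}$ (the roots $\set{0,\,2a^2,\,-2b^2}$ are unchanged) and invokes Theorem~\ref{thm:selmer-even2}, the variant built for the resulting sign pattern $e_3>0$, $e_1<0$, together with the corresponding case of Theorem~\ref{thm:sha-even}.

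Second, the endgame. There is no ``even analogue of \cite[Theorem~1.1]{Wang2016}'' to cite, and your sketch of it is wrong on a specific point: the Cassels pairing is \emph{not} localised at $v=2$. The paper first computes $\Sel_2'\bigl(\CE^{(2n)}\bigr)$ directly from Theorem~\ref{thm:selmer-even1} applied to $(2a^2,2b^2,-4c^2)$, obtaining the block matrix $\bfM$ with $\Ker\bfM\supseteq\set{\svec{\bf0}{\bfd},\svec{\bf1}{\bfd+\bf1}:\bfd\in\Ker\bfA^\rmT}$, whence $\dim=2\iff\rank\bfA=k-1\iff h_4(n)=1$ (your stronger claim $\dim=2h_4(n)$ is neither proved nor needed), and identifies the explicit generators $\Lambda=(n,1,n)$, $\Lambda'=(1,d,d)$. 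Only then does it use Theorem~\ref{thm:sha-even} to reduce the pairing to $a=b=c=1$, where with tangent lines $L_2=u_1-u_3$, $L_3=2t+u_1$ the local term at $v=2$ is $[2,d]_2=0$; the whole pairing comes from $v\mid n\infty$ and equals $\aleg{2+\sqrt2}{|d|}$, which Lemma~\ref{lem:special-symbol} evaluates as $\tfrac{d-1}{8}\bmod 2$. That explicit computation, not a R\'edei-symbol identification at $2$, is the source of the condition $d\equiv 9\bmod{16}$, and it is the part your proposal leaves unproved.
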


\begin{proof}
For any prime $q\mid c$, we have $a^2\equiv-b^2\bmod q$.
Therefore $q\equiv1\bmod 4$ and $\leg{p^*}q=\leg pq=1$.
If $n$ or $b$ has no prime factor $\equiv 3\bmod4$, then $\leg{p^*}q=\leg pq=1$ for all primes $p\mid n, q\mid b$.
We apply Theorem~\ref{thm:selmer-even1} to $(e_1,e_2,e_3)=(2a^2,2b^2,-4c^2)$, the map $(d_1,d_2,d_3)\mapsto \svec{\bfx}{\bfz}$ induces an isomorphism
\[\Sel_2'\bigl(\CE^{(n)}\bigr)\simto \Ker \bfM 
\quad\text{where}\quad
\bfM=\begin{pmatrix}
\bfA+\bfD_2&\bfD_2\\
\bfD_{-1}&\bfA^\rmT+\bfD_2
\end{pmatrix}=\begin{pmatrix}
\bfA&\\
\bfD_{-1}&\bfA^\rmT
\end{pmatrix}\]
and $d_i\mid n, d_1>0, d_3\equiv 1\bmod4$.

One can show that
\[\Ker\bfM\supseteq\set{\svec{\bf0}{\bfd},
\svec{\bf1}{\bfd+\bf1}: \bfd\in\Ker\bfA^\rmT}.\]
Since $\bfA{\bf1}={\bf0}$, we have $\rank\bfA^\rmT=\rank\bfA\le k-1$ and then $\Ker\bfM$ has at least four vectors.
Hence
\[\dim_{\BF_2}\Sel_2'\bigl(\CE^{(n)}\bigr)=2\iff \rank\bfA=k-1\iff	h_4(n)=1\]
by Proposition~\ref{pro:4rank}.

Assume that $h_4(n)=1$.
Note that $(p_j,-n)_{p_i}=(p_i^*,n)_{p_j}$.
Therefore, $\bfA^\rmT\bfd=0$ if and only if $(d,n)_p=1$ for all $p\mid n$, where $d=(p_1^*)^{s_1}\cdots (p_k^*)^{s_k}$, $\bfd=(s_1,\dots,s_k)^\rmT$.
Hence $\Sel_2'\bigl(\CE^{(n)}\bigr)$ is generated by $\Lambda=(n,1,n)$ and $\Lambda'=(1,d,d)$.

By Theorem~\ref{thm:sha-even}, we may assume that $a=b=c=1$.
Recall that $D_\Lambda^{(n)}$ is defined as
\[\begin{cases}
H_1:& 2nt^2+u_2^2-nu_3^2=0,\\
H_2:& 2t^2+u_3^2-u_1^2=0,\\
H_3:& -4nt^2+nu_1^2-u_2^2=0.
\end{cases}\]
Choose 
\begin{align*}
Q_2&=(0,1,1)\in H_2(\BQ),& L_2&=u_1-u_3,\\
Q_3&=(1,0,-2)\in H_3(\BQ),& L_3&=2t+u_1.
\end{align*}
By Lemma~\ref{lem:cassels}, we have
\[\pair{\Lambda,\Lambda'}_{E^{(n)}}=\sum_{v\mid 2n\infty} \bigl[L_2L_3(P_v),d\bigr]_v\]
for any $P_v\in D_\Lambda^{(n)}(\BQ_v)$.

For $v\mid n\infty$, take $P_v=(1,2,0,-\sqrt2)$, then $L_2L_3(P_v)=4(2+\sqrt2)$ and $\pair{\Lambda,\Lambda'}_v=[2+\sqrt2,d]_v$.
For $v=2$, take $P_2=(0,1,\sqrt n,-1)$.
Then $L_2L_3(P_2)=2$ and $\pair{\Lambda,\Lambda'}_2=[2,d]_2=0$.
Hence $\pair{\Lambda,\Lambda'}_{E^{(n)}}=\aleg{2+\sqrt2}{|d|}\equiv \frac{d-1}8\bmod 2$ by Lemma~\ref{lem:special-symbol}.
Conclude the results by Lemma~\ref{lem:non-deg}.

If $a$ has no prime factor $\equiv 3\bmod4$, then $\leg{p^*}q=\leg pq=1$ for all primes $p\mid n, q\mid a$.
We apply Theorem~\ref{thm:selmer-even2} to $(e_1,e_2,e_3)=(-2b^2,-2a^2,4c^2)$.
Then we can prove the result similarly.
\end{proof}

\begin{lemma}\label{lem:special-symbol}
Let $m\equiv1\bmod8$ be a square-free integer with prime factors congruent to $\pm1$ modulo $8$. 
Then $m\equiv 1\bmod{16}$ if and only if $\leg{2+\sqrt2}{|m|}=1$.
\end{lemma}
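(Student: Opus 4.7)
The plan is to reduce the Jacobi symbol to a product of local Legendre symbols and then identify each local symbol with the splitting behaviour of $p$ in the maximal totally real subfield of $\BQ(\zeta_{16})$.

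First I would unwind the definition: by multiplicativity of the Hilbert symbol, and since $2+\sqrt2$ is a $p$-adic unit at every odd $p\mid m$,
\[\leg{2+\sqrt2}{|m|}=\prod_{p\mid m}(2+\sqrt2,\,p)_p=\prod_{p\mid m}\leg{2+\sqrt2}{p},\]
where the right-hand Legendre symbol denotes the image of $(2+\alpha)\bmod p$ in $\{\pm1\}$ for any square root $\alpha$ of $2$ in $\BF_p$ (such $\alpha$ exists since $p\equiv\pm1\bmod8$). The choice of $\alpha$ is irrelevant because $(2+\alpha)(2-\alpha)=2$ is a quadratic residue mod $p$, so $\leg{2+\alpha}{p}=\leg{2-\alpha}{p}$.

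The core of the argument is the local claim: for odd $p$ with $p\equiv\pm1\bmod 8$,
\[\leg{2+\sqrt2}{p}=1\iff p\equiv\pm1\bmod 16.\]
Here the key cyclotomic identity
\[(\zeta_{16}+\zeta_{16}^{-1})^2=\zeta_8+\zeta_8^{-1}+2=\sqrt2+2\]
shows that $\sqrt{2+\sqrt2}$ generates the maximal totally real subfield $K:=\BQ(\zeta_{16})^+$, which is a cyclic degree-$4$ extension of $\BQ$ with $\mathrm{Gal}(K/\BQ)\cong(\BZ/16\BZ)^\times/\{\pm1\}$. An odd prime $p$ is unramified in $K$, and splits completely iff its image in this quotient is trivial, iff $p\equiv\pm1\bmod16$. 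Since $p\equiv\pm1\bmod 8$ already ensures $\sqrt2\in\BF_p$, splitting completely in $K$ is equivalent to $\sqrt{2+\sqrt2}\in\BF_p$, i.e.\ $2+\sqrt2$ being a square in $\BF_p$.

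To conclude, the prime factors of $m$ lie in the Klein four-subgroup $H=\{1,7,9,15\}\subset(\BZ/16\BZ)^\times$. Let $\chi:H\to\{\pm1\}$ be the non-trivial character with kernel $\{1,15\}$. The local claim yields $\leg{2+\sqrt2}{p}=\chi(p)$, hence
\[\leg{2+\sqrt2}{|m|}=\prod_{p\mid m}\chi(p)=\chi(m).\]
The hypothesis $m\equiv1\bmod 8$ forces $m\bmod 16\in\{1,9\}$, where $\chi(1)=1$ and $\chi(9)=-1$. Therefore the symbol equals $1$ precisely when $m\equiv1\bmod 16$, as claimed.

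The only mildly subtle point is the cyclotomic identification of the local Legendre symbol with splitting in $K$; once one has the identity $2+\sqrt2=(\zeta_{16}+\zeta_{16}^{-1})^2$ and the symmetry coming from $\leg{2}{p}=1$, everything else reduces to unravelling symbols and bookkeeping modulo $16$.
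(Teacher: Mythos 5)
Your proof is correct, but it takes a genuinely different route from the paper's. You decompose the Jacobi symbol into local Legendre symbols and identify $\leg{2+\sqrt2}{p}$ with the complete splitting of $p$ in $\BQ(\zeta_{16})^+=\BQ\bigl(\sqrt{2+\sqrt2}\,\bigr)$ via the identity $(\zeta_{16}+\zeta_{16}^{-1})^2=2+\sqrt2$, so that the local symbol becomes the quadratic character on $\set{1,7,9,15}\subset(\BZ/16\BZ)^\times$ with kernel $\set{1,15}$; multiplicativity then finishes the argument (note that your step $\prod_{p\mid m}\chi(p)=\chi(m)$ for negative $m$ silently uses $\chi(-1)=\chi(15)=1$, which is true but worth a word). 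The paper instead writes $m=u^2-2w^2$, sets $\mu=u+w$ and $\lambda=u+2w$ so that $m=2\mu^2-\lambda^2$ and $\lambda\equiv(2\pm\sqrt2)w\bmod m$, and uses quadratic reciprocity to reduce the symbol to $\legbigg{2}{|\lambda|}$; the congruence $m+\lambda^2=2\mu^2\equiv2\bmod{16}$ then ties $m\bmod{16}$ to $\lambda\bmod 8$. Your approach is conceptually cleaner and avoids having to produce the representation $m=u^2-2w^2$ (which in the paper rests implicitly on $\BZ[\sqrt2]$ being a PID with a norm $-1$ unit), at the cost of invoking the splitting law in $\BQ(\zeta_{16})^+$; the paper's argument is entirely elementary once the representation is granted. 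Both establish the same local-to-global identity, so either proof is acceptable here.
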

\begin{proof}
Write $m=u^2-2w^2\equiv 1\bmod 8$.
Denote by $\mu=u+w$ and $\lambda=u+2w$.
Then $m=2\mu^2-\lambda^2$ and $u,\mu,\lambda$ are odd.
Let $w'$ be the positive odd part of $w$.
Then
\[\legbigg{w}{|m|}=\legbigg{m}{w'}=\legbigg{u^2-2w^2}{w'}=1, \]
\[\legbigg{\lambda}{|m|}=\legbigg{m}{|\lambda|}=\legbigg{2\mu^2-\lambda^2}{\lambda}=\legbigg{2}{|\lambda|}\]
and $\lambda=u+2w\equiv(2\pm\sqrt2)w\bmod m$.
Hence
\[\legbigg{2+\sqrt2}{|m|}=\legbigg{2}{|\lambda|}.\]
Since $m+\lambda^2=2\mu^2\equiv 2\bmod 16$, we have 
\[m\equiv1\bmod 16\iff \lambda\equiv \pm1\bmod8\iff\legbigg{2}{|\lambda|}=1\iff\legbigg{2+\sqrt2}{|m|}=1.\qedhere\]
\end{proof}

\section*{Acknowledgements}
The author would like to thank Zhangjie Wang for his proof of Lemma~\ref{lem:local-solv-q}.
The author is partially supported by National Natural Science Foundation of China (Grant No. 12001510).

%

\end{document}